\DeclarePairedDelimiter{\floor}{\lfloor}{\rfloor}
\newcommand{\shortdot}[1]{\raisebox{-0.4pt}{$\stackrel{\bullet}{#1}$}}
\newcommand{\updot}[1]{\raisebox{0.9pt}{$\stackrel{\bullet}{#1}$}} 
\DeclareBoldMathCommand\boldlangle{\left\langle} 
\DeclareBoldMathCommand\boldrangle{\right\rangle}
\theoremstyle{plain}
\newtheorem{theorem}{Theorem}[section]
\newtheorem{lemma}[theorem]{Lemma}
\newtheorem{proposition}[theorem]{Proposition}
\theoremstyle{definition}
\theoremstyle{remark}
\begin{document}

\title{The Impact of Smartphone Apps on Bike Sharing Systems}
\author{ 
  Shuang Tao \\ School of Operations Research and Information Engineering \\ Cornell University
\\ 293 Rhodes Hall, Ithaca, NY 14853 \\  st754@cornell.edu  \\ 
\and
  Jamol Pender \\ School of Operations Research and Information Engineering \\ Cornell University
\\ 228 Rhodes Hall, Ithaca, NY 14853 \\  jjp274@cornell.edu  \\ 
 }

\maketitle
\begin{abstract}
Bike-sharing systems are exploding in cities around the world as more people are adopting sustainable transportation solutions for their everyday commutes.  However, as more people use the system, riders often encounter that bikes or docks might not be available when they arrive to a station.  As a result, many systems like CitiBike and Divvy provide riders with information about the network via smartphone apps so that riders can find stations with available bikes.  However, not all customers have adopted the use of these smartphone apps for their station selection.  By combining customer choice modeling and finite capacity queueing models, we explore the impact of the smartphone app technology to increase throughput and reduce blocking in bike sharing systems.  To this end, we prove a mean-field limit and a central limit theorem for an empirical process of the number of stations with $k$ bikes.  We also prove limit theorems for a new process called the ratio process, which characterizes the proportion of stations whose bike availability ratio lies within a particular partition of the interval [0,1].  For the mean field limit, we prove that the equilibrium exists, is unique, and that the stationary distribution of the empirical measure converges to a Dirac mass at the same equilibrium, thus showing an interchange of limits result ($\lim_{t\rightarrow \infty}\lim_{N\rightarrow \infty}=\lim_{N\rightarrow \infty}\lim_{t\rightarrow \infty}$).  Our limit theorems provide insight on the mean, variance, and sample path dynamics of large scale bike-sharing systems.  Our results illustrate that if we increase the proportion of customers that use smartphone app information, the entropy of the bike sharing network is reduced, and riders experience less blocking in the network.

\end{abstract}

%

\section{Introduction}

Bike sharing is expanding rapidly around the world.  Around 1000 cities worldwide have implemented bike sharing programs and the majority of them are achieving great success.  Bike sharing programs aim to provide affordable short-term trips for people living in urban areas as an alternative to other public transportation or private vehicles. They have the benefits of reducing traffic congestion, noise, and mitigating air pollution. Studies also report observing an increase in cycling and health benefits where bike sharing systems are run.

Bike sharing has developed significantly since its initial launch in Amsterdam in 1965, and it is generally considered that bike sharing has gone through three generations of major changes in the implementation and design of the system. The contemporary 3rd generation schemes, which are also called information technology (IT) based systems, exploited information and communication technology to effectively automate the bike sharing system. This includes automated smartcards (or magnetic stripe card), electronic bike locking, electronic payment, GPS (Global Positioning System) tracking of bikes, and networked self-service stations managed by central computer systems and technologies like Radio Frequency Identification (RFID) to monitor the location of bikes in the system.  Many systems also introduced the use of websites or smartphone apps to provide users with real-time information about available bikes and open docks at nearby stations.

The biggest issue for all bike sharing systems is the scarcity of resources to move all riders around each city at all times of the day. That is, customers can encounter empty stations when picking up bikes, and they can also encounter full stations when dropping off bikes. In both situations, customers get blocked and therefore system throughput is reduced and customers are frustrated and might leave the system altogether. Some empirical and analytical studies of bike availability at CitiBike stations show that during peak hours, over 30\% of the stations are empty and 8\% of the stations are full \citet{faghih2015empirical, freund2017minimizing, ghosh2017dynamic, hampshire2012analysis, kabra2016bike, jian2016simulation, tao2017stochastic}. The pressing need to come up with economic yet effective solutions to this issue is there. 

The most direct way is real-time rebalancing of bikes across stations. Examples of papers that analyze real-time rebalancing strategies are \citet{erdougan2015exact, contardo2012balancing, freund2017minimizing, ghosh2017dynamic, nair2014equilibrium, o2015smarter, o2015data, schuijbroek2017inventory, raviv2013static, faghih2017empirical, singla2015incentivizing, freund2020data}.  However, in practice, real-time rebalancing is both expensive and time-consuming, and it has not been adapted by most large scale bike sharing systems. The main reason is that trucks need to pick up bikes from nearly full stations and deliver them to empty stations during high demand times. Since peak hours in a workday only last about 1-2 hours, real-time rebalancing may not be able to solve unbalanced inventory in bike sharing systems.  Another solution is to implement real-time dynamic pricing, see for example \citet{fricker2016incentives, kabra2016bike, pfrommer2014dynamic, waserhole2013pricing, waserhole2016pricing, chemla2013self}.  However, given the majority of the users of CitiBike are subscribers who pay an annual fee to get unlimited rides, real-time pricing will not make a big difference since it only affects non-subscribers.

With the advances in technology, many large scale bike sharing systems such as CitiBike now give their users the ability to see real-time bike/dock availability at nearby stations through a smartphone app and web API.  A picture of the CitiBike app is given in Figure \ref{Fig_screenshot}.   This information helps users make better decisions such as where to pick up bikes from nearby locations. This means that when stations are empty or have very few bikes, users who have this information through smartphone app will be less likely to pick up bikes at these stations. Similarly, when stations are full or nearly full, users who have the information are more likely to pick up bikes at these stations. Intuitively, providing users real-time information about bike/dock availability should have the power of guiding users to move bikes from stations with more bikes to stations with less bikes and therefore making the system more balanced.  However, it is not known how the app itself affects the underlying network dynamics since this additional information can change the behavior of the users.  

\begin{figure}[H]
\centering
\includegraphics[scale=0.12]{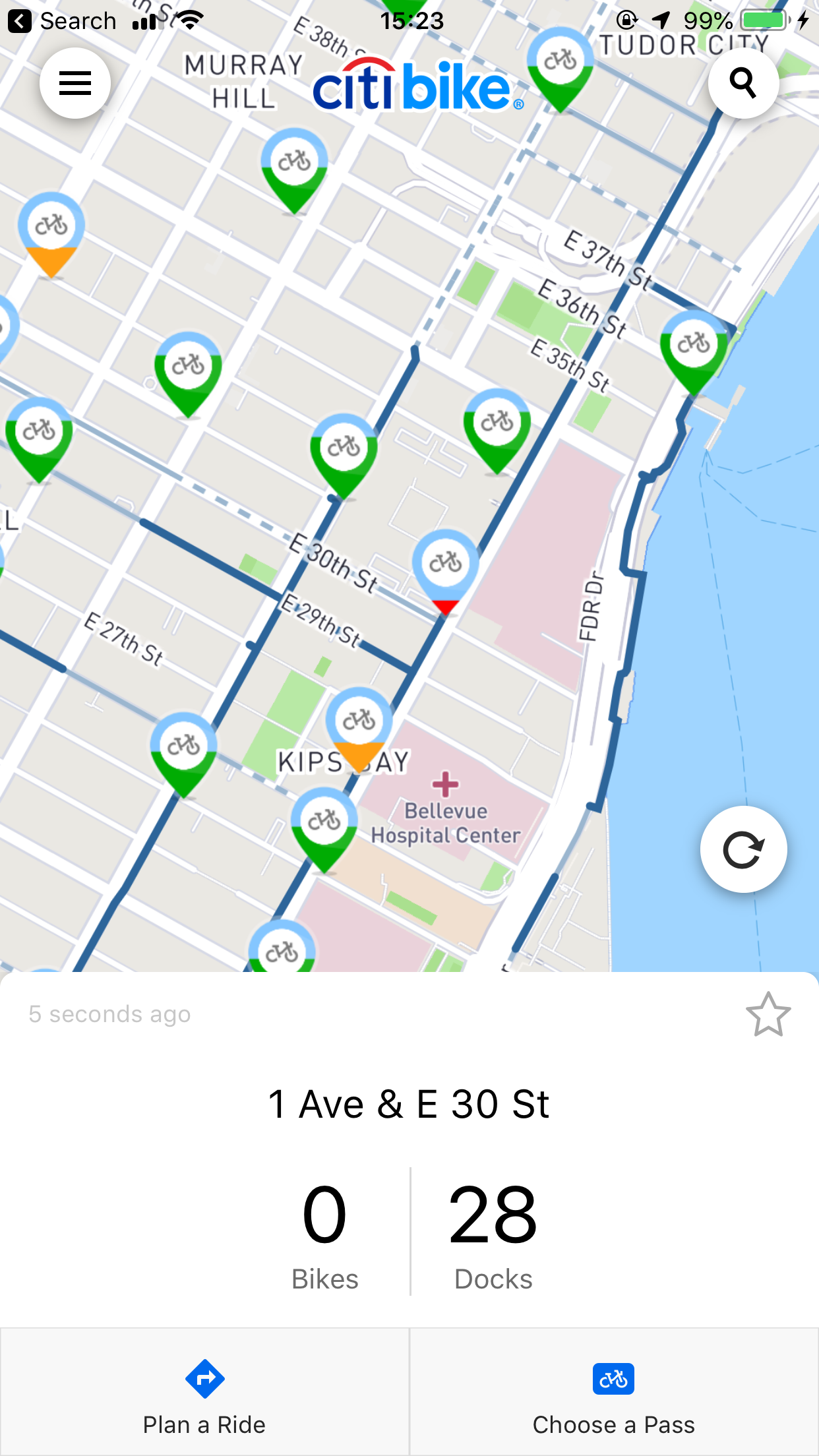}~
\includegraphics[scale=0.12]{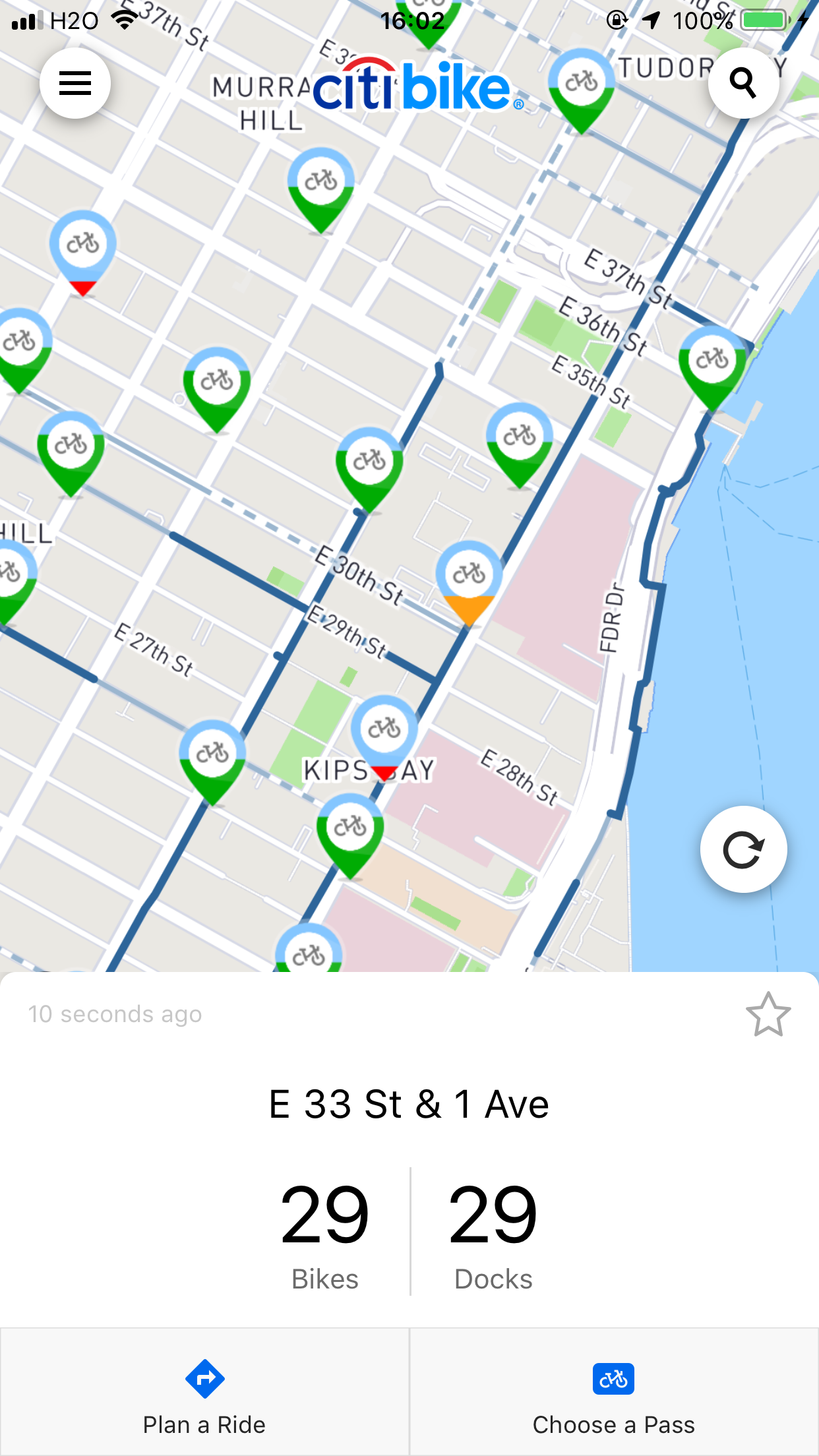}
\caption{CitiBike smartphone app screenshots}\label{Fig_screenshot}
\end{figure}

In this paper, we study the influence of smartphone app and its power to guide users to non-empty stations. Inspired by the empirical work of \citet{kabra2016bike, nirenberg2018impact} and the theoretical work of \citet{pender2017queues,
pender2018analysis, novitzky2019nonlinear, novitzky2019limiting}, our approach combines customer choice models and finite capacity queueing models.  However, our work is more analytical in our approach since we prove a mean-field and central limit theorems for the empirical process of the proportion of stations with $k$ bikes. These limit theorems allow us to study the impact of information by analyzing the asymptotic behavior of the mean, variance and sample path dynamics of large scale bike sharing systems under different choice models and different parameters, such as the proportion of customers that use the smartphone app information, customers' sensitivity to information, etc.   Moreover, we also develop a mean-field and central limit theorems for the bike availability ratio process, which counts the proportion of stations whose bike availability falls into an interval $[i/n,(i+1)/n)$. This ratio process is especially useful when station capacities are not uniform and identical. For example, in the case when we have two stations with capacity equal to 5 and 70 respectively. Having 3 bikes at station 1 means it already achieves over half of the capacity, while at station 2 it clearly means too few bikes. In such case the ratio process will provide more accurate information about the condition of a station than the empirical process.  Finally, for both the empirical process and the ratio process, we study the system in steady state and show that an equilibrium exists and is unique.  We also show that the steady state empirical process and ratio process converge to a Dirac mass at the equilibrium point, thus proving an interchanging of limits result.


 \subsection{Main Contributions of Paper}

The contributions of this work can be summarized as follows:    
\begin{itemize}
\item We prove a transient mean field and central limit theorem for the empirical process.   
\item We construct a new process called the ratio process and derive a mean field and central limit theorem for the ratio process and compute its steady state equilibrium.    
\item We prove an interchange of limits for the mean field limit empirical process and the ratio process.  We show that the limit is a Dirac mass.  
\item We show that our stochastic choice queueing model can replicate empirical process and ratio process given by the data of the CitiBike network.
\end{itemize} 


\subsection{Organization of Paper}

The remainder of this paper is organized as follows. Section \ref{Sec_Bike_Model} introduces our bike sharing model with customer choice.  We also prove a mean field and central limit theorem for the empirical process for the proportion of stations with $k$ bikes.  In Section \ref{Sec_Ratio},  we also prove a mean field and central limit theorem for the empirical process for the proportion of stations  whose bike availability falls into a interval $[i/n,(i+1)/n)$.  In Section \ref{Sec_SteadyState}, we compute explicitly the equilibrium and Lyapunov functions of the mean field limit of the empirical process and the ratio process.  Finally, we show that both the empirical process and the ratio process satisfy an interchange of limits.   In Section \ref{Sec_simulation}, we discuss the simulation results of our model, in both stationary and non-stationary arrival processes settings.  These results incorporates  real-life data from CitiBike, and provide insights into how different aspects of the impact of information affect the behavior of empirical measure over time.  In Section \ref{conc}, we give concluding remarks and provide some future directions of research. Finally, in Section \ref{App} the Appendix, additional proofs and theorems are given.


\section{Stochastic Model and Limit Theorems} \label{Sec_Bike_Model}

In this section, we construct a Markovian bike-sharing queueing model where customers can pick-up and drop-off bikes at each station if there is available capacity.

\subsection{Bike Sharing Model with Customer Choice Dynamics}
Motivated by the CitiBike system, we consider a bike sharing system with $N$ stations and a fleet of $M$ bikes in total. We assume that the arrival of customers to the stations are independent Poisson processes with rate $\lambda_i$ for the  $i^{th}$ station.  When a customer arrives at a station, if there is no bike available, he will then leave the system and is immediately blocked and lost. Otherwise, he takes a bike and rides to station $j$ with probability $P_j$.  We assume that the travel time of the rider is exponentially distributed with mean $1/\mu$, for every transition from one station to another.  Since we are concerned with finite capacity stations, we assume that the $i^{th}$ station has a bike capacity of $K_i$, which is assumed to be finite for all stations.   Thus, when a customer arrives at station $j$, if there are less than $K_j$ bikes in this station, he returns his bike and leaves the system. If there are exactly $K_j$ bikes (i.e. the station is full), the customer randomly chooses another station $k$ with probability $P_k$ and goes to that station to drop the bike off.  As before, it takes a time that is exponentially distributed with mean $1/\mu$. Finally, the customer rides like this again until he can return his bike to a station that is not full. 

Below we provide Table \ref{table:nonlin} for the reader's convenience to so that they understand the notation that we will use throughout the rest of the paper.  

\begin{table}[ht]
\caption{Notation} 
\centering 
\begin{tabular}{c c } 
\hline\hline 

\hline 
$N$ & Number of stations \\ 
$M$ & Number of bikes \\
$K_{i}$ & Capacity at station $i$\\
$\lambda_i$ & Arrival rate at station $i$\\
$1/\mu$ & Mean travel time\\
$P_i$ & Routing probability to station $i$\\
$X_i(t)$ & Number of bikes at station $i$ at time $t$\\
$\gamma$ & Average number of bikes at each station\\
$p$ & The fraction of users that have information about bike distribution \\
$g$ & The choice function\\
$\theta$ & Users' sensitivity to information (exponential choice function)\\
$c$ & Users' sensitivity threshold to information (minimum choice function)\\
$\alpha$ & Users' sensitivity to information (polynomial choice function)\\
\hline 
\end{tabular}
\label{table:nonlin} 
\end{table}

Throughout the rest of the paper, we assume without loss of generality that the arrival rate $\lambda_i=\lambda$ and service rate $\mu=1$, and that the routing probability from each station is uniform i.e. $P_i=1/N$. We also assume that capacities across all stations are equal, i.e. $K_i=K, i=1,...,N$. We refer to \citet{tao2017stochastic} on how our model can be extended seamlessly to settings with non-uniform arrival rate, routing probabilities and capacities. With our notation in hand, we are ready to develop our stochastic model for our bike-sharing network.

 Let $\mathbf{X}(t)=(X_{1}(t),\cdots, X_{N}(t))$, where $X_{i}(t)$ is the number of bikes at station $i$ at time $t$, then $X_{i}(t)$ is a continuous time Markov chain (CTMC), in particular a state dependent $M/M/1/K_{i}$ queue. In this model, the rate of dropping off bikes at station $i$ is equal to $\mu P_{i}\left(M-\sum_{k=1}^{N}X_{k}(t)\right)\mathbf{1}\{X_{i}(t)<K_{i}\}$, and the rate of retrieving bikes at station $i$ is equal to $\left((1-p)\lambda+p\lambda_{i}N\frac{g(X_i(t))}{\sum_{j=1}^{N}g(X_{j}(t))}\right)\mathbf{1}\{X_{i}(t)>0\}$, where $g$ is an arbitrary non-decreasing function that is also non-negative.
 
 \begin{proposition}\label{functional_forward_choice}
 For any continuous  function $f$, the CTMCs $\{X_i(t)\}_{i=1}^{N}$ satisfy the following functional forward equations,
 \begin{eqnarray}
& & \lefteqn{ \updot{\mathbb{E}}[f(X_i(t)) | X_i(0) = x_i]  \equiv  \frac{d}{dt} \mathbb{E}[f(X_i(t)) | X_i(0) = x_i]  } \nonumber \\ &=&  \mathbb{E}\left[ \left( f(X_i(t) +1) - f(X_i(t)) \right) \cdot \left(\mu P_{i}\left(M-\sum_{k=1}^{N}X_{k}(t)\right)\mathbf{1}\{X_{i}(t)<K_{i}\}\right) \right]  \nonumber\\& +&  \mathbb{E}\left[ \left( f(X_i(t) -1) - f(X_i(t)) \right) \cdot \left( \left((1-p)\lambda +p\lambda N\frac{g(X_i(t))}{\sum_{j=1}^{N}g(X_{j}(t))}\right)\mathbf{1}\{ X_{i}(t) > 0 \}  \right) \right]. \nonumber \\
 \end{eqnarray} 
 \end{proposition}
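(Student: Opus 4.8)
The plan is to recognize this identity as the Kolmogorov forward (Dynkin) equation associated with the infinitesimal generator of the joint Markov chain $\mathbf{X}(t)$, specialized to a function of a single coordinate. The structural observation that makes the argument clean is that each $X_i(t)$ is bounded above by $K_i$ and the total number of bikes is conserved at $M$, so $\mathbf{X}(t)$ evolves on a \emph{finite} state space with uniformly bounded transition rates; in particular there is no explosion, and any continuous $f$ restricted to the discrete state space is automatically bounded.

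First I would write the generator of $\mathbf{X}(t)$ applied to the coordinate map $\mathbf{x} \mapsto f(x_i)$. A jump affecting station $i$ either raises $X_i$ by one (a drop-off, at rate $\beta_i(\mathbf{X}) := \mu P_i(M - \sum_{k} X_k)\mathbf{1}\{X_i < K_i\}$) or lowers it by one (a pick-up, at rate $\delta_i(\mathbf{X}) := ((1-p)\lambda + p\lambda N g(X_i)/\sum_{j} g(X_j))\mathbf{1}\{X_i > 0\}$), while every other transition leaves $f(X_i)$ unchanged. Hence the generator applied to $f(X_i)$ equals
\[
(f(X_i+1) - f(X_i))\,\beta_i(\mathbf{X}) + (f(X_i-1) - f(X_i))\,\delta_i(\mathbf{X}).
\]
I would justify this from first principles by conditioning on $\mathbf{X}(t)$ and expanding $\mathbb{E}[f(X_i(t+h)) - f(X_i(t)) \mid \mathbf{X}(t)]$ over a short interval of length $h$: an up-jump has probability $\beta_i(\mathbf{X}(t))h + o(h)$, a down-jump $\delta_i(\mathbf{X}(t))h + o(h)$, two or more jumps have probability $o(h)$, and the no-jump event contributes nothing to the increment. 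Dividing by $h$, letting $h \to 0$, and taking expectations then gives $\frac{d}{dt}\mathbb{E}[f(X_i(t))]$ equal to the expectation of the displayed expression, which is precisely the stated equation once the rates are written out.

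The point I would treat as the main obstacle --- though it is mild here --- is conceptual rather than computational: $X_i(t)$ is \emph{not} itself a Markov chain, since its up- and down-rates depend on the other coordinates through $\sum_k X_k$ and $\sum_j g(X_j)$. Consequently the identity must be read as a statement about the joint chain: the conditioning $X_i(0) = x_i$ is shorthand for fixing the initial law of $\mathbf{X}(0)$, and every expectation on the right-hand side is taken under that joint law. The remaining technical step is to justify interchanging the $h \to 0$ limit (the differentiation) with the expectation and to confirm that $t \mapsto \mathbb{E}[f(X_i(t))]$ is indeed differentiable. Both follow immediately from the finiteness of the state space: the increment quotients are uniformly bounded so dominated convergence applies, and the transition function of a finite-state CTMC is smooth (indeed analytic) in $t$, so the derivative exists and is continuous.
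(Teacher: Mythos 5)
The paper states this proposition without proof, treating it as the standard Kolmogorov/Dynkin forward equation for the joint finite-state CTMC applied to the coordinate function $\mathbf{x}\mapsto f(x_i)$, which is exactly the argument you give; your proof is correct, and your observation that $X_i(t)$ alone is not Markov --- so the conditioning must be read through the joint chain $\mathbf{X}(t)$, with the finite state space supplying boundedness, non-explosion, and the justification for exchanging the $h\to 0$ limit with the expectation --- is precisely the right reading of the statement. No gaps to report.
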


\subsection{Empirical Bike Process}
Although the functional forward equations given in Proposition \ref{functional_forward_choice} describe the exact dynamics of the mean of the bike sharing system, the system of differential equations are not closed.  This non-closure property of the functional forward equations in this model arises from the fact that the bike sharing system has finite capacity.  More importantly, it also implies that we need to know a priori the full distribution of the whole stochastic process $\mathbf{X}(t)$ in order to calculate the mean or variance or any moment for that matter. Finally if we even wanted to solve these equations and knew the entire distribution of $\mathbf{X}(t)$ a priori, there are still $O(N^2)$ differential equations that would need to be numerically integrated. For some major bike sharing networks such as CitiBike in NYC, $N$ is around 900; for Divvy in Chicago, $N$ is around 600. This can be very computationally expensive and thus, we must take a  different approach to analyze our bike sharing system. 
 
Moreover, if we want to analyze the limiting behavior of $\{X_{i}\}_{i=1}^{N}$ as  CTMCs, as we let $N$ go to $\infty$, the mean field limit would become infinite dimensional, which is quite complicated.  However, if we instead analyze an empirical measure process for $\mathbf{X}(t)$, we can use the finite capacity nature of the bike sharing system to our advantage and have a finite dimensional CTMC for the empirical measure process.  See \citet{tao2017stochastic} for more discussion on this topic.

Thus, we construct an empirical measure process that counts the proportions of stations with $n$ bikes.  This empirical measure process is given below by the following equation: 
\begin{equation}
Y_{t}^{N}(n)=\frac{1}{N}\sum_{i=1}^{N}\mathbf{1} \{X_i(t)=n\}.
\end{equation}

Conditioning on $Y_t^{N}(n)=y(n)$, the transition rates of $y$ are specified as follows:
\begin{itemize}
\item When there is an customer arrival to a station with $n$ bikes and relative utilization $r$ to retrieve a bike, the
proportion of stations having $n$ bikes goes down by $1/N$, the proportion of stations having $n-1$ bikes goes up by $1/N$, and the transition rate $Q^N$ is 
\begin{eqnarray}
Q^{N}\left(y,y+\frac{1}{N}(\mathbf{1}_{n-1}-\mathbf{1}_{n}) \right) &=&  \left((1-p)\lambda Ny(n)+p\lambda N\frac{y(n)g(n)}{\sum_{j=0}^{K}y(j)g(j)}\right) \mathbf{1}_{n>0}.\nonumber\\
\end{eqnarray}
\item When there is a customer returning a bike to a station with $n$ bikes and relative utilization $r$, the proportion of stations having $n$ bikes goes down by $1/N$, the proportion of stations having $n+1$ bikes goes up by $1/N$, and the transition rate $Q^N$ is 
\begin{eqnarray}
Q^{N} \left(y,y+\frac{1}{N}(\mathbf{1}_{n+1}-\mathbf{1}_{n}) \right) &=&
 y(n) \cdot \mu \cdot \left(M-\sum_{n'}n'y(n')N\right)\mathbf{1}_{n<K}\nonumber  \\  &=&
 y(n)N\left(\frac{M}{N}-\sum_{n'}n'y(n')\right)\mathbf{1}_{n<K}.
\end{eqnarray}
\end{itemize}

Similarly, we have the functional forward equations for $Y^{N}(t)$ as follows. 
\begin{proposition}\label{functional_forward_eqn}
 For any continuous function $f$, $Y^N(t)$ satisfies the following functional forward equation,
 \begin{eqnarray}
 & &\lefteqn{ \updot{\mathbb{E}}[f(Y^{N}_{n}(t)) | Y^{N}_{n}(0) = y_{n}(0)]  \equiv  \frac{d}{dt} \mathbb{E}[f(Y^{N}_{n}(t)) | Y^{N}_{n}(0) = y_{n}(0)]  } \nonumber \\
  &=&  \mathbb{E}\left[ \left( f\left(Y^{N}_{n}(t) +1/N\right) - f(Y^{N}_{n}(t)) \right) \cdot \left(Y_{n+1}^{N}(t)N\lambda\left((1-p) +p\frac{g(n+1)}{\sum_{j=0}^{K}Y_{j}^{N}(t)g(j)}\right) \mathbf{1}_{n<K}\right.\right.\nonumber\\
  & & \left.\left.+Y^{N}_{n-1}(t)N\left(\frac{M}{N}-\sum_{n'}n'Y_{n'}^{N}(t)\right)\mathbf{1}_{n>0}\right) \right]  \nonumber\\
  & & +\mathbb{E}\left[ \left( f(Y^{N}_{n}(t) -1/N) - f(Y^{N}_{n}(t)) \right) \cdot \left( Y_{n}^{N}(t)N\lambda\left((1-p) +p\frac{g(n)}{\sum_{j=0}^{K}Y_{j}^{N}(t)g(j)}\right) \mathbf{1}_{n>0}\right.\right.\nonumber \\
  & &\left.\left.+Y^{N}_{n}(t)N\left(\frac{M}{N}-\sum_{n'}n'Y_{n'}^{N}(t)\right)\mathbf{1}_{n<K} \right) \right],
 \end{eqnarray} 
 for $n=0,1,...,K$.
 \end{proposition}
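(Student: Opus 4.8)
The plan is to recognize that, because the arrival rates, capacities, and routing are exchangeable across the stations, the empirical measure $Y^N(t)=(Y^N_0(t),\dots,Y^N_K(t))$ is itself a continuous-time Markov chain on the finite state space $\{y\in(\tfrac{1}{N}\mathbb Z)^{K+1}:\sum_n y(n)=1\}$, whose transition rates are exactly the rates $Q^N$ listed immediately before the proposition. Since the state space is finite, the chain is non-explosive and all the expectations below are finite, so the claimed identity is nothing but Dynkin's formula (equivalently, the Kolmogorov forward equation for expectations) applied to the generator $\mathcal A$ of $Y^N$ with the particular test function $F(y)=f(y(n))$ that depends only on the single coordinate $n$.

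First I would write the generator in jump form
\[
\mathcal A F(y)=\sum_{y'} Q^N(y,y')\,\bigl(F(y')-F(y)\bigr),
\]
and observe that for $F(y)=f(y(n))$ any transition leaving the $n$-th coordinate unchanged contributes $f(y(n))-f(y(n))=0$. Hence only four families of transitions survive: the two that raise $y(n)$ by $1/N$ --- a station moving $n+1\to n$ by a retrieval and a station moving $n-1\to n$ by a return --- and the two that lower $y(n)$ by $1/N$ --- a station moving $n\to n-1$ by a retrieval and $n\to n+1$ by a return. The rate of each is read off from the two $Q^N$ formulas, taking care to evaluate them at the occupancy of the \emph{source} state: the $n+1\to n$ retrieval carries the factor $Y^N_{n+1}$ and $g(n+1)$, whereas the $n\to n-1$ retrieval carries $Y^N_n$ and $g(n)$, and analogously for the two return transitions.

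Next I would group the two increment rates as a common multiplier of $f(y(n)+1/N)-f(y(n))$ and the two decrement rates as a multiplier of $f(y(n)-1/N)-f(y(n))$, then attach the boundary indicators dictated by requiring the two occupancy levels adjacent to $n$ to be admissible: every surviving transition whose other endpoint is level $n+1$ carries $\mathbf 1_{n<K}$ (since one needs $n+1\le K$), and every transition whose other endpoint is level $n-1$ carries $\mathbf 1_{n>0}$ (since one needs $n-1\ge 0$). Taking the conditional expectation $\mathbb E[\,\cdot\mid Y^N_n(0)=y_n(0)]$ and using $\tfrac{d}{dt}\mathbb E[F(Y^N(t))]=\mathbb E[\mathcal A F(Y^N(t))]$ then reproduces the stated equation term for term.

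The computation is essentially bookkeeping once the Markov property of $Y^N$ is granted; the only steps requiring genuine care are the re-indexing --- so that each of the four transitions is weighted by the occupancy of its source rather than of $n$ --- and the correct placement of the indicators $\mathbf 1_{n<K}$ and $\mathbf 1_{n>0}$, which is exactly where a naively symmetric guess would err. I would therefore double-check these by writing out the two boundary coordinates $Y^N_0$ and $Y^N_K$ explicitly.
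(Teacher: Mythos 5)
Your proposal is correct and matches the route the paper implicitly takes: the transition rates $Q^N$ listed just before the proposition define $Y^N$ as a finite-state CTMC, and the stated equation is exactly Dynkin's formula applied to the test function $F(y)=f(y(n))$, with your source-state weighting ($Y^N_{n+1}$ with $g(n+1)$, $Y^N_{n-1}$, etc.) and the indicator placement ($\mathbf{1}_{n<K}$ on transitions touching level $n+1$, $\mathbf{1}_{n>0}$ on those touching level $n-1$) agreeing term for term with the displayed identity. The paper omits this routine verification entirely, so your write-up in fact supplies the bookkeeping the authors left implicit.
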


However, we should mention that the functional forward equations in Proposition \ref{functional_forward_eqn} are also not closed. Therefore, we need to use another method for the analysis of this model to get around the non-closure properties of the equations, which leads us to the mean field limit and diffusion limit analysis in the following sections.

\subsection{Mean Field Limit of Empirical Process} \label{Fluid_Limit}

In this section, we prove the mean field limit for our bike sharing model.  A mean field limit describes the large station dynamics of the bike sharing network over time.  Deriving the mean field limit allows us to gain insights on average system behaviors, when the the demand for bikes and the number of stations are very large. Thus, we avoid the need to study an $N$-dimensional CTMC and compute its steady state distribution in this high dimensional setting.

\begin{theorem}[Functional Law of Large Numbers]\label{fluid_limit}
Let $|. |$ denote the Euclidean norm in $\mathbb{R}^{K+1}$. Suppose that $\lim_{N\rightarrow \infty}\frac{M}{N}=\gamma$ and $Y_{0}^{N}\xrightarrow{p} y_0$,  then we have for $\forall \epsilon>0$
$$\lim_{N\rightarrow \infty}P\left(\sup_{t\leq t_0}|Y_t^N-y_t|>\epsilon\right)=0,$$
where $y_t$ is the unique solution to the following differential equation starting at $y_0$
\begin{equation}\label{diff_eqn}
\shortdot{y}=b(y),
\end{equation}
where $b:[0,1]^{K+1}\rightarrow \mathbb{R}^{K+1}$ is a vector field satisfies
\begin{eqnarray}\label{eqn:b}
b(y)&=&\sum_{n=0}^{K}\left[ \left((1-p)+p\frac{g(n)}{\sum_{j=0}^{K}y_{j}g(j)}\right)(\mathbf{1}_{n-1}-\mathbf{1}_{n})\lambda\mathbf{1}_{n>0}\right.\nonumber\\
& & \left.+\left(\gamma-\sum_{j=0}^{K} jy_{j} \right)(\mathbf{1}_{n+1}-\mathbf{1}_{n})\mathbf{1}_{n<K}\right] y_n,
\end{eqnarray}
or 
\begin{equation}
b(y_0)=\underbrace{-\left(\gamma-\sum_{j=0}^{K} jy_{j} \right)y_0}_{\text{return a bike to a no-bike station}}+\underbrace{\left((1-p)+p\frac{g(1)}{\sum_{j=0}^{K}y_{j}g(j)}\right)\lambda y_{1}}_{\text{retrieve a bike from a 1-bike station}},
\end{equation}
\begin{equation}
\begin{split}
b(y_k)=& \underbrace{\left((1-p)+p\frac{g(k+1)}{\sum_{j=0}^{K}y_{j}g(j)}\right)\lambda y_{k+1}}_{\text{retrieve a bike from a $k+1$-bike station}}-\underbrace{\left(\left((1-p)+p\frac{g(k)}{\sum_{j=0}^{K}y_{j}g(j)}\right)\lambda+\gamma-\sum_{j=0}^{K} jy_{j} \right)y_{k}}_{\text{retrieve and return a bike to a $k$-bike station}}\\
&+\underbrace{\left(\gamma-\sum_{j=0}^{K} jy_{j} \right)y_{k-1}}_{\text{return a bike to a $k-1$-bike station}},
\end{split}
\end{equation}
for $ k=1,...,K-1$, and 
\begin{equation}
b(y_K)=\underbrace{-\left((1-p)+p\frac{g(K)}{\sum_{j=0}^{K}y_{j}g(j)}\right)\lambda y_{K}}_{\text{retrieve a bike from a $K$-bike station}}+\underbrace{\left(\gamma-\sum_{j=0}^{K} jy_{j} \right)y_{K-1}}_{\text{return a bike to a $K-1$-bike station}}.
\end{equation}
\end{theorem}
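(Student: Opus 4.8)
The plan is to recognize $Y^N$ as a density-dependent Markov chain in the sense of Kurtz and to establish convergence through a semimartingale decomposition followed by a Gronwall estimate. Reading off the two bullet points, the process makes up-jumps $\tfrac1N(\mathbf{1}_{n+1}-\mathbf{1}_n)$ at rate $N\,\beta_n^{+}(y)$ and down-jumps $\tfrac1N(\mathbf{1}_{n-1}-\mathbf{1}_n)$ at rate $N\,\beta_n^{-}(y)$, where $\beta_n^{\pm}$ collect the per-station retrieval and drop-off intensities appearing in Proposition \ref{functional_forward_eqn}. I would use the random-time-change (Poisson) representation to obtain
\[
Y_t^N = Y_0^N + \int_0^t F_N(Y_s^N)\,ds + M_t^N,
\]
where $F_N(y)$ is the scaled drift (rate times jump size, summed over all transitions) and $M_t^N$ is a martingale. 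Because each jump has size $O(1/N)$ while the total rate is $O(N)$, the drift $F_N$ is $O(1)$; a direct computation shows $F_N(y)\to b(y)$, the only $N$-dependence entering through the replacement of $M/N$ by its limit $\gamma$, so that $\sup_{y}|F_N(y)-b(y)|\le |M/N-\gamma|\to 0$ uniformly.

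The second step is to show the noise vanishes. The predictable quadratic variation of $M^N$ is the sum over transitions of (jump size)$^2\times$rate, which is $O(N)\cdot O(1/N^2)=O(1/N)$ uniformly in $y$, so $\mathbb{E}\big[\langle M^N\rangle_{t_0}\big]=O(1/N)$. By Doob's $L^2$ maximal inequality,
\[
\mathbb{E}\Big[\sup_{t\le t_0}|M_t^N|^2\Big]\le 4\,\mathbb{E}\big[\langle M^N\rangle_{t_0}\big]=O(1/N)\longrightarrow 0,
\]
and hence $\sup_{t\le t_0}|M_t^N|\to 0$ in probability.

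The third step is to close the estimate via Lipschitz continuity of $b$ and Gronwall's inequality. Since both $Y^N$ and the limit stay on the probability simplex $\{y:\sum_j y_j=1\}$ (the normalization $\sum_n Y_t^N(n)=1$ is preserved by every transition), it suffices to establish the Lipschitz bound there. The only term of $b$ that is not manifestly smooth is the ratio $g(n)/\sum_{j} y_j g(j)$; because $g$ is nonnegative and non-decreasing, on the simplex the denominator obeys $\sum_j y_j g(j)\ge g(0)\sum_j y_j=g(0)$, which is bounded away from zero for the choice functions we use (e.g. $g(0)=1$ for the exponential choice), so $b$ is Lipschitz with some constant $L$. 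Picard--Lindel\"of then gives existence and uniqueness of the solution $y_t$ to $\dot y=b(y)$. Writing $Y_t^N-y_t$ as the sum of $Y_0^N-y_0$, the martingale term, the drift error $\int_0^t(F_N-b)(Y_s^N)\,ds$, and $\int_0^t\big(b(Y_s^N)-b(y_s)\big)\,ds$, taking suprema and applying the Lipschitz bound yields
\[
\sup_{s\le t}|Y_s^N-y_s|\le \varepsilon_N + L\int_0^t \sup_{u\le s}|Y_u^N-y_u|\,ds,
\]
where $\varepsilon_N := |Y_0^N-y_0| + \sup_{s\le t_0}|M_s^N| + t_0\sup_y|F_N(y)-b(y)|\to 0$ in probability by the hypotheses $Y_0^N\xrightarrow{p}y_0$ and $M/N\to\gamma$ together with the previous two steps. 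Gronwall's inequality then gives $\sup_{t\le t_0}|Y_t^N-y_t|\le \varepsilon_N e^{Lt_0}$, and letting $N\to\infty$ proves the claim.

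I expect the main obstacle to be the Lipschitz estimate for the information term near the degenerate boundary of the simplex, where the denominator $\sum_j y_j g(j)$ approaches its lower bound; controlling this cleanly is what forces the restriction to choice functions with $g(0)>0$ (or a uniform lower bound on $g$), and it is also the step where one must verify that the drift convergence $F_N\to b$ is genuinely uniform rather than merely pointwise.
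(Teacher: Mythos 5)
Your proposal is correct and takes essentially the same route as the paper's own proof: a semimartingale decomposition of $Y^N_t$, a uniform drift-error bound of order $|M/N-\gamma|$, a Doob $L^2$ maximal-inequality argument showing $\mathbb{E}\bigl[\sup_{t\le t_0}|M^N_t|^2\bigr]=O(1/N)$, and a Lipschitz-plus-Gronwall closing estimate, matching the paper's Propositions \ref{bound}, \ref{Lipschitz}, and \ref{drift}. Your closing caveat is well observed: the paper's Proposition \ref{Lipschitz} deduces $C_{\min}>0$ from nonnegativity of $g$ alone, which silently requires $g(0)>0$ (failing, e.g., for $g(x)=\min(x,c)$ or $g(x)=x^{\alpha}$), so flagging that restriction is a genuine refinement, though it does not alter the method.
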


\begin{proof}
The complete proof is provided in the Appendix (Section \ref{App}).
\end{proof}

Our mean field limit shows that the analysis of our bike sharing system is essentially solving a system of ordinary differential equations when the number of stations $N$ goes to infinity. Using the fluid limit, we can estimate important performance measures such as proportion of empty or full stations, what factors (such as fleet size and capacity) are affecting these performance measures, and what we can do to improve the system performance. Deriving the mean field limit also allows us to compute the system's equilibrium point, which is addressed in Section \ref{Sec_SteadyState}. 

However, knowing the mean field limit is not enough, we would like to know more about the stochastic variability of the system, i.e. the fluctuations around the mean field limit, and what is stochastic sample path behavior of the system.  Thus, in the subsequent section we develop a functional central limit theorem for our bike sharing model, and explain why it is important for understanding stochastic fluctuations.


\subsection{Diffusion Limit of Empirical Process}\label{Diffusion_Limit}
In this section, we derive the diffusion limit of our stochastic empirical process bike sharing model.  Diffusion limits are critical for obtaining a deep understanding of the sample path behavior of stochastic processes.  One reason is that diffusion limits describe the fluctuations around the mean or mean field limit and can help understand the variance or the asymptotic distribution of the stochastic process being analyzed, and help provide confidence intervals to the mean field limit.  We define our diffusion scaled bike sharing model by subtracting the mean field limit from the scaled stochastic process and rescaling by $\sqrt{N}$.  Thus, we obtain the following expression for the diffusion scaled bike sharing empirical process
\begin{equation}
D_{t}^{N}=\sqrt{N}(Y_{t}^{N}-y_{t}) .
\end{equation}

Unlike many other ride-sharing systems such as Lyft or Uber, bike sharing does not have a pricing mechanism to redistribute bikes in real-time to satisfy demand.  For this reason, it is essential to understand the dynamics and behavior of $D_{t}^{N}$.  $D_{t}^{N}$ can be useful for describing the probability that the proportion of stations with $i$ bikes exceeds a threshold i.e. $\mathbb{P}(Y^N_t > x)$ for some $x \in [0,1]^{K+1}$.  It also describes this probability in a situation where there is no control or rebalancing of bikes in the system.  This knowledge of the uncontrolled system is especially important for newly-started bike sharing systems who are still in the process of gathering information about the system demand. Having the diffusion limit help managers of these bike sharing systems understand the system dynamics and  stability, which in terms help them make long term managerial decisions. It is also helpful in the case when the operators of the bike sharing system have no money for rebalancing the system to meet  real-time demand.

Using the semi-martingale decomposition of $Y^N_t$ given in Equation (\ref{semiY}), we can write a similar decomposition for $D_{t}^{N}$ as follows:
\begin{equation}
\begin{split}
D_{t}^{N}&=\sqrt{N}(Y_{0}^{N}-y_0)+\sqrt{N}M_{t}^{N}+\int_{0}^{t}\sqrt{N}[\beta(Y_{s}^{N})-b(y_s)]ds\\
&=D_{0}^{N}+\sqrt{N}M_{t}^{N}+\int_{0}^{t}\sqrt{N}[\beta(Y_{s}^{N})-b(Y_{s}^{N})]ds+\int_{0}^{t}\sqrt{N}[b(Y_{s}^{N})-b(y_s)]ds.
\end{split}
\end{equation}

Define 
\begin{equation}\label{sde}
D_{t}=D_{0}+\int_{0}^{t}b'(y_s)D_s ds+M_t,
\end{equation}
where $b'(y)=\left(\frac{\partial b(y(i))}{\partial y(j)}\right)_{ij}$ and $M_{t}=(M_{t}(0),\cdots,M_{t}(K))\in \mathbb{R}^{K+1}$ be a real continuous centered Gaussian martingale, with Doob-Meyer brackets given by
\begin{eqnarray}
\boldlangle M(k) \boldrangle_{t}&=&\int_{0}^{t}(b_{+}(y_{s})(k)+b_{-}(y_{s})(k))ds,\nonumber\\
\boldlangle M(k),M(k+1) \boldrangle_{t}&=&-\int_{0}^{t}\left[\left(\lambda(1- p)+p\frac{g(k+1)}{\sum_{j=0}^{K}y_{j}g(j)}\right) y_{k+1}\right.\nonumber\\
& & + \left.\left(\gamma-\sum_{j}jy_{j}\right) y_{k}\right]ds \quad \text{for } k<K,\nonumber \\
\boldlangle M(k),M(j) \boldrangle_{t}&=&0\quad \text{for } |k-j|>1.
\end{eqnarray}
Here $b_{+}(y), b_{-}(y)$ denote the positive and the negative parts of function $b(y)$ respectively.

Now we state the functional central limit theorem for the empirical measure process below.
\begin{theorem}[Functional Central Limit Theorem]\label{difftheorem}
Consider $D_{t}^{N}$ in $\mathbb{D}(\mathbb{R}_{+},\mathbb{R}^{K+1})$ with the Skorokhod $J_{1}$ topology, and suppose that 
$\limsup_{N\rightarrow \infty}\sqrt{N}(\frac{M}{N}-\gamma)< \infty$. 
Then if $D_{0}^{N}$ converges in distribution to $D_{0}$, then $D_{t}^{N}$ converges to the unique OU process solving 
$$D_{t}=D_{0}+\int_{0}^{t}b'(y_s)D_s ds+M_t$$
 in distribution, where $b'(y)$ is specified as follows,

\begin{equation}\label{b'1}
\frac{\partial}{\partial y_{i}}b(y_0)=iy_{0}-\left(\gamma-\sum_{j=0}^{K}jy_{j}\right)\mathbf{1}_{i=0}+\lambda\left[(1-p)+p\frac{g(1)}{\sum_{j=0}^{K}y_{j}g(j)}\right]\mathbf{1}_{i=1}-\frac{\lambda  p y_1 \cdot g(1)g(i)}{\left(\sum_{j=0}^{K}y_{j}g(j)\right)^2},
\end{equation}
\begin{eqnarray}\label{b'2}
\frac{\partial}{\partial y_{i}}b(y_k)&=&-\frac{\lambda p \cdot g(i)}{\left(\sum_{j=0}^{K}y_{j}g(j)\right)^2}\left(y_{k+1}g_{k+1}(k+1)-y_{k}g(k)\right)+i(y_{k}-y_{k-1})\nonumber\\
& &+\lambda \left(1-p+p\frac{g_{k+1}(k+1)}{\sum_{j=0}^{K}y_{j}g(j)}\right)\mathbf{1}_{i=k+1}+\left(\gamma-\sum_{j=0}^{K}jy_{j}\right)\mathbf{1}_{i=k-1}\nonumber\\
& & -\left[\left(1-p+p\frac{g(k)}{\sum_{j=0}^{K}y_{j}g(j)}\right)\lambda  +\gamma-\sum_{j=0}^{K}jy_{j}\right]\mathbf{1}_{i=k}, \quad \text{for } k=1,...,K-1,\nonumber\\
\end{eqnarray}
\begin{equation}\label{b'3}
\frac{\partial}{\partial y_{i}}b(y_K)=\frac{\lambda p y_{K}\cdot g(K)g(i)}{\left(\sum_{j=0}^{K}y_{j}g(j)\right)^2}-iy_{K-1}-\left(1-p+p\frac{g(K)}{\sum_{j=0}^{K}y_{j}g(j)}\right)\lambda\mathbf{1}_{i=K}+\left(\gamma-\sum_{j=0}^{K}jy_{j}\right)\mathbf{1}_{i=K-1}.
\end{equation}

\end{theorem}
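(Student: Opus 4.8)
The plan is to build on the semimartingale representation of $Y^N_t$ from Equation~(\ref{semiY}) and on the functional law of large numbers (Theorem~\ref{fluid_limit}), and to identify the limit of $D^N_t=\sqrt N(Y^N_t-y_t)$ through the decomposition already displayed above. Writing $\beta(\cdot)$ for the exact finite-$N$ drift of the empirical process and $b(\cdot)$ for the mean-field vector field of Equation~(\ref{eqn:b}), that representation splits $D^N_t$ into the initial term $D_0^N$, a rescaled martingale $\sqrt N M^N_t$, a drift-discrepancy term $\int_0^t \sqrt N[\beta(Y^N_s)-b(Y^N_s)]\,ds$, and a linearization term $\int_0^t \sqrt N[b(Y^N_s)-b(y_s)]\,ds$. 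I would treat these four pieces separately and then assemble the limit through a tightness plus uniqueness argument.

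First, for the drift-discrepancy term, note that $\beta$ and $b$ differ only through the replacement of $M/N$ by its limit $\gamma$ in the bike-return rate, so $\beta(y)-b(y)=(M/N-\gamma)\sum_{n<K}(\mathbf 1_{n+1}-\mathbf 1_n)y_n$. Multiplying by $\sqrt N$ and invoking the hypothesis $\limsup_{N\to\infty}\sqrt N(M/N-\gamma)<\infty$ keeps this contribution bounded and, under the intended normalization $\sqrt N(M/N-\gamma)\to 0$, asymptotically negligible uniformly on $[0,t_0]$. Second, for the linearization term I would Taylor-expand $b$ about $y_s$, writing $b(Y^N_s)-b(y_s)=b'(y_s)(Y^N_s-y_s)+R^N_s$, with $\sqrt N R^N_s$ dominated by $|D^N_s|\cdot|Y^N_s-y_s|$; the FLLN then forces $Y^N_s\to y_s$ uniformly, so the remainder vanishes and $\int_0^t\sqrt N[b(Y^N_s)-b(y_s)]\,ds$ converges to $\int_0^t b'(y_s)D_s\,ds$, with $b'(y)$ given explicitly in Equations~(\ref{b'1})--(\ref{b'3}).

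Third, for the martingale term I would apply the martingale functional central limit theorem in the spirit of Ethier--Kurtz: compute the predictable quadratic variation $\langle \sqrt N M^N\rangle_t$ from the transition rates $Q^N$, verify that it converges to the deterministic brackets $\int_0^t(b_+(y_s)+b_-(y_s))\,ds$ together with the stated off-diagonal covariations, and check that the maximal jump of $\sqrt N M^N$ is $O(1/\sqrt N)\to 0$, which supplies the Lindeberg condition and forces the limit to be continuous. This identifies $\sqrt N M^N_t$ as converging to the centered Gaussian martingale $M_t$ with the prescribed Doob--Meyer brackets. Finally, I would establish C-tightness of $\{D^N\}$ from the tightness of each piece together with a Gronwall-type bound coming from the linear drift, extract a convergent subsequence, and show every limit point solves the linear SDE~(\ref{sde}); since that equation is a time-inhomogeneous Ornstein--Uhlenbeck system with continuous coefficient $b'(y_\cdot)$ and Gaussian driver $M$, its solution is pathwise unique, so the full sequence converges in distribution to $D$.

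The hard part will be controlling the nonlinearity $g(n)/\sum_{j}y_j g(j)$ that appears in both $b$ and $b'$: this term is smooth only away from the hyperplane $\sum_j y_j g(j)=0$, so the Taylor expansion and the uniform bounds on $b'$ demand that the mean-field trajectory $y_s$, and hence $Y^N_s$ with high probability, stay bounded away from that singular set. I would secure this by arguing that, since $g$ is non-negative and non-decreasing and not identically zero, the denominator is bounded below along the compact trajectory $\{y_s:s\le t_0\}$, and then transfer this lower bound to $Y^N_s$ using the FLLN confinement to a tube around $y_s$. Establishing these uniform-in-$s$ bounds, and the attendant verification of the martingale FCLT hypotheses, is where the real work lies; the remaining steps are the standard Taylor, Gronwall, and uniqueness machinery.
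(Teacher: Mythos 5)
Your proposal follows essentially the same route as the paper's proof: the identical four-term semimartingale decomposition of $D_t^N$, the martingale FCLT with the same bracket computations and $1/\sqrt{N}$ maximal-jump bound (the paper's Lemma \ref{martigale_brackets_choice} and the argument inside Lemma \ref{tightness_choice}), a Gronwall-based second-moment bound and C-tightness (Lemmas \ref{L2bound_choice} and \ref{tightness_choice}), and identification of the unique OU limit via Ethier--Kurtz, Theorem 4.1 of Chapter 7 --- the only cosmetic difference being that the paper handles the linearization term with the mean value theorem plus uniform continuity of $b'$ where you use a second-order Taylor remainder. The step you single out as the hard part is actually immediate in the paper's setting, since its Proposition \ref{Lipschitz} implicitly assumes $\min_{0\le j\le K} g(j)\ge C_{\min}>0$, which gives $\sum_{j=0}^{K} y_j g(j)\ge C_{\min}$ for every probability vector $y$ with no tube-confinement argument needed; and your observation that the clean OU limit requires $\sqrt{N}\left(\frac{M}{N}-\gamma\right)\to 0$ rather than mere boundedness is a fair point that the paper's own proof glosses over.
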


\begin{proof}

To prove Theorem \ref{difftheorem}, we take the following 4 steps.

\begin{itemize}
\item[1).](Lemma \ref{martigale_brackets_choice})$\sqrt{N}M_{t}^{N}$ is a family of martingales independent of $D_{0}^{N}$ with Doob-Meyer brackets given by
\begin{eqnarray}
\boldlangle \sqrt{N}M^{N}(k)\boldrangle_{t}&=&\int_{0}^{t}(\beta_{+}(Y_{s}^{N})(k)+\beta_{-}(Y_{s}^{N})(k))ds,\nonumber\\
\boldlangle \sqrt{N}M^{N}(k),\sqrt{N}M^{N}(k+1)\boldrangle_{t}&=&-\int_{0}^{t}\left[\left((1-p)+p\frac{g(k+1)}{\sum_{j=0}^{K}jy_{j}}\right)\lambda Y_{s}^{N}(k+1)\right.\nonumber\\
& & \left.+\left(\frac{M}{N}-\sum_{j=0}^{K}jY_{s}^{N}(j)\right)Y_{s}^{N}(k)\right] ds \quad \text{for } k<K,\nonumber\\
\boldlangle \sqrt{N}M^{N}(k),\sqrt{N}M^{N}(j)\boldrangle_{t}&=&0\quad \text{for } |k-j|>1.
\end{eqnarray}
\item[2).](Lemma \ref{L2bound_choice}) For any $T\geq 0$, $$\limsup_{N\rightarrow \infty}\mathbb{E}(|D_{0}^{N}|^2)<\infty \Rightarrow \limsup_{N\rightarrow \infty}\mathbb{E}(\sup_{0\leq t\leq T}|D_{t}^{N}|^2)<\infty. $$
\item[3).] (Lemma \ref{tightness_choice}) If $(D_{0}^{N})_{N=1}^{\infty}$ is tight then $(D^{N})_{N=1}^{\infty}$ is tight and its limit points are continuous.
\item[4).]	If $D_{0}^{N}$ converges to $D_{0}$ in distribution, then $D_{t}^{N}$ converges to the unique OU process solving $D_{t}=D_{0}+\int_{0}^{t}b'(y_s)D_s ds+M_t$ in distribution.
\end{itemize}

We provide the proofs of Lemma \ref{martigale_brackets_choice}, Lemma \ref{L2bound_choice} and Lemma \ref{tightness_choice} (step 1-3) in the Appendix (Section \ref{App}).   For step 4), by Theorem 4.1 in Chapter 7 of \citet{Ethier2009},  it suffices to prove the following condition holds
$$\sup_{t\leq T}\left|\int_{0}^{t}\left\{\sqrt{N}[b(Y_s^N)-b(y_s)]-b'(y_s)D_{s}^{N}\right\}ds\right|\xrightarrow{p} 0.$$
By Equations (\ref{b'1}), (\ref{b'2}), and (\ref{b'3}), we know that $b(y_t)$ is continuously differentiable with respect to $y_t$. By the mean value theorem, for every $0\leq s\leq t$ there exists a vector $Z_{s}^{N}$ in between $Y_{s}^{N}$ and $y_s$ such that
$$b(Y_{s}^{N})-b(y_s)=b'(Z_{s}^{N})(Y_{s}^{N}-y_s).$$
Therefore
$$\int_{0}^{t}\left\{\sqrt{N}[b(Y_s^N)-b(y_s)]-b'(y_s)D_{s}^{N}\right\}ds=\int_{0}^{t}[b'(Z_{s}^{N})-b'(y_s)]D_{s}^{N}ds.$$
We know that $$\lim_{N\rightarrow \infty}\sup_{t\leq T}|b'(Z_{s}^{N})-b'(y_s)|=0\quad \text{in probability}$$
by the mean field limit convergence and the uniform continuity of $b'$.
By applying Chebyshev's inequality we have   that $D_{s}^{N}$ is bounded in probability. Thus, by Lemma 5.6 in \citet{ko2016strong},
$$\sup_{t\leq T}\left|\int_{0}^{t}\left\{\sqrt{N}[b(Y_s^N)-b(y_s)]-b'(y_s)D_{s}^{N}\right\}ds\right|\xrightarrow{p} 0.$$
\end{proof}


\section{Ratio Process Perspective}\label{Sec_Ratio}

In this section, we explore a new empirical measure process called the ratio process, which is the empirical measure of the ratio between the number of bikes at a station and its capacity, when station capacities differs among stations. According to data from CitiBike, station capacities range from 10 docks to 60 docks. In this case, the ratio process provides a better understanding of the bike availability across the network than the empirical measure process mentioned in previous section, especially for understanding full stations when station capacities differ. Let $K_{\max}$ be the maximum capacity among stations, we define the ratio process $R^N_t$ as follows,
\begin{equation}
R^N_t=\left(R_{t}^{N}(0),R_{t}^{N}(1),\cdots,R_{t}^{N}(K_{\max}-1)\right),
\end{equation}
where for $j=0,\cdots,K_{\max}-1$,
\begin{equation}
R_{t}^{N}(j)=\frac{1}{N}\sum_{i=1}^{N}\mathbf{1}\left\{\frac{j}{K_{\max}} \leq \frac{X_{i}(t)}{K_i}<\frac{j+1}{K_{\max}}\right\}.
\end{equation}

To demonstrate the difference between the empirical measure process and the ratio process in real life setting, we collected CitiBike's real-time system data in the General Bikeshare Feed Specification (GBFS) format. GBFS provides a real-time snapshot of the city's fleet, which includes the number of bikes and docks at every CitiBike station at the time. The data was pulled from CitiBike's API once every second for a period of time in September 2017. Figure \ref{Fig_ratio} gives a screenshot of the empirical measure process of bike distribution and its ratio process at a given time. We can observe that when capacities differ across stations, the ratio process is indeed better at capturing the proportion of stations at full capacity.

\begin{figure}[h]
\centering
\includegraphics[scale=0.5]{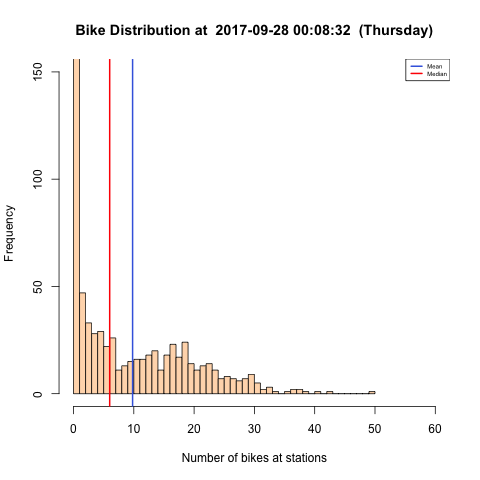}~
\includegraphics[scale=0.5]{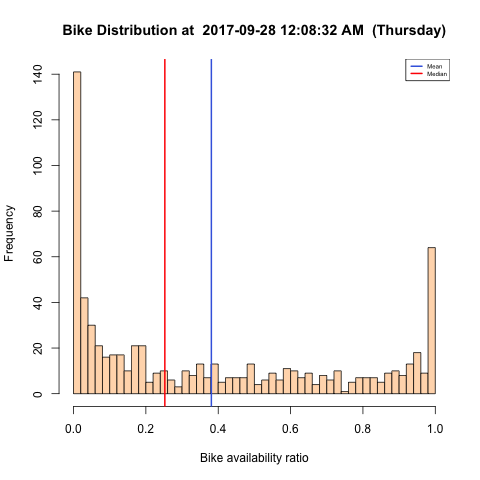}
\caption{A Snapshot of CitiBike Empirical Measure Process vs. Ratio Process}\label{Fig_ratio}
\end{figure}

It is easy to show that $R^N_t$ is a Markov process with the following transition rates:
\begin{itemize}
\item When a customer picks up a bike at station $i$,
\begin{eqnarray}
& &Q^{N}\left(R^N_t,R^N_t+\frac{1}{N}\left(\mathbf{1}_{\floor*{\frac{(X_{i}(t)-1)K_{\max}}{K_{i}}}}-\mathbf{1}_{\floor*{\frac{X_{i}(t)K_{\max}}{K_{i}}}}\right)\right)\nonumber\\
&=&\left((1-p)\lambda_{i}+p\lambda_{i}N\frac{g(X_i(t))}{\sum_{j=1}^{N}g(X_{j}(t))}\right)\mathbf{1}\{ X_{i}(t) > 0 \}.
\end{eqnarray}
\item When a customer drops off a bike at station $i$,
\begin{eqnarray}
& &Q^{N}\left(R^N_t,R^N_t+\frac{1}{N}\left(\mathbf{1}_{\floor*{\frac{(X_{i}(t)+1)K_{\max}}{K_{i}}}}-\mathbf{1}_{\floor*{\frac{X_{i}(t)K_{\max}}{K_{i}}}}\right)\right)\nonumber\\
&=&\mu P_{i}\left(M-\sum_{k=1}^{N}X_{k}(t)\right)\mathbf{1}_{X_{i}(t)<K_i}.
\end{eqnarray}
\end{itemize}

We can prove the following functional forward equations,
 \begin{eqnarray}
& & \lefteqn{ \updot{\mathbb{E}}[f(R^N_t) | R^{N}_0 = r^{N}_0]  \equiv  \frac{d}{dt} \mathbb{E}[f(R^N_t) | R^{N}_0 = r^{N}_0]  } \nonumber \\ 
&=&  \sum_{i=1}^{N}\mathbb{E}\left[ \left( f(R^N_t+\frac{1}{N}\left(\mathbf{1}_{\floor*{\frac{(X_{i}(t)+1)K_{\max}}{K_{i}}}}-\mathbf{1}_{\floor*{\frac{X_{i}(t)K_{\max}}{K_{i}}}}\right)) - f(R^N_t) \right) \cdot \left(\mu P_{i}\left(M\right.\right.\right.\nonumber\\
& -&\left.\left.\left.\sum_{k=1}^{N}X_{k}(t)\right)\mathbf{1}\{X_{i}(t)<K_{i}\}\right) \right]  \nonumber\\
 & +&  \sum_{i=1}^{N}\mathbb{E}\left[ \left( f(R^N_t+\frac{1}{N}\left(\mathbf{1}_{\floor*{\frac{(X_{i}(t)-1)K_{\max}}{K_{i}}}}-\mathbf{1}_{\floor*{\frac{X_{i}(t)K_{\max}}{K_{i}}}}\right)) - f(R^N_t) \right) \cdot \left( \left((1-p)\lambda \right.\right.\right.\nonumber\\
 & +&\left.\left.\left.p\lambda N\frac{g(X_i(t))}{\sum_{j=1}^{N}g(X_{j}(t))}\right)\mathbf{1}\{ X_{i}(t) > 0 \}  \right) \right].
 \end{eqnarray} 
 
The downside of this representation of $R^N_t$ is that the state transition depends on $X_i(t)$ in a very complicated way. Thus, another way to deal with the ratio process is through the empirical measure process $\tilde{Y}_t^{N}(n,k)$, which is the proportion of stations with $n$ bikes and capacity $k$ at time $t$, defined as
 \begin{equation}
\tilde{Y}_t^{N}(n,k)=\frac{1}{N}\sum_{i=1}^{N}\mathbf{1}\{X_{i}(t)=n,K_i=k\}.
 \end{equation}
 Condition on $\tilde{Y}_t^{N}(n,k)=\tilde{y}(n,k)$,  transition rates of $\tilde{y}$ are specified as follows:
\begin{itemize}
\item When a customer arrives to a station with $n$ bikes and capacity $k$ to retrieve a bike,
\begin{eqnarray}
& &Q^{N}\left(\tilde{y},\tilde{y}+\frac{1}{N}(\mathbf{1}_{(n-1,k)}-\mathbf{1}_{(n,k)}) \right)\nonumber\\
&=& \left((1-p)\lambda N\tilde{y}(n,k)+p\lambda N\frac{\tilde{y}(n,k)g(n)}{\sum_{h \in \mathcal{K}}\sum_{j=0}^{h}\tilde{y}(n,h)g(j)}\right) \mathbf{1}_{n>0}.
\end{eqnarray}
\item When a customer returns a bike to a station with $n$ bikes and capacity $k$, 
\begin{eqnarray}
Q^{N} \left(\tilde{y},\tilde{y}+\frac{1}{N}(\mathbf{1}_{(n+1,k)}-\mathbf{1}_{(n,k)}) \right) &=&
 \tilde{y}(n,k)N\left(\frac{M}{N}-\sum_{n'}\sum_{k'}n'\tilde{y}(n',k')\right)\mathbf{1}_{n<k}.\nonumber\\
\end{eqnarray}
\end{itemize}

Define $\mathcal{K}^{N}$ as the set of unique values that station capacity $k$ can take, then we can rewrite the ratio process $R^N_t$ in terms of $\tilde{Y}^N(n,k)$,
\begin{equation}
R_{t}^{N}(j)=\sum_{k\in \mathcal{K}^N}\sum_{n=0}^{k}\tilde{Y}_t^{N}(n,k)\mathbf{1}\left\{\floor*{\frac{nK_{\max}}{k}}=j\right\}.
\end{equation}

The benefit of this representation is that the ratio process $R^N$ is just a linear transformation of these empirical processes $\tilde{Y}_t^N(n,k)$. Thus, we can utilize previous mean field and diffusion limit results we have shown for $\tilde{Y}_t^N(n,k)$ for fixed $k$, to show similar results for the ratio process. In Sections \ref{Fluid_Limit_Ratio} and \ref{Diffusion_Limit_Ratio} we provide mean field limit and diffusion limit analysis for the ratio process.

\subsection{Mean Field Limit for the Ratio Process}\label{Fluid_Limit_Ratio}

Now we state the main theorem in this section that proves the convergence of ratio process to its mean field limit.

\begin{theorem}[Functional Law of Large Number]\label{fluid_limit_ratio}
Let $|. |$ denote the Euclidean norm in $\mathbb{R}^{\mathbb{N}\times \mathcal{K}}$. Suppose that $\lim_{N\rightarrow \infty}\frac{M}{N}=\gamma$ and $\tilde{Y}_{0}^{N}\xrightarrow{p} \tilde{y}_0$. We also assume that capacity $k$ can only take value from a finite set, namely $\mathcal{K}$. Then we have for $\forall \epsilon>0$
$$\lim_{N\rightarrow \infty}P\left(\sup_{t\leq t_0}|R_t^N-r_t|>\epsilon\right)=0$$
where $$r_t(j)=\sum_{k\in \mathcal{K}}\sum_{n=0}^{k}\tilde{y}_t(n,k)\mathbf{1}\left\{\floor*{\frac{nK_{\max}}{k}}=j\right\}$$
and  $\tilde{y}_t$ is the unique solution to the following differential equation starting at $\tilde{y}_0$
\begin{equation}\label{diff_eqn_ratio}
\shortdot{\tilde{y}_{t}}=b(\tilde{y}_{t})
\end{equation}

where $b:[0,1]^{\mathbb{N}\times \mathcal{K}}\rightarrow \mathbb{R}^{\mathbb{N}\times \mathcal{K}}$ is a vector field satisfies
\begin{eqnarray}\label{eqn:b_ratio}
b(\tilde{y}_{t})&=&\sum_{k\in \mathcal{K}}\sum_{n=0}^{k}\left[ \left((1-p)+p\frac{g(n)}{\sum_{j=0}^{k}\tilde{y}(j,k)g(j)}\right)(\mathbf{1}_{n-1,k}-\mathbf{1}_{n,k})\lambda\mathbf{1}_{n>0}\right.\nonumber\\
& &\left.+\left(\gamma-\sum_{k\in \mathcal{K}}\sum_{j=0}^{k} j\tilde{y}(j,k) \right)(\mathbf{1}_{n+1,k}-\mathbf{1}_{n,k})\mathbf{1}_{n<k}\right] y_{t}(n,k) 
\end{eqnarray}
or for any $k\in \mathcal{K}$,
\begin{equation}
b(\tilde{y}(0,k))=\underbrace{-\left(\gamma-\sum_{h\in \mathcal{K}}\sum_{j=0}^{h} j\tilde{y}(j,h) \right)\tilde{y}(0,k)}_{\text{return a bike to a no-bike station}}+\underbrace{\left((1-p)+p\frac{g(1)}{\sum_{h \in \mathcal{K}}\sum_{j=0}^{K}\tilde{y}(j,h)g(j)}\right)\lambda \tilde{y}(1,k)}_{\text{retrieve a bike from a 1-bike station}},
\end{equation}

\begin{equation}
\begin{split}
b(\tilde{y}(i,k))=& \underbrace{\left((1-p)+p\frac{g(i)}{\sum_{h \in \mathcal{K}}\sum_{j=0}^{K}\tilde{y}(j,h)g(j)}\right)\lambda \tilde{y}(i+1,k)}_{\text{retrieve a bike from a $i+1$-bike station}}+\underbrace{\left(\gamma-\sum_{h\in \mathcal{K}}\sum_{j=0}^{h} j\tilde{y}(j,h) \right)y(i-1,k)}_{\text{return a bike to a $i-1$-bike station}}\\
& -\underbrace{\left(\left((1-p)+p\frac{g(i)}{\sum_{h \in \mathcal{K}}\sum_{j=0}^{K}\tilde{y}(j,h)g(j)}\right)\lambda+\gamma-\sum_{h\in \mathcal{K}}\sum_{j=0}^{h} j\tilde{y}(j,h) \right)\tilde{y}(i,k)}_{\text{retrieve and return a bike to a $i$-bike station}},
\end{split}
\end{equation}

for $ i=1,...,k-1$ and 
\begin{equation}
b(\tilde{y}(k,k))=\underbrace{-\left((1-p)+p\frac{g(k)}{\sum_{h \in \mathcal{K}}\sum_{j=0}^{K}\tilde{y}(j,h)g(j))}\right)\lambda \tilde{y}(k,k)}_{\text{retrieve a bike from a $k$-bike station}}+\underbrace{\left(\gamma-\sum_{h\in \mathcal{K}}\sum_{j=0}^{h} j\tilde{y}(j,h) \right)\tilde{y}(k-1,k)}_{\text{return a bike to a $k-1$-bike station}}.
\end{equation}
\end{theorem}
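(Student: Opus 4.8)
The plan is to reduce the statement to the functional law of large numbers already proven for an empirical process, and then transfer the conclusion through a fixed linear map. The crucial structural observation, recorded in the discussion preceding the theorem, is that the ratio process is a deterministic linear image of the joint empirical process $\tilde{Y}^N_t(n,k)$ counting the proportion of stations with $n$ bikes and capacity $k$. Writing $\Phi$ for the map
$$\Phi(\tilde{y})(j)=\sum_{k\in\mathcal{K}}\sum_{n=0}^{k}\tilde{y}(n,k)\mathbf{1}\left\{\floor*{\frac{nK_{\max}}{k}}=j\right\},$$
we have $R^N_t=\Phi(\tilde{Y}^N_t)$ and $r_t=\Phi(\tilde{y}_t)$. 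Because $\mathcal{K}$ is finite and each station holds at most $K_{\max}$ bikes, $\Phi$ is a linear map between finite-dimensional spaces with $0/1$ coefficients; in particular it is globally Lipschitz with some constant $C$, and the floor terms are fixed deterministic weights, so they introduce no discontinuity at the level of the measure.

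\textbf{Step 1 (FLLN for $\tilde{Y}^N$).} First I would establish $\sup_{t\le t_0}|\tilde{Y}^N_t-\tilde{y}_t|\xrightarrow{p}0$, where $\tilde{y}_t$ solves $\shortdot{\tilde{y}_t}=b(\tilde{y}_t)$. Under the assumption that capacities take values in the finite set $\mathcal{K}$, the process $\tilde{Y}^N$ is a finite-dimensional density-dependent CTMC with the transition rates listed above, all of which scale linearly in $N$ with jumps of size $1/N$. Hence the argument is essentially verbatim the proof of Theorem \ref{fluid_limit}: one writes the semimartingale decomposition $\tilde{Y}^N_t=\tilde{Y}^N_0+\int_0^t\beta(\tilde{Y}^N_s)\,ds+\tilde{M}^N_t$, shows the scaled drift $\beta$ converges uniformly to $b$, controls the martingale term via Doob's $L^2$ inequality (its predictable quadratic variation is $O(1/N)$), and closes the loop with Grönwall's inequality using the Lipschitz continuity of $b$. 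The only new bookkeeping is that the state is now indexed by the pair $(n,k)$ rather than $n$ alone, which affects notation but not the structure of the estimates.

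\textbf{Step 2 (existence, uniqueness, and Lipschitz continuity of $b$).} The limiting ODE needs a unique solution. The only non-polynomial terms in $b$ are the choice fractions $p\,g(n)/\sum_{h\in\mathcal{K}}\sum_{j=0}^{h}\tilde{y}(j,h)g(j)$. Since $g$ is non-negative and non-decreasing and positive on $\{n\ge 1\}$, and since the total-bike conservation law $\sum_{k\in\mathcal{K}}\sum_{n=0}^{k}n\,\tilde{y}(n,k)=\gamma$ keeps mass off the all-empty corner, the denominator stays bounded away from $0$ on the relevant invariant region; there $b$ is continuously differentiable, hence Lipschitz, and Picard–Lindelöf yields a unique solution starting at $\tilde{y}_0$. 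Combining Steps 1 and 2 with the Lipschitz bound for $\Phi$ gives
$$\sup_{t\le t_0}|R^N_t-r_t|\le C\,\sup_{t\le t_0}|\tilde{Y}^N_t-\tilde{y}_t|\xrightarrow{p}0,$$
which is exactly the assertion, and the stated formula $r_t(j)=\sum_{k\in\mathcal{K}}\sum_{n=0}^{k}\tilde{y}_t(n,k)\mathbf{1}\{\floor*{nK_{\max}/k}=j\}$ is just $r_t=\Phi(\tilde{y}_t)$.

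The main obstacle I anticipate is not the transfer through $\Phi$ (which is elementary once the linear representation is in hand) but rather the uniform lower bound on the denominator in Step 2, needed to make $b$ Lipschitz on a region the dynamics cannot leave. This requires verifying that the conservation law together with $\gamma>0$ prevents the empirical mass from concentrating entirely on states where the choice function vanishes; if one wishes to allow $g(0)>0$ or degenerate $g$, the argument must be adapted, for instance by working on the invariant manifold $\{\sum_{k,n} n\,\tilde{y}(n,k)=\gamma\}$ and establishing a quantitative bound there. The secondary source of care is the two-index accounting in Step 1, ensuring the drift convergence and martingale-bracket estimates carry over cleanly to the augmented state space indexed by $(n,k)$ with $0\le n\le k$ and $k\in\mathcal{K}$.
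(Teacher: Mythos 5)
Your proposal is correct and takes essentially the same route as the paper: the paper likewise writes $R^N_t$ as the fixed $0/1$-weighted linear image of $\tilde{Y}^N_t(n,k)$, bounds $|R^N_t-r_t|^2$ by a constant (involving $K_{\max}$ and the size of $\mathcal{K}$) times $\sum_{k\in\mathcal{K}}|\tilde{Y}^N_t(\cdot,k)-\tilde{y}_t(\cdot,k)|^2$, and then invokes Theorem \ref{fluid_limit} for each capacity class together with the finiteness of $\mathcal{K}$. Your Steps 1 and 2 merely spell out in detail what the paper compresses into its citation of Theorem \ref{fluid_limit} (the FLLN for the two-index process and well-posedness of the limiting ODE, including the positive lower bound on the choice-function denominator that the paper assumes via $C_{\min}>0$ in Proposition \ref{Lipschitz}), so the argument is essentially identical.
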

\begin{proof}
Since
\begin{eqnarray}
|R_t^N-r_t|^2 &\leq & \sum_{j=0}^{K_{max}}\left(\sum_{k\in \mathcal{K}}\sum_{n=0}^{k}(\tilde{Y}^{N}_t(n,k)-\tilde{y}_t(n,k))\mathbf{1}\left\{\floor*{\frac{nK_{\max}}{k}}=j\right\}\right)^2 \nonumber\\
&\leq & K_{max}(K_{max}+1)n(\mathcal{K}) \sum_{k \in \mathcal{K}}\sum_{n=0}^{k}|\tilde{Y}^{N}_t(n,k)-\tilde{y}_t(n,k)|^2 \nonumber \\
&\leq & 2K_{max}^3\sum_{k \in \mathcal{K}}|\tilde{Y}^{N}_t(\cdot,k)-\tilde{y}_t(\cdot,k)|^2,
\end{eqnarray}
for any $k \in \mathcal{K}$, by Theorem \ref{fluid_limit}, we know that
$$\lim_{N\rightarrow \infty}P\left(\sup_{t\leq t_0}|\tilde{Y}^N_t(\cdot,k)-\tilde{y}_t(\cdot,k)|>\epsilon\right)=0.$$
Thus, since $\mathcal{K}$ is a finite set, it implies
$$\lim_{N\rightarrow \infty}P\left(\sup_{t\leq t_0}|R^N_t-r_t|>\epsilon\right)\leq \lim_{N\rightarrow \infty}P\left(\sup_{t\leq t_0}\sum_{k \in\mathcal{K}}|\tilde{Y}^N_t(\cdot, k)-\tilde{y}_t(\cdot, k)|^2>\epsilon^2/2K_{max}^3\right)=0.$$
\end{proof}


\subsection{Diffusion Limit for the Ratio Process}\label{Diffusion_Limit_Ratio}
We define the diffusion scaled ratio process by substracting the mean field limit from the ratio process and rescaling by $\sqrt{N}$.  Thus, we obtain the following expression for the diffusion scaled ratio process
\begin{eqnarray}
Z_{t}^{N}&=&\sqrt{N}(R_{t}^{N}-r_{t}).
\end{eqnarray}
For each $j=0,\cdots,K_{max}$, we have
\begin{eqnarray}
Z_{t}^{N}(j)&=&\sum_{k\in \mathcal{K}}\sum_{n=0}^{k}\sqrt{N}(\tilde{Y}^{N}_t(n,k)-\tilde{y}_t(n,k))\mathbf{1}\left\{\floor*{\frac{nK_{\max}}{k}}=j\right\}\nonumber\\
&=& \sum_{k\in \mathcal{K}}\sum_{n=0}^{k}D^{N}_{t}(n,k)\mathbf{1}\left\{\floor*{\frac{nK_{\max}}{k}}=j\right\}.
\end{eqnarray}

Now we state the functional central limit theorem for the ratio process as follows.
\begin{theorem}\label{difftheorem_ratio}
Consider $Z_{t}^{N}$ in $\mathbb{D}(\mathbb{R}_{+},\mathbb{R}^{K+1})$ with the Skorokhod $J_{1}$ topology, and suppose that 
$\limsup_{N\rightarrow \infty}\sqrt{N}(\frac{M}{N}-\gamma)< \infty$. 
Then if $Z_{0}^{N}$ converges in distribution to $Z_{0}$, then $Z_{t}^{N}$ converges to the unique OU process solving $Z_{t}=Z_{0}+\int_{0}^{t}b'(y_s)Z_s ds+\tilde{M}_t$ in distribution, where 
$$\tilde{M}_t(j)=\sum_{k\in \mathcal{K}}\sum_{n=0}^{k}M_{t}(n,k)\mathbf{1}\left\{\floor*{\frac{nK_{\max}}{k}}=j    \right\}$$
for $j=0,\cdots,K_{\max}$. 
\end{theorem}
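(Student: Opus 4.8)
The plan is to exploit the fact, already recorded just before the theorem, that the diffusion-scaled ratio process is an \emph{exact} linear image of the diffusion-scaled empirical process indexed by capacity. Writing $D^N_t(n,k) = \sqrt{N}(\tilde Y^N_t(n,k) - \tilde y_t(n,k))$, the excerpt gives the identity $Z^N_t(j) = \sum_{k\in\mathcal{K}}\sum_{n=0}^{k} D^N_t(n,k)\,\mathbf{1}\{\lfloor nK_{\max}/k\rfloor = j\}$. So I would introduce the operator $\Phi:\mathbb{R}^{\mathbb{N}\times\mathcal{K}}\to\mathbb{R}^{K_{\max}+1}$ defined by $(\Phi d)(j)=\sum_{k\in\mathcal{K}}\sum_{n=0}^{k} d(n,k)\mathbf{1}\{\lfloor nK_{\max}/k\rfloor = j\}$, so that $Z^N_t=\Phi(D^N_t)$ for every $t$, i.e. $Z^N=\Phi\circ D^N$ as a process identity. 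Because $\mathcal{K}$ is finite and every capacity is bounded by $K_{\max}$, the admissible index set $\{(n,k):k\in\mathcal{K},\,0\le n\le k\}$ is finite, so $\Phi$ is a fixed bounded linear map (a finite $0/1$ matrix) and in particular globally Lipschitz and continuous. The theorem should then reduce to the functional central limit theorem for $\tilde Y^N$ plus the continuous mapping theorem.

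Concretely, the steps are as follows. First I would establish the two-index analogue of Theorem \ref{difftheorem} for the capacity-indexed empirical process $\tilde Y^N(n,k)$, obtaining $D^N\Rightarrow D$ in $\mathbb{D}(\mathbb{R}_+,\mathbb{R}^{\mathbb{N}\times\mathcal{K}})$ under the $J_1$ topology, where $D$ is the OU process $D_t=D_0+\int_0^t b'(y_s)D_s\,ds+M_t$; since $\mathcal{K}$ is finite this is the single-capacity martingale/tightness argument repeated over a finite index set, with the only nonzero cross-brackets being nearest-neighbour in $n$ within a fixed capacity class (no single customer event moves mass between distinct capacity classes, so cross-brackets for $k\ne k'$ vanish), exactly as one expects from the transition rates given for $\tilde y$. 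Second, I would invoke that composition with a continuous function is continuous in the $J_1$ topology: if $\omega_n\to\omega$ in $J_1$ with time changes $\lambda_n\to\mathrm{id}$, then $\Phi\circ\omega_n\to\Phi\circ\omega$ with the \emph{same} $\lambda_n$, by uniform continuity of $\Phi$ on the relevant compacts. Hence $D\mapsto\Phi\circ D$ is $J_1$-continuous and the continuous mapping theorem yields $Z^N=\Phi\circ D^N\Rightarrow\Phi\circ D=:Z$ in distribution.

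Third, I would identify $Z$ with the OU process in the statement. Applying the linear map $\Phi$ to the SDE for $D$ and using linearity of the stochastic integral gives $Z_t=\Phi(D_0)+\int_0^t\Phi\bigl(b'(y_s)D_s\bigr)\,ds+\Phi(M_t)$, so $\tilde M_t:=\Phi(M_t)$ is precisely the Gaussian martingale defined in the statement, with brackets inherited by bilinearity, $\langle\tilde M(i),\tilde M(j)\rangle_t=\sum_{\lfloor\cdot\rfloor=i}\sum_{\lfloor\cdot\rfloor=j}\langle M(n,k),M(n',k')\rangle_t$. The drift $\int_0^t\Phi\bigl(b'(y_s)D_s\bigr)\,ds$ is the meaning to attach to the shorthand $\int_0^t b'(y_s)Z_s\,ds$ in the theorem. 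Uniqueness of $Z$ then follows from uniqueness of the linear SDE solved by $D$ (established in Theorem \ref{difftheorem}) together with the determinism of $\Phi$.

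The main obstacle is less the convergence itself — a clean continuous-mapping argument once the two-index FCLT is in hand — than two bookkeeping points. The first is verifying that the coordinate-collapsing map $\Phi$ is genuinely continuous in the Skorokhod $J_1$ topology: since $J_1$ is \emph{not} a linear topology (addition of càdlàg paths is not jointly continuous), one cannot simply assert ``$\Phi$ linear, hence continuous'' but must argue through composition with a continuous function, the crucial point being that one time change serves all coordinates simultaneously. The second is the drift identification: because $Z$ lives in a strictly lower-dimensional space than $D$, the expression $b'(y_s)Z_s$ cannot be read literally as a matrix--vector product in $Z$-space, and one must check that $\Phi(b'(y_s)D_s)$ is a well-defined, consistent limiting drift, i.e. that $Z$ is an autonomous linear diffusion whose coefficients descend through $\Phi$.
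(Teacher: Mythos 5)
Your proposal is correct and lands on the same reduction as the paper---both arguments rest on Theorem \ref{difftheorem} applied capacity-by-capacity together with the linear relation $Z^N_t(j)=\sum_{k\in\mathcal{K}}\sum_{n=0}^{k}D^N_t(n,k)\mathbf{1}\{\floor*{\frac{nK_{\max}}{k}}=j\}$---but the convergence step is carried out differently, and your version is the more robust one. The paper first observes that for each $(j,k)$ there is at most one $n^*(j,k)$ with $\floor*{\frac{n^*(j,k)K_{\max}}{k}}=j$, so that $Z^N_t(j)=\sum_{k\in\mathcal{K}}D^N_t(n^*(j,k),k)$, and then passes to the limit term by term, justifying the convergence of the sum by asserting that the summands are independent because ``$Y^N(n,k)$ and $Y^N(n',k')$ are independent when $k\neq k'$.'' That independence claim is delicate: capacity classes are coupled through the shared fleet (every drop-off rate involves $M-\sum_{i}X_i(t)$) and through the choice normalization $\sum_{h\in\mathcal{K}}\sum_{j}\tilde{y}(j,h)g(j)$, so the classes are not independent; what is actually true---and what you correctly isolate---is that no single transition moves mass between capacity classes, so only the martingale cross-brackets for $k\neq k'$ vanish, while the limiting OU components remain coupled through the drift. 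Your route (a joint two-index FCLT for $D^N$ over the finite index set, then the continuous mapping theorem applied to the fixed $0/1$ matrix $\Phi$, with the standard observation that one time change serves all coordinates so that composition with $\Phi$ is $J_1$-continuous) sidesteps the independence assertion entirely and, unlike the paper's display---which establishes convergence of $Z^N_t(j)$ only marginally in $t$---delivers the process-level convergence in $\mathbb{D}(\mathbb{R}_+,\cdot)$ that the theorem actually asserts. The price is that you must genuinely prove the two-index analogue of Theorem \ref{difftheorem}, but as you note this is the same martingale/tightness machinery repeated over a finite index set.

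One caveat applies to both proofs equally: the identification of the limit drift. Applying $\Phi$ to the SDE for $D$ yields the drift $\int_0^t\Phi\bigl(b'(y_s)D_s\bigr)ds$, and rewriting this as $\int_0^t b'(y_s)Z_s\,ds$ requires the Jacobian to descend through $\Phi$, i.e.\ an intertwining relation $\Phi\,b'(y_s)=A_s\Phi$ for some matrix $A_s$ acting on the lower-dimensional $Z$-space. Neither the paper (which simply asserts the SDE for $Z_t$ in its last line) nor your proposal verifies this, though you at least flag it explicitly as the second bookkeeping point. As written, the honest conclusion of both arguments is $Z_t=Z_0+\int_0^t\Phi\bigl(b'(y_s)D_s\bigr)ds+\tilde{M}_t$ with $\tilde{M}=\Phi(M)$, and the autonomous OU form in the theorem statement should be read as shorthand for this.
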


\begin{proof}
We have shown in Theorem \ref{difftheorem} that $$D_{t}^{N}(n,k)\Rightarrow D_{t}(n,k)$$
for any $k\in \mathcal{K}$ and $0\leq n\leq k$. 
We also know that
$$\floor*{\frac{nK_{\max}}{k}}=j \Rightarrow j\leq \frac{nK_{\max}}{k}<j+1 \Rightarrow j\cdot \frac{k}{K_{\max}}\leq n< (j+1)\cdot \frac{k}{K_{\max}}.$$
Since $k/K_{\max}<1$, there exists at most one  $n^*(j,k)$ in $\{0,\cdots,k\}$ such that $\floor*{\frac{n^*(j,k)K_{\max}}{k}}=j$. Then
\begin{eqnarray}
Z_{t}^{N}(j)&=& \sum_{k\in \mathcal{K}}D^{N}_{t}(n^*(j,k),k).
\end{eqnarray}
Since all $k$'s in $\mathcal{K}$ are distinct, $\{D^{N}_{t}(n^*(j,k),k)\}_{k\in \mathcal{K}}$ are independent. This can be shown by the fact that $Y^{N}(n,k)$ and $Y^{N}(n',k')$ are independent when $k\neq k'$. Thus, 
\begin{eqnarray}
\sum_{k\in \mathcal{K}}D^{N}_{t}(n^*(j,k),k)\Rightarrow \sum_{k\in \mathcal{K}}D_{t}(n^*(j,k),k).
\end{eqnarray}
Define $$Z_{t}(j)=\sum_{k\in \mathcal{K}}D_{t}(n^*(j,k),k),$$
then we proved that $Z_{t}^{N}$ converge in distribution to $Z_{t}$, and that $Z_t$ satisfies the SDE $Z_{t}=Z_{0}+\int_{0}^{t}b'(y_s)Z_s ds+\tilde{M}_t$.
\end{proof}

\section{Steady State Analysis}\label{Sec_SteadyState}

In this section, we provide the steady state analysis of the mean field limit of both the empirical measure process and the ratio process. First, we compute explicitly the equilibrium point and the Lyapunov functions for the mean field limit. We also show the interchanging of limits result for both the empirical measure process and the ratio process, that is, the stationary distribution $Y^N(\infty)$ of the empirical process converges to the same equilibrium point $\bar{y}$ of the mean field limit. 

\subsection{Equilibrium Analysis for Empirical Process}\label{equil_Y}
Our goal in this section is to prove the following diagram commutes,
\begin{displaymath}
    \xymatrix{
        Y^{N}(t)\ar[r]^{ N\rightarrow \infty} \ar[d]_{t\rightarrow \infty} & y(t) \ar[d]^{t \rightarrow \infty} \\
        Y^{N}(\infty) \ar[r]_{N\rightarrow \infty}       & y(\infty) }
\end{displaymath}

We have showed in Section \ref{Fluid_Limit} that $Y^{N}(t)\xrightarrow{p} y(t)$, we will now give the proof of the existence and uniqueness of $y(\infty)$, as well as the convergence of invariant measure $Y^{N}(\infty)$ to $y(\infty)$.

\begin{proposition}\label{unique}
The equilibrium point $\bar{y}$ for the mean field limit exists and is unique, and it satisfies the following equations
\begin{equation}
\bar{y}_n=\frac{\prod_{i=0}^{n-1}\rho_i(\bar{y})}{1+\sum_{i=0}^{K}\prod_{k=0}^{i-1}\rho_k(\bar{y})}
\end{equation}
for $n=0,\cdots,K$ where
\begin{equation}\label{rho}
\rho_k(\bar{y})=\frac{\gamma-\sum_{j=0}^{K}j\bar{y}_j}{\lambda\left(1-p+p\frac{g(k)}{\sum_{j=0}^{K}g(j)\bar{y}_j}\right)}
\end{equation}
for $k=0,\cdots,K-1$.
\end{proposition}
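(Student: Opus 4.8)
The equilibrium $\bar y$ is determined by $b(\bar y)=0$, so the plan is first to exploit the birth--death (tridiagonal) structure of $b$ and only afterwards to deal with the self-consistency. Introduce the two scalars $a:=\gamma-\sum_{j=0}^{K}j\bar y_j$, the common drop-off rate into any non-full station, and $S:=\sum_{j=0}^{K}g(j)\bar y_j$, the choice normalizer, and write $\mu_k:=\lambda(1-p+p\,g(k)/S)$ for the retrieval rate out of a $k$-bike station. With this notation the interior components read $b(y_k)=\mu_{k+1}y_{k+1}+a\,y_{k-1}-(\mu_k+a)y_k$, with the natural modifications at $k=0$ and $k=K$. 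Setting every component to zero and telescoping the net upward fluxes $J_k:=a\,\bar y_k-\mu_{k+1}\bar y_{k+1}$ shows that $b(\bar y_0)=0$ forces $J_0=0$, and then $b(\bar y_k)=J_{k-1}-J_k=0$ propagates $J_k=0$ for every $k$. This is exactly the detailed-balance relation of a birth--death chain, and it gives $\bar y_{k+1}/\bar y_k=\rho_k(\bar y)$; combined with $\sum_n\bar y_n=1$ this is the stated product form.

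The remaining difficulty is that $\rho_k(\bar y)$ depends on $\bar y$ only through $a$ and $S$, so the product formula is an implicit characterization rather than a closed-form solution. I would make the dependence explicit by writing $\rho_k=a\,\tilde{\rho}_k(S)$ with $\tilde{\rho}_k(S)=1/(\lambda(1-p+p\,g(k)/S))$, so that $a$ enters as a fugacity and the product form defines a genuine probability vector $\bar y(a,S)$ for every $(a,S)$ in the compact rectangle $a\in[0,\gamma]$, $S\in[g(0),g(K)]$. An equilibrium is then precisely a fixed point of the continuous self-map $\Phi(a,S)=\bigl(\gamma-\sum_j j\,\bar y_j(a,S),\ \sum_j g(j)\,\bar y_j(a,S)\bigr)$. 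Existence follows from Brouwer's theorem once one verifies that $\Phi$ maps this rectangle into itself: the second coordinate is automatically a convex combination of the $g(j)$, hence in $[g(0),g(K)]$, and the first coordinate is bounded above by $\gamma$, with the lower bound the only point needing a short argument.

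Uniqueness is where I expect the real work to be, because the two self-consistency equations are coupled through $g$. My plan is a monotone reduction to one scalar. For fixed $S$ the family $\bar y_n(a,S)\propto a^{\,n}\prod_{i<n}\tilde{\rho}_i(S)$ is an exponential family in $\log a$ with sufficient statistic $n$, so $\sum_j j\,\bar y_j(a,S)$ is strictly increasing in $a$ (its $a$-derivative is a variance), running from $0$ at $a=0$ up toward $K$; since $\gamma-a$ is strictly decreasing and the two curves start with $0<\gamma$ and end with a bounded quantity against $-\infty$, the equation $\sum_j j\,\bar y_j(a,S)=\gamma-a$ has a unique root $a=a^{\ast}(S)$, and $a^{\ast}$ is continuous. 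Substituting collapses the system to the single scalar equation $S=\psi(S):=\sum_j g(j)\,\bar y_j\bigl(a^{\ast}(S),S\bigr)$ on $[g(0),g(K)]$, and I would prove that $\psi$ has a unique fixed point by showing $\psi(S)-S$ is strictly monotone. The delicate step is signing the total derivative of $\psi$: raising $S$ both tilts the distribution toward fuller stations directly (through $\tilde{\rho}_k$, which increases in $S$ faster for larger $k$ because $g$ is non-decreasing) and indirectly through $a^{\ast}(S)$, and one must show the combined effect keeps $\psi$ a weak contraction. This is precisely where the hypotheses that $g$ is non-negative and non-decreasing are used, and it is the main obstacle; the explicit formula in the statement then follows by normalizing the product form at the unique $(a^{\ast},S^{\ast})$.
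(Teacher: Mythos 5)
Your first step is correct and coincides with the paper's: telescoping the fluxes $J_k=a\,\bar y_k-\mu_{k+1}\bar y_{k+1}$ to get detailed balance, hence $\bar y_{k+1}/\bar y_k=\rho_k(\bar y)$ and the normalized product form. From there, however, the proposition's substantive content is existence and uniqueness of the self-consistent solution, and your proposal does not actually establish uniqueness. By your own admission, the crux --- showing $\psi(S)-S$ is strictly monotone --- is left open, and the sign there is genuinely ambiguous: raising $S$ lowers every retrieval rate $\mu_k=\lambda(1-p+p\,g(k)/S)$, tilting the product-form distribution toward fuller stations and hence increasing $\sum_j g(j)\bar y_j$ (pushing $\psi$ up), while the induced decrease of $a^{\ast}(S)$ (needed to keep $\sum_j j\,\bar y_j + a=\gamma$) tilts mass back toward emptier stations (pushing $\psi$ down). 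Nothing in your sketch controls which effect wins, so the reduction, as written, proves existence but not uniqueness. A secondary flaw: your Brouwer argument on the rectangle $[0,\gamma]\times[g(0),g(K)]$ does not go through as stated, because when $\gamma<K$ the first coordinate $\gamma-\sum_j j\,\bar y_j(a,S)$ of $\Phi$ can be negative for large $a$, so the rectangle is not invariant. This is repairable --- indeed your own one-dimensional reduction repairs it: the exponential-family variance argument correctly gives a unique continuous root $a^{\ast}(S)$, and then $\psi$ maps the compact interval $[g(0),g(K)]$ into itself (it is a convex combination of the $g(j)$), so a fixed point exists by the intermediate value theorem, with no need for Brouwer. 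But existence was never the hard part.

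For comparison, the paper avoids your two-dimensional coupling entirely by a per-coordinate argument: it rewrites the self-consistency as $\gamma=f(\rho_k):=\lambda\rho_k\bigl(1-p+p\,g(k)/\sum_{j=0}^{K}g(j)\bar y_j\bigr)+\sum_{j=0}^{K}j\bar y_j$, regards $\bar y$ as a function of $\rho_k$ through the product form (holding the other ratios fixed), and shows $f'(\rho_k)>0$ term by term --- the key computations being $\bar y_j-\rho_k\bar y_j'(\rho_k)\geq 0$ and the sign pattern $\bar y_j'(\rho_k)<0$ for $j\leq k$, $\bar y_j'(\rho_k)>0$ for $j>k$, which makes $\sum_j j\,\bar y_j'(\rho_k)>0$ --- so each equation $\gamma=f(\rho_k)$ has a unique positive root. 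If you want to complete your route instead, you must sign the total derivative of $\psi$, which requires an inequality comparing the covariance-type terms generated by the direct $S$-dependence against those generated by $\frac{d a^{\ast}}{dS}$; this is exactly the step your proposal defers, and until it is carried out the proposition is not proved.
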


\begin{proof}

Denote $\bar{y}$ as the equilibrium point for the mean field limit, then this queue can be modeled as a standard birth-death process and
analyzed using the basic relation
$$\left(\gamma-\sum_{j=0}^{K}j\bar{y}_j\right) \bar{y}_k=\lambda\left(1-p+p\frac{g(k)}{\sum_{j-0}^{K}g(j)\bar{y}_j}\right) \bar{y}_{k+1}.$$
Denote \begin{equation}\label{rho}
\rho_k(\bar{y})=\frac{\gamma-\sum_{j=0}^{K}j\bar{y}_j}{\lambda\left(1-p+p\frac{g(k)}{\sum_{j=0}^{K}g(j)\bar{y}_j}\right)}
\end{equation}
for $k=0,\cdots,K-1$.
Then from the recurring relationship we get 
$$\bar{y}_k=\bar{y}_0 \prod_{i=0}^{k-1}\rho_i(\bar{y}).$$
Since $\sum_{k=0}^{K}\bar{y}_k=1$, we have 
$$\bar{y}_0=\frac{1}{1+\sum_{i=1}^{K}\prod_{k=0}^{i-1}\rho_k(\bar{y})}.$$
Then $\bar{y}$ satisfies the following equations,
\begin{equation}
\bar{y}_n=\frac{\prod_{i=0}^{n-1}\rho_i(\bar{y})}{1+\sum_{i=1}^{K}\prod_{k=0}^{i-1}\rho_k(\bar{y})}
\end{equation}
for $n=0,\cdots,K$.

To show the existence and uniqueness of $\bar{y}$, we rewrite Equation (\ref{rho}) as 
\begin{equation}\label{root}
\gamma=\lambda\rho_k(\bar{y})\left(1-p+p\frac{g(k)}{\sum_{j=0}^{K}g(j)\bar{y}_j}\right)+\sum_{j=0}^{K}j\bar{y}_j\triangleq f(\rho_k).
\end{equation}
For any $0\leq k \leq K-1$, denote the right-hand side of the equation to be a function of $\rho_k$, then 
\begin{equation}
f'(\rho_k)=\lambda (1-p)+\lambda p g(k)\frac{\sum_{j=0}^{K}g(j)\bar{y}_j-\rho_k \sum_{j=0}^{K}g(j)\bar{y}_j'(\rho_k)}{\left(\sum_{j=0}^{K}g(j)\bar{y}_j\right)^2}+\sum_{j=0}^{K}j\bar{y}_j'(\rho_k).
\end{equation}
Since
\begin{eqnarray}
\bar{y}_j-\rho_k\bar{y}_j'(\rho_k)&=&\frac{\prod_{i=0}^{j-1}\rho_i}{1+\sum_{i=0}^{K-1}\prod_{h=0}^{i}\rho_h}-\frac{\prod_{i=0}^{j-1}\rho_i \left( \mathbf{1}_{j > k}-\sum_{i=k}^{K-1}\prod_{h=0}^{i}\rho_h\right)}{\left(1+\sum_{i=0}^{K-1}\prod_{h=0}^{i}\rho_h\right)^2}\nonumber\\
&=&\frac{\prod_{i=0}^{j-1}\rho_i}{\left(1+\sum_{i=0}^{K-1}\prod_{h=0}^{i}\rho_h\right)^2}\left(1-\mathbf{1}_{j> k}+\sum_{i=k}^{K-1}\prod_{h=0}^{i}\rho_h\right)\nonumber\\
&\geq &0,
\end{eqnarray}
the second term of $f'(\rho_k)$ is positive. We also have that
\begin{equation*}
\bar{y}_j'(\rho_k)<0 \text{ when } j\leq k 
\end{equation*}
\begin{equation*}
\bar{y}_j'(\rho_k)>0 \text{ when } j > k 
\end{equation*}
which means the third term $\sum_{j=0}^{K}j\bar{y}_j'(\rho_k)$ is also positive. Thus, $f(\rho_k)$ is an increasing function of $\rho_k$ on $\mathbb{R}^+$ and Equation (\ref{root}) has a unique root $\rho_k>0$. Therefore, we have shown that $\bar{y}$ exists and is unique.
\end{proof}


Now we proceed to showing a Lyapunov function of the mean field limit $y(t)$. To simplify the notation, we denote the distribution $\nu_{\rho(y)}$ as
$$\nu_{\rho(y)}(n)=\frac{\prod_{i=0}^{n-1}\rho_i(y)}{1+\sum_{i=1}^{K}\prod_{k=0}^{i-1}\rho_k(y)}$$
for $n=0,\cdots,K$ where 
$$\rho_k(y)=\frac{\gamma-\sum_{j=0}^{K}jy_j}{\lambda\left(1-p+p\frac{g(k)}{\sum_{j=0}^{K}g(j)y_j}\right)}$$
for $k=0,\cdots,K-1$. We also define 
$$Z(\rho)=1+\sum_{i=1}^{K}\prod_{k=0}^{i-1}\rho_k$$
and $$\phi_{k,n}(y_n)=\rho_k(y)$$
Then we have the following theorem of the Lyapunov function for $y(t)$.
\begin{theorem}[Lyapunov function] \label{Lyapunov}
Denote $\mathcal{X}=\{0,1,\cdots,K\}$ and $\mathcal{P}(\mathcal{X})$ as the set of all probability measures on $\mathcal{X}$. Then for any $y\in \mathcal{P}(\mathcal{X})^{\mathrm{o}}$, define $g(y)=h(y)-\log(Z(\rho(y))+\sum_{n=0}^{K}\sum_{k=0}^{n-1}S_{k,n}(y_n))$ where
$$h(y)=\sum_{n=0}^{K}y_n\log(y_n/\nu_{\rho(y)}(n))$$
is the relative entropy of $y$ w.r.t $\nu_{\rho(y)}$,
and
$S_{k,n}$ is some primitive of $x\mapsto x\frac{\phi_{k,n}'(x)}{\phi_{k,n}(x)}$. Then $g$ is a Lyapunov function for the dynamical system $y(t)$.


\end{theorem}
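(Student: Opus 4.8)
The plan is to verify the two defining properties of a Lyapunov function for the flow $\dot y = b(y)$ on the open simplex $\mathcal{P}(\mathcal{X})^{\mathrm{o}}$: that $g(y)$ is continuously differentiable there, and that its derivative along trajectories, $\tfrac{d}{dt}g(y(t)) = \sum_{m=0}^{K}\tfrac{\partial g}{\partial y_m}(y)\, b(y)_m$, is nonpositive and vanishes only at the equilibrium $\bar y$ of Proposition \ref{unique}. First I would record that $g$ is well defined and $C^1$ on $\mathcal{P}(\mathcal{X})^{\mathrm{o}}$: the logarithms are finite because every coordinate is strictly positive, and the primitives $S_{k,n}$ exist since $x \mapsto x\,\phi_{k,n}'(x)/\phi_{k,n}(x)$ is continuous on the region where $\rho_k(y)>0$.

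The crucial structural observation is that $b(y)$ has a one–dimensional birth–death flux form. I would write $b(y)_m = \Phi_{m-1}(y) - \Phi_m(y)$ with the convention $\Phi_{-1} = \Phi_K = 0$, where the net flux across the edge $(k,k+1)$ is $\Phi_k(y) = J_k^{+}(y) - J_k^{-}(y)$, with $J_k^{+}(y) = \left(\gamma - \sum_{j=0}^{K} j y_j\right) y_k$ the rate of returning a bike to a $k$–bike station and $J_k^{-}(y) = \lambda\left(1-p+p\,g(k)/\sum_{j=0}^{K} g(j)y_j\right) y_{k+1}$ the rate of retrieving a bike from a $(k+1)$–bike station. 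A discrete summation by parts (Abel's identity), using $\sum_m b(y)_m = 0$ to discard the additive constant appearing in $\partial g/\partial y_m$, then collapses the derivative into a sum over edges, $\tfrac{d}{dt}g(y(t)) = \sum_{k=0}^{K-1}\Phi_k(y)\left(\tfrac{\partial g}{\partial y_{k+1}} - \tfrac{\partial g}{\partial y_k}\right)$.

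The main obstacle — and the reason the correctors $-\log Z(\rho(y))$ and $\sum_{n=0}^{K}\sum_{k=0}^{n-1}S_{k,n}(y_n)$ appear in $g$ — is to show that this edge difference simplifies to the clean log–ratio $\tfrac{\partial g}{\partial y_{k+1}} - \tfrac{\partial g}{\partial y_k} = \log\tfrac{y_{k+1}}{\rho_k(y)\,y_k} = \log\tfrac{J_k^{-}(y)}{J_k^{+}(y)}$. Here I would expand the relative entropy as $h(y) = \sum_{n=0}^{K} y_n\log y_n - \sum_{n=0}^{K} y_n\sum_{i=0}^{n-1}\log\rho_i(y) + \log Z(\rho(y))$, so that subtracting $\log Z$ removes the partition–function contribution, and then differentiate term by term. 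Differentiating $\log\rho_i(y)$ produces spurious self–consistent ``field'' terms coming from the dependence of $\rho_i$ on $y$ through $\sum_j j y_j$ and $\sum_j g(j)y_j$; the definition of $S_{k,n}$ as a primitive of $x\,\phi_{k,n}'(x)/\phi_{k,n}(x)$ is engineered precisely so that its derivative cancels these field terms. Verifying this cancellation coordinate by coordinate is the technical heart of the argument, and I expect it to be the most delicate bookkeeping step.

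Granting the edge identity, the conclusion follows from an elementary inequality. Since $\Phi_k = J_k^{+} - J_k^{-}$, each summand becomes $(J_k^{+} - J_k^{-})\log\tfrac{J_k^{-}}{J_k^{+}} = -(J_k^{+} - J_k^{-})(\log J_k^{+} - \log J_k^{-}) \le 0$, because $(a-b)(\log a - \log b)\ge 0$ for all $a,b>0$. Hence $\tfrac{d}{dt}g(y(t))\le 0$. Equality forces $J_k^{+}(y) = J_k^{-}(y)$ for every $k$, i.e. all net fluxes vanish; by the birth–death balance this is exactly the fixed–point relation of Proposition \ref{unique}, whose solution $\bar y$ is unique, so the dissipation is strict away from $\bar y$. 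This establishes that $g$ is a Lyapunov function, and combined with LaSalle's invariance principle it would further yield convergence of $y(t)$ to $\bar y$.
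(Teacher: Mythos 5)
Your proposal is correct and reaches the paper's conclusion, but it organizes the dissipation computation differently, so a comparison is worthwhile. The first half of your plan --- expanding $h(y)-\log Z(\rho(y))$, differentiating term by term, and letting the primitives $S_{k,n}$ cancel the ``field'' terms produced by the dependence of $\rho_i(y)$ on $\sum_j jy_j$ and $\sum_j g(j)y_j$, so that $\partial g/\partial y_n=\log\bigl(y_n/\prod_{i=0}^{n-1}\rho_i(y)\bigr)+1$ --- is exactly the paper's derivation of Equation (\ref{gradient_g}), and the paper is just as terse about that cancellation as you are, so flagging it as the delicate bookkeeping step is fair. Where you genuinely diverge is the second half: the paper writes $b(y)=yB_y$ for a tridiagonal generator $B_y$ that is reversible with respect to $\nu_{\rho(y)}$, sets $q_y(m,n)=\nu_{\rho(y)}(m)B_y(m,n)$, and evaluates a general Dirichlet-form identity $\mathcal{E}(f,u)=-yB_yu$ with $f_n=y_n/\nu_{\rho(y)}(n)$ to get $yB_y\nabla g(y)=-\tfrac12\sum_{m,n}q_y(m,n)(f_m-f_n)(\log f_m-\log f_n)$; you instead decompose $b(y)_m=\Phi_{m-1}(y)-\Phi_m(y)$ into edge fluxes and Abel-sum to get $\sum_{k}(J_k^{+}-J_k^{-})\log(J_k^{-}/J_k^{+})$. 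These are term-by-term the same quantity: reversibility gives $J_k^{+}-J_k^{-}=q_y(k,k+1)(f_k-f_{k+1})$ and the definition of $\rho_k$ gives $\log(J_k^{-}/J_k^{+})=\log f_{k+1}-\log f_k$. Your version is more elementary, avoids invoking reversibility as a separate lemma, and makes the equality case transparent (all net fluxes vanish, i.e.\ detailed balance $y_{k+1}=\rho_k(y)y_k$, hence $y=\nu_{\rho(y)}=\bar y$ by uniqueness in Proposition \ref{unique}); the paper's Dirichlet-form route buys generality, since it applies verbatim to any reversible generator $B_y$, not only the birth--death structure. Two small cautions for a full write-up: your strict-dissipation argument needs $J_k^{+}>0$ and $J_k^{-}>0$, i.e.\ $\gamma-\sum_j jy_j>0$ in addition to $y\in\mathcal{P}(\mathcal{X})^{\mathrm{o}}$ (the paper buries the same hypothesis in the irreducibility of $B_y$, with interior-invariance only established later in the proof of Theorem \ref{converge}); and watch the index of the choice function on the edge $(k,k+1)$ --- the drift $b$ uses $g(k+1)$ for retrievals from a $(k+1)$-bike station while $\rho_k$ as printed uses $g(k)$, and your $J_k^{-}$ must follow whichever convention $\rho_k$ uses for the ratio identity $\log(J_k^{-}/J_k^{+})=\log\bigl(y_{k+1}/(\rho_k(y)y_k)\bigr)$ to be exact.
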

\begin{proof}
The proof is provided in the Appendix (Section \ref{App}).
\end{proof}

Now we state the main theorem of this section, the interchanging of limits result for the mean field limit.

\begin{theorem}\label{converge}
The sequence of invariant probability distributions $\bar{y}^N$ of process $Y^{N}_t$ converges weakly to he Dirac mass at $\bar{y}$ as $N\rightarrow \infty$.
\end{theorem}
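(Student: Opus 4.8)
The plan is to follow the now-standard route for interchange-of-limits results in mean-field systems possessing a globally attracting equilibrium, in the spirit of Benaïm and Le Boudec. Since each $Y^N_t$ takes values in the probability simplex $\mathcal{P}(\mathcal{X})$, which is a compact subset of $\mathbb{R}^{K+1}$, the space of probability measures over it is weakly compact. Consequently the sequence of invariant distributions $\{\bar{y}^N\}$ is automatically tight, and by Prohorov's theorem every subsequence admits a further subsequence converging weakly to some probability measure $\pi$ on $\mathcal{P}(\mathcal{X})$. It therefore suffices to show that every such limit point $\pi$ equals $\delta_{\bar{y}}$; the convergence of the full sequence to $\delta_{\bar{y}}$ then follows from the usual subsequence argument.

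First I would show that any limit point $\pi$ is invariant under the semiflow $\Phi_t$ generated by the mean-field ODE $\dot{y}=b(y)$ of Theorem \ref{fluid_limit}. Let $\mathcal{L}^N$ denote the generator of the CTMC $Y^N_t$. Because $\bar{y}^N$ is stationary, $\int \mathcal{L}^N f \, d\bar{y}^N = 0$ for every $f$ in a core of smooth test functions on the simplex. Using the transition rates specified before Proposition \ref{functional_forward_eqn}, one verifies that $\mathcal{L}^N f(y) \to \nabla f(y)\cdot b(y)$ uniformly on $\mathcal{P}(\mathcal{X})$, since the jumps have size $O(1/N)$ while the scaled rates converge to the drift $b$. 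Passing to the limit along the convergent subsequence yields $\int \nabla f \cdot b \, d\pi = 0$ for all test $f$, which is exactly the infinitesimal stationarity of $\pi$ for the deterministic flow; by the standard characterization of invariant measures for Feller semiflows this upgrades to $(\Phi_t)_*\pi = \pi$ for every $t\geq 0$.

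Next I would combine the Lyapunov function of Theorem \ref{Lyapunov} with LaSalle's invariance principle to pin down the support of any such flow-invariant $\pi$. The function $g$ satisfies $\frac{d}{dt}g(\Phi_t(y)) \leq 0$ along trajectories, and the set on which this derivative vanishes consists only of equilibria of $b$; by Proposition \ref{unique} this set is the single point $\bar{y}$. Hence the largest $\Phi_t$-invariant subset on which $g$ is constant is $\{\bar{y}\}$, so LaSalle gives that $\bar{y}$ is globally asymptotically stable and that the Birkhoff center of the flow equals $\{\bar{y}\}$. Since any invariant probability measure for the flow must be supported on the Birkhoff center, we conclude $\pi = \delta_{\bar{y}}$, and the subsequence argument finishes the proof.

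The main obstacle I anticipate is the second step: rigorously establishing that each subsequential limit of the invariant measures is genuinely $\Phi_t$-invariant. This hinges on the uniform convergence of the finite-$N$ generators to the drift on the test-function core and on upgrading the weak identity $\int \nabla f \cdot b \, d\pi = 0$ to full flow-invariance. The normalizing denominator $\sum_{j} g(j) y_j$ appearing in the rates means one must ensure uniform control of $b$ and its derivatives over the simplex, which is precisely where the non-negativity and monotonicity assumptions on $g$, together with the positivity arguments from the proof of Proposition \ref{unique}, are needed to bound the denominator away from degeneracy. Once flow-invariance is secured, the Lyapunov/LaSalle step and the uniqueness from Proposition \ref{unique} render the remainder routine.
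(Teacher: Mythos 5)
Your proposal follows essentially the same route as the paper's proof: compactness of $\mathcal{P}(\mathcal{X})$ and Prohorov's theorem give subsequential weak limits of $\bar{y}^N$; uniform convergence of the rescaled generators $Q_N(F)$ to the degenerate generator $Q_{\infty}(F)(y)=\left<\nabla F(y),b(y)\right>$ (via a second-order Taylor expansion, jumps of size $O(1/N)$) shows each limit point is invariant for the deterministic flow; and the Lyapunov function of Theorem \ref{Lyapunov} together with uniqueness of the equilibrium (Proposition \ref{unique}) forces the limit to be the Dirac mass at $\bar{y}$. The only real difference is the packaging of the last step: you invoke LaSalle's invariance principle and the fact that flow-invariant measures are carried by the Birkhoff center, whereas the paper argues directly, integrating the identity $g(y(x,s))-g(y(x,s'))=\int_{s'}^{s}Q_{\infty}(g)(y(x,u))\,du$ against the invariant limit $\tilde{y}$, applying Fubini to get $\int Q_{\infty}(g)\,d\tilde{y}=0$, and then using $Q_{\infty}(g)\le 0$ with equality only at the equilibrium. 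These are interchangeable formulations of the same argument.

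There is, however, one point your sketch glosses over and which the paper must (and does) treat explicitly: the Lyapunov function $g$ contains the relative entropy $\sum_n y_n\log\left(y_n/\nu_{\rho(y)}(n)\right)$ and is only defined and $C^1$ on the interior $\mathcal{P}(\mathcal{X})^{\mathrm{o}}$, so LaSalle in its textbook form (a continuous $V$ on a compact invariant set) does not apply directly, and a subsequential limit measure could a priori charge the boundary $\partial\mathcal{P}(\mathcal{X})$. The paper closes this hole by showing that no trajectory of $\shortdot{y}=b(y)$ can touch the boundary at positive time: $y_n(x,s)=0$ would force $b_n(y(x,s))=0$, which propagates to $y_{n\pm1}(x,s)=0$ and ultimately to $y(x,s)\equiv 0$, contradicting $\sum_n y_n=1$; it then evaluates the Lyapunov identity only on intervals $[s',s]$ with $s'>0$, where flow-invariance of $\tilde{y}$ confines the mass to the interior. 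Conversely, the obstacle you single out as the main difficulty, namely degeneracy of the denominator $\sum_j g(j)y_j$, is actually harmless: as used in the proof of Proposition \ref{Lipschitz}, $g$ is bounded below by a constant $C_{\min}>0$, and $\sum_j y_j=1$ on the simplex, so the denominator is uniformly bounded away from zero and the uniform generator convergence is routine. In short, your outline is correct in substance and matches the paper's strategy, but the boundary-avoidance argument is the missing ingredient needed to make your LaSalle/Birkhoff-center step rigorous.
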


\begin{proof}
The Lyapunov function we introduced in Theorem \ref{Lyapunov} plays a key role in the proof of this result.  We will be showing the result in the following steps:
\begin{itemize}
\item[1)]The infinitesimal generator $Q_{N}$ of the empirical measure process converges to the  degenerate generator $Q_{\infty}(\cdot)(y)$, which satisfies $Q_{\infty}(F)(y)=\left<\nabla F(y),b(y)\right>$ for any $y\in \mathcal{P}(\mathcal{X})$ and $F\in C^{2}(\mathbb{R}^{K+1})$.
\item[2)] For any subsequence $\bar{y}^{N_p}$ of the invariant measures $\bar{y}^{N}$, its limit $\tilde{y}$ solves $Q_{\infty}\tilde{y}=0$, which means it is the invariant measure of the markov chain with infinitesimal generator $Q_{\infty}$.
\item[3)] Using the Lyapunov function $g$, we can show that $Q_{\infty}(g)\tilde{y}=\int_{\mathcal{P}(\mathcal{X})}yB_y\nabla g(y) \tilde{y}(dy)=0$. The integral being non-positive, and zero only at $\bar{y}$ , means $\tilde{y}$ is also the equilibrium of the dynamical system.
\item[4)] By uniqueness of the equilibrium point and compactness of $\mathcal{P}(\mathcal{X})$, $\bar{y}$ is the only weak limit of $\bar{y}^{N}$.
\end{itemize}

Now define the functional operator associated with the Q-matrix as 
\begin{eqnarray}
Q_{N}(F)(y)&=&\sum_{z\neq y}Q_N(y,z)(F(y)-F(z))\nonumber\\
&=& \sum_{n=0}^{K}\left[ \left((1-p)+p\frac{g(n)}{\sum_{j=0}^{K}y_{j}g(j)}\right)\left(F\left(y+\frac{1}{N}(\mathbf{1}_{n-1}-\mathbf{1}_{n})\right)-F(y)\right)\lambda N\mathbf{1}_{n>0}\right.\nonumber\\
& &\left.+N\left(\frac{M}{N}-\sum_{j=0}^{K} jy_{j} \right)\left(F\left(y+\frac{1}{N}(\mathbf{1}_{n+1}-\mathbf{1}_{n})\right)-F(y)\right)\mathbf{1}_{n<K}\right] y_n.
\end{eqnarray}
for any $F\in C^2(\mathbb{R}^{K+1})$. Now, it is easy to check that the sequence $Q_{N}(F)(y)$ converges to the following
\begin{eqnarray}
& &\sum_{n=0}^{K}\left[ \left((1-p)+p\frac{g(n)}{\sum_{j=0}^{K}y_{j}g(j)}\right)\left< \nabla F(y),\mathbf{1}_{n-1}-\mathbf{1}_{n}\right> \lambda \mathbf{1}_{n>0}\right.\nonumber\\
& &\left.+\left(\gamma-\sum_{j=0}^{K} jy_{j} \right)\left< \nabla F(y),\mathbf{1}_{n-1}-\mathbf{1}_{n}\right>\mathbf{1}_{n<K}\right] y_n.
\end{eqnarray}
which we define as $Q_{\infty}(F)(y)$. Moreover, by using Taylor’s formula at the second order, this convergence is uniform with respect to $y\in \mathcal{P}(\mathcal{X})$. Thus we have
$$Q_{\infty}(F)(y)=\left<\nabla F(y),b(y)\right>, y\in \mathcal{P}(\mathcal{X}).$$
Now since $\mathcal{X}$ is a finite set, the set of $\mathcal{P}(\mathcal{X})$ is compact, and the sequence of invariant measure $\bar{y}^N$ is relatively compact.  Let $\tilde{y}$ be the limit of some subsequence $\bar{y}^{N_p}$. Then for any $F\in C^2(\mathbb{R}^{K+1})$ and $p\geq 0$,
$$\int_{\mathcal{P}(\mathcal{X})}Q_{N_p}(F)(y)\bar{y}^{N_p}(dy)=0.$$
The uniform convergence of $Q_{N_p}(F)$ to $Q_{\infty}(F)$ implies
$$0=\int_{\mathcal{P}(\mathcal{X})}Q_{\infty}(F)(y)\tilde{y}(dy)=\int_{\mathcal{P}(\mathcal{X})}\left<\nabla F(y),b(y)\right>\tilde{y}(dy).$$
Then $\tilde{y}$ is an invariant measure of the Markov process associated with the infinitesimal generator $Q_{\infty}$.

Now we show that $y(t)$ must lies in the interior of $\mathcal{P}(\mathcal{X})$. For $t\geq 0$, denote $y(x, t)$ as the dynamical system at  time $t$ starting from $y_0=x\in \mathcal{P}(\mathcal{X})$. Assume that there exists $x\in \mathcal{P}(\mathcal{X})$, $s>0$ and $n\in \mathcal{X}$ such that $y_n(x,s)=0$. Since
$y_n(x, t)$ is non-negative and the function $t\rightarrow y(x, t)$ is of class $C^1$, it implies that $b_n(y(x,s))=\shortdot{y}_n(x,s)=0$. By the definition of $b_n(y)$  and that $ y_n(x,s)=0$, we know that $y_{n+1}(x,s)=y_{n-1}(x,s)=0$ when $n+1$ or $n-1$ is in $\mathcal{X}$. By repeating the same argument, we can conclude that all coordinates of $y(x,s)$ are zeros. This contradicts the fact that $y(x,s)$ is a probability measure. Hence the boundary $\partial \mathcal{P}(\mathcal{X})$ can't be reached by $y(x,t)$ in positive time. 

For $x\in \mathcal{P}(\mathcal{X})$ and $0<s'<s$, since $g(y(x,\cdot))$ is in class $C^1$ on $[s',s]$ and its derivative is $\left<\nabla g(y(x,\cdot)), b(y(x,\cdot))\right>$, we have
$$g(y(x,s))-g(y(x,s'))=\int_{s'}^{s}Q_{\infty}(g)(y(x,u))du.$$
By integrating both sides w.r.t $\tilde{y}$, the invariance of $\tilde{y}$ for the
process $y(x, t)$ and Fubini’s theorem show that
\begin{eqnarray}
0&=&\int_{\mathcal{P}(\mathcal{X})}g(y(x,s))\tilde{y}(dx)-\int_{\mathcal{P}(\mathcal{X})}g(y(x,s'))\tilde{y}(dx)\nonumber\\
&=&\int_{\mathcal{P}(\mathcal{X})}\int_{s'}^{s}Q_{\infty}(g)(y(x,u))du\tilde{y}(dx)\nonumber\\
&=&(s-s')\int_{\mathcal{P}(\mathcal{X})}Q_{\infty}(g)(x)\tilde{y}(dx).
\end{eqnarray}
By Theorem \ref{Lyapunov}, $\left<\nabla g(x), b(x)\right>\leq 0$ for all $x$, thus $Q_{\infty}(g)(x)=0$ $\tilde{y}$-almost surely. Thus $\tilde{y}$ is also the equilibrium of the dynamical system. Since we showed in Proposition \ref{unique} that the equilibrium point $\bar{y}$ is unique, we can conclude that the sequence of invariant probability measure $\bar{y}^N$ of process $Y^{N}(t)$ converges weakly to he Dirac mass at $\bar{y}$ as $N\rightarrow \infty$.

\end{proof}

\subsection{Equilibrium Analysis for Ratio Process}\label{equil_R}
In this section, we provide a similar equilibrium analysis for the ratio process $R^N_t$, which utilize the results we obtained in Section \ref{equil_Y}.
\begin{proposition}
The equilibrium point $\bar{r}$ for the mean field limit of the ratio process $r(t)$ exists and is unique, and it satisfies the following equation
$$\bar{r}=\sum_{k\in \mathcal{K}}y_{K}(k)\bar{y}(\cdot|K=k)A^{(k)}$$
where for each $k\in \mathcal{K}$, $A^{(k)}$ is a $(k+1)\times (K_{\max}+1)$ matrix defined as 
$$A^{(k)}_{ij}=\mathbf{1}\left\{\frac{iK_{\max}}{k}=j \right\}$$ for $i=0,\cdots,k$ and $j=0,\cdots,K_{\max}$ and  
$$y_{K}(k)=\sum_{n=0}^{k}\tilde{y}(n,k)$$ is the marginal distribution for capacity $k$.
\end{proposition}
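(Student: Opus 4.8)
The plan is to exploit the fact, established in Theorem \ref{fluid_limit_ratio}, that the ratio mean-field $r_t$ is the image of the joint bikes-and-capacity mean-field $\tilde y_t$ under the fixed linear map $T$ defined by $(Tv)(j)=\sum_{k\in\mathcal K}\sum_{n=0}^{k}v(n,k)\mathbf 1\{\floor*{nK_{\max}/k}=j\}$; consequently the entire question reduces to the equilibrium behaviour of $\tilde y_t$. One structural observation drives everything: a station's capacity never changes, so each capacity marginal $y_K(k)=\sum_{n}\tilde y_t(n,k)$ is conserved by the flow $\shortdot{\tilde{y}}=b(\tilde y)$ and is therefore pinned at its initial value throughout.

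\emph{Existence and uniqueness of the joint equilibrium $\bar{\tilde y}$.} Reading off $b(\tilde y)$ from Theorem \ref{fluid_limit_ratio}, I observe that, conditioned on a fixed capacity class $k$, the coordinates $\tilde y(\cdot,k)$ obey exactly the birth-death balance of Proposition \ref{unique}, with up-rate $\bar a:=\gamma-\sum_{k}\sum_{j}j\bar{\tilde y}(j,k)$ (returns) and down-rate governed by the global weight $\bar G:=\sum_{k}\sum_{j}g(j)\bar{\tilde y}(j,k)$ (retrievals). The crucial point is that these two scalars are the only channel coupling the capacity classes. Hence, given a pair $(\bar a,\bar G)$, each conditional law $\bar y(\cdot\mid K=k)$ is determined in product form precisely as in Proposition \ref{unique}, and closing the system amounts to solving the two self-consistency equations obtained by substituting $\bar{\tilde y}(n,k)=y_K(k)\,\bar y(n\mid K=k)$ back into the definitions of $\bar a$ and $\bar G$. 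Existence of a solution I would obtain by a Brouwer fixed-point argument on the compact convex slice of joint distributions carrying the prescribed marginals $\{y_K(k)\}$; uniqueness by extending the monotonicity argument of Proposition \ref{unique}, showing the self-consistency map is monotone in $(\bar a,\bar G)$ and so has a single fixed point. I expect this coupled monotonicity to be the main obstacle, since the mean-bike and $g$-weighted statistics now aggregate across all capacity classes and the clean scalar reduction of Proposition \ref{unique} must be re-derived for the two-parameter system.

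\emph{Transfer to the ratio process.} With $\bar{\tilde y}$ unique, global convergence $\tilde y_t\to\bar{\tilde y}$ as $t\to\infty$ follows from the Lyapunov function of Theorem \ref{Lyapunov}, applied class-by-class to the conditional birth-death systems and summed against the fixed marginals; the capacity label being an inert coordinate, the argument of Theorem \ref{converge} carries over. Since $T$ is linear, hence continuous, I conclude $r_t=T\tilde y_t\to T\bar{\tilde y}=:\bar r$, which establishes existence of the ratio equilibrium; its uniqueness is inherited because it is the image under the single-valued map $T$ of the unique limit $\bar{\tilde y}$.

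\emph{Explicit formula.} Finally I would substitute the conditional decomposition $\bar{\tilde y}(n,k)=y_K(k)\,\bar y(n\mid K=k)$ into $\bar r=T\bar{\tilde y}$ and recognise the indicator $\mathbf 1\{\floor*{nK_{\max}/k}=j\}$ as the $(n,j)$ entry $A^{(k)}_{nj}$ of the stated matrix. Writing the inner sum over $n$ as the row-vector/matrix product $\bar y(\cdot\mid K=k)A^{(k)}$ and factoring out $y_K(k)$ yields $\bar r=\sum_{k\in\mathcal K}y_K(k)\,\bar y(\cdot\mid K=k)A^{(k)}$, which is the claimed identity.
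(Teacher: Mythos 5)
Your proposal follows the same skeleton as the paper's own proof: decompose the joint mean field $\tilde y$ by capacity class, observe that the marginals $y_K(k)$ are conserved (a station never changes capacity, so the birth--death drift telescopes in $n$ for each fixed $k$), apply the single-capacity equilibrium result of Proposition \ref{unique} to each conditional law $\bar y(\cdot\mid K=k)$, and push the result through the linear map given by the matrices $A^{(k)}$ to obtain the stated formula. You are in fact more explicit than the paper about the one genuinely delicate point: the capacity classes are coupled through the two global scalars $\bar a=\gamma-\sum_{k}\sum_{j}j\,\bar{\tilde y}(j,k)$ and $\bar G=\sum_{k}\sum_{j}g(j)\,\bar{\tilde y}(j,k)$, so Proposition \ref{unique} cannot be invoked class-by-class without closing a self-consistency loop; the paper's proof simply asserts that existence and uniqueness ``for queueing networks with same capacities'' transfer, and elides this coupling entirely. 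Your Lyapunov-convergence detour ($\tilde y_t\to\bar{\tilde y}$ as $t\to\infty$) is unnecessary for the proposition as stated --- an equilibrium is a zero of the drift $b$, and the transfer of existence, uniqueness, and the formula to $\bar r$ needs only the linearity and single-valuedness of the map --- but it is harmless.

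The gap is that your key uniqueness step is announced rather than executed. Brouwer's theorem on the compact convex slice of joint laws with the prescribed marginals plausibly yields existence (provided $\bar G$ is bounded away from zero, which the positivity $g\geq C_{\min}>0$ used in Proposition \ref{Lipschitz} supplies). But ``the self-consistency map is monotone in $(\bar a,\bar G)$ and so has a single fixed point'' does not follow as stated: increasing self-maps can have many fixed points, and the natural map here is increasing in the problematic direction in the $G$-coordinate --- raising $G$ lowers the retrieval rates $\lambda\left(1-p+p\,g(n)/G\right)$, shifts each conditional birth--death law upward, and therefore raises the recomputed $g$-weight, so the $G$-equation has the form $G=\Phi(G)$ with $\Phi$ increasing, where uniqueness requires extra structure (for instance a derivative bound $\Phi'<1$, or a scalar reduction analogous to the one in Proposition \ref{unique}, where $f(\rho_k)$ is shown increasing so that $f(\rho_k)=\gamma$ pins $\rho_k$ down). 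Until this two-parameter analysis is carried out, uniqueness of $\bar{\tilde y}$, and hence of $\bar r$, is not established by your argument. To be fair, the paper's published proof does not carry this step out either; your sketch correctly locates where the real work lies but leaves it as a conjecture.
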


\begin{proof}

By the definition of the ratio process in Section \ref{Sec_Ratio}, the ratio process can be written as
$$R^N_t=\sum_{k\in \mathcal{K}^N}\sum_{n=0}^{k}\tilde{Y}_t^{N}(n,k)\mathbf{1}\left\{\floor*{\frac{nK_{\max}}{k}}=j\right\}=\sum_{k\in \mathcal{K}^{N}}\tilde{Y}_{t}^{N}(\cdot,k)A^{(k)}.$$
Similarly, the mean field limit for the ratio process $R^N_t$ can be written as
$$r_t=\sum_{k\in \mathcal{K}}\tilde{y}_{t}(\cdot,k)A^{(k)}.$$
Let the marginal distribution for capacity $k$ be 
$y_{K}(k)=\sum_{n=0}^{k}\tilde{y}(n,k)$, then the network of all stations with capacity $k$ can be viewed as a closed queueing network and its empirical mean field limit $y_t(\cdot |K=k)$ can be written as
$$y_t(n |K=k)=\frac{\tilde{y}_t(n,k)}{y_{K}(k)}$$
for $n=0,\cdots,k$. 
We have shown the uniqueness and existence of equilibrium points for queueing networks with same capacities, therefore the equilibrium points $\bar{r}$ for the ratio process also exists and is unique, and it satisfies
$$\bar{r}=\sum_{k\in \mathcal{K}}y_{K}(k)\bar{y}(\cdot|K=k)A^{(k)}.$$
\end{proof}

\begin{theorem}
The sequence of invariant probability distributions $\bar{r}^N$ of the ratio process $R^N_t$ converges weakly to the Dirac mass at $\bar{r}$ as $N\rightarrow \infty$.
\end{theorem}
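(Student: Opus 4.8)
The plan is to exploit the fact, already recorded in Section~\ref{Sec_Ratio}, that the ratio process is a fixed linear image of the capacity-augmented empirical process. Writing $\Phi(\tilde{y})=\sum_{k\in\mathcal{K}}\tilde{y}(\cdot,k)A^{(k)}$ for the (deterministic) map built from the matrices $A^{(k)}_{ij}=\mathbf{1}\{\floor*{\frac{iK_{\max}}{k}}=j\}$, we have $R^N_t=\Phi(\tilde{Y}^N_t)$ and $r_t=\Phi(\tilde{y}_t)$. Since $\Phi$ is linear on the finite-dimensional simplex it is continuous, so the whole statement should follow by combining the interchange-of-limits result for $\tilde{Y}^N$ with the continuous mapping theorem, rather than by a fresh generator/Lyapunov computation for $R^N$ directly.

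First I would establish the exact analogue of Theorem~\ref{converge} for the augmented empirical process $\tilde{Y}^N_t(n,k)$, namely that its sequence of invariant distributions $\bar{\tilde{y}}^N$ converges weakly to $\delta_{\bar{\tilde{y}}}$, where $\bar{\tilde{y}}$ is the equilibrium whose existence and uniqueness was shown in the preceding Proposition. This is a line-by-line transcription of the proof of Theorem~\ref{converge}: $\tilde{Y}^N$ is again a finite-state, irreducible CTMC empirical measure, its generator $Q_N$ converges uniformly to the degenerate generator $Q_\infty(F)(\tilde{y})=\langle\nabla F(\tilde{y}),b(\tilde{y})\rangle$ with $b$ given by \eqref{eqn:b_ratio}, the state space $\mathcal{P}$ over the finite index set $\{(n,k):k\in\mathcal{K},\,0\le n\le k\}$ is compact, and the Lyapunov function of Theorem~\ref{Lyapunov} (adapted to the coupling term $\gamma-\sum_{h\in\mathcal{K}}\sum_j j\tilde{y}(j,h)$) forces every subsequential limit of $\bar{\tilde{y}}^N$ to be the unique equilibrium. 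I would simply remark that none of these steps is sensitive to the extra capacity index, since $\mathcal{K}$ is finite and the only cross-capacity coupling enters through the scalar total-bike term.

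Next I would note that because $\Phi$ is deterministic, the stationary ratio process is obtained by pushing forward the stationary augmented empirical process: $\bar{r}^N=\Phi_*\bar{\tilde{y}}^N$. The continuous mapping theorem then gives, from $\bar{\tilde{y}}^N\Rightarrow\delta_{\bar{\tilde{y}}}$, that $\bar{r}^N=\Phi_*\bar{\tilde{y}}^N\Rightarrow\Phi_*\delta_{\bar{\tilde{y}}}=\delta_{\Phi(\bar{\tilde{y}})}=\delta_{\bar{r}}$, where the last identity is exactly the equilibrium formula $\bar{r}=\sum_{k\in\mathcal{K}}y_K(k)\bar{y}(\cdot|K=k)A^{(k)}$ of the preceding Proposition.

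The hard part will not be the analysis but a modeling subtlety: as the paper itself observes, $R^N_t$ is not autonomously Markov, since its jump rates depend on the underlying $X_i(t)$ in a complicated way. Consequently I must be careful about what ``invariant distribution $\bar{r}^N$'' means. The clean resolution is to define $\bar{r}^N$ as the stationary marginal of $R^N_t$ induced by initializing $\tilde{Y}^N_0\sim\bar{\tilde{y}}^N$ in its (unique) stationary regime; then $R^N_t=\Phi(\tilde{Y}^N_t)$ is a stationary process with marginal $\Phi_*\bar{\tilde{y}}^N$, and uniqueness of the invariant measure of the irreducible finite CTMC $\tilde{Y}^N$ makes $\bar{r}^N$ well defined. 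Once this bookkeeping is in place, the remaining argument is just the continuous mapping theorem, so I expect the only genuine obstacle to be justifying this pushforward identification of $\bar{r}^N$ carefully enough that the weak-convergence conclusion is unambiguous.
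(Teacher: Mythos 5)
Your proposal is correct and takes essentially the same route as the paper: the paper likewise deduces the result from the interchange-of-limits theorem for the capacity-augmented empirical process by writing $\bar{r}^N(j)=\sum_{k\in\mathcal{K}}\bar{y}^N(n^*(j,k),k)$ and invoking convergence in probability to a constant plus additivity, which is exactly your continuous-mapping step specialized to the linear map $\Phi$. If anything you are more careful on two points the paper leaves implicit---transcribing Theorem~\ref{converge} from the uniform-capacity process $Y^N$ to the augmented process $\tilde{Y}^N(n,k)$, and defining the ``invariant distribution'' of the non-Markov ratio process as the stationary pushforward $\Phi_*\bar{\tilde{y}}^N$---but these are refinements of the same argument, not a different one.
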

 
\begin{proof}
By Theorem \ref{converge}, the invariant probability distribution $\bar{y}^N$ of process $Y^{N}(t)$ converges weakly to the Dirac mass at $\bar{y}$ as $N\rightarrow \infty$. We know that the invariant probability distribution $\bar{r}^N$ of ratio process $R^N_t$ can be expressed as 
$$\bar{r}^N(j)=\sum_{k\in \mathcal{K}}\sum_{n=0}^{k}\bar{y}^{N}(n,k)\mathbf{1}\left\{\floor*{\frac{nK_{\max}}{k}}=j\right\},$$
and that $$\bar{r}(j)=\sum_{k\in \mathcal{K}}\sum_{n=0}^{k}\bar{y}(n,k)\mathbf{1}\left\{\floor*{\frac{nK_{\max}}{k}}=j\right\}$$
for $j=0,\cdots,K_{\max}$.

We also know that
$$\floor*{\frac{nK_{\max}}{k}}=j \Rightarrow j\leq \frac{nK_{\max}}{k}<j+1 \Rightarrow j\cdot \frac{k}{K_{\max}}\leq n< (j+1)\cdot \frac{k}{K_{\max}}.$$
Since $k/K_{\max}<1$, there exists at most one  $n^*(j,k)$ in $\{0,\cdots,k\}$ such that $\floor*{\frac{n^*(j,k)K_{\max}}{k}}=j$. Then
\begin{eqnarray}
\bar{r}^{N}(j)&=& \sum_{k\in \mathcal{K}}\bar{y}^{N}(n^*(j,k),k).
\end{eqnarray}
Since convergence in distribution to a constant implies convergence in probability, and by additivity of convergence in probability, we have
\begin{eqnarray}
\sum_{k\in \mathcal{K}}\bar{y}^{N}(n^*(j,k),k)\xrightarrow{p} \sum_{k\in \mathcal{K}}\bar{y}(n^*(j,k),k),
\end{eqnarray}
which implies
\begin{eqnarray}
\sum_{k\in \mathcal{K}}\bar{y}^{N}(n^*(j,k),k)\Rightarrow \sum_{k\in \mathcal{K}}\bar{y}(n^*(j,k),k).
\end{eqnarray}
This shows that the invariant measure $\bar{r}^N$ converges weakly to the Dirac mass at $\bar{r}$ as $N\rightarrow \infty$.
\end{proof}

\section{Insights from Numerical Examples and Simulation}\label{Sec_simulation}
In this section, we show the different aspects of the impact of information on the empirical measure. This includes how the proportion of users that have information (parameter $p$), different choice function $g$ (exponential, minimum choice functions) and users' sensitivity to information (parameters in $g$) affect the behavior of empirical measure over time. We divide the section into two subsections, where Section \ref{Sec_stationary} considers empirical measure under stationary arrivals, and Section \ref{Sec_nonstationary} considers empirical measure under non-stationary arrivals.

%
%
%
%
%
%
%


\subsection{Stationary Arrivals}\label{Sec_stationary}
In this section, we consider the bike sharing system under stationary arrivals. Common parameter values used in all numerical examples are $\lambda=1, \mu=1, K=20, \gamma=10$. We explore the impact of information on three different perspectives: parameter $p$, different choice function $g$, and parameters in each choice function $g$.
\subsubsection{Impact of proportion of users with information ($p$) to the empirical measure}
We first explore impact of parameter $p$, the proportion of user with information, on the empirical measure. Here we consider three different choice function: 1) exponential choice function $g(x)=e^{\theta x}$; 2) minimum choice function: $g(x)=\min(x,c)$; 3) polynomial choice function: $g(x)=x^{\alpha}$.

Another aspect we considered is the entropy of the empirical process. We define the \textbf{entropy} of the empirical measure as 
\begin{equation}
S^N(t)=-\sum_{i=0}^{K}Y^N_t(i)\log(Y^N_t(i)).
\end{equation}
In our model, our goal is to minimize the proportion of empty and full stations, thus we want the empirical measure  to be close to a mass at the average number of bikes per stations, that is, low entropy.

\paragraph{Exponential Choice Function $g(x)=e^{\theta x}$} 
We first explore the exponential choice function. This represent a high user sensitivity to information about bike availability, as the probability of choosing stations with more bikes grows exponentially.

Figure \ref{Fig_bike_dist_exp10} shows the empirical process at equilibrium for $p=0, 0.25, 0.5, 1$ and $\theta=1$. We observe that with no information ($p=0$) the distribution looks relatively flat. As you increase the value of $p$, i.e. the proportion of users with information, the distribution narrows down to be centered around its mean  quite  drastically. We also observe that the proportion of empty and full stations go close to 0 with only 25\% of user having information.

Figure \ref{Fig_y_p_theta} shows the surface plots of the mean field limit on the $p-\theta$ plane with $p\in [0,1]$ and $\theta\in [0,2]$. Particularly, we look at proportion of stations with 0,1,19 and 20 bikes. These plots show that proportion of nearly empty and full stations go down dramatically both when $p$ increase from 0 to 1 and $\theta$ increase from 0 to 2.

Figure \ref{Fig_entropy_p_theta} shows the surface plot of the entropy of the mean field limit on the $p-\theta$ plane. We observe that the entropy goes down as $p, \theta$ increase, which is consistent with the findings in Figures \ref{Fig_y_p_theta} and \ref{Fig_entropy_p_theta}.

\begin{figure}[h]
\centering
\includegraphics[width=0.75\textwidth]{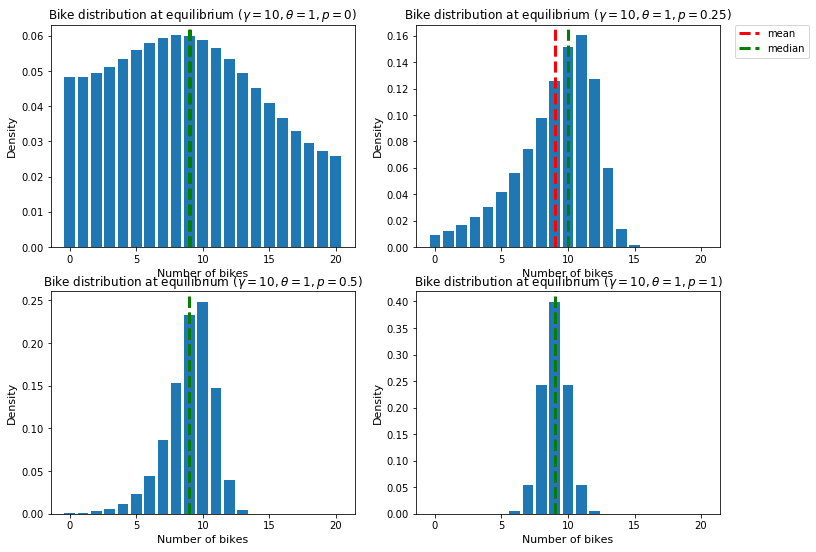}
\caption{Bike distribution for different values of $p$ when $\theta=2$ }\label{Fig_bike_dist_exp10}
\end{figure}

\begin{figure}[H]
\centering
\includegraphics[width=1\textwidth]{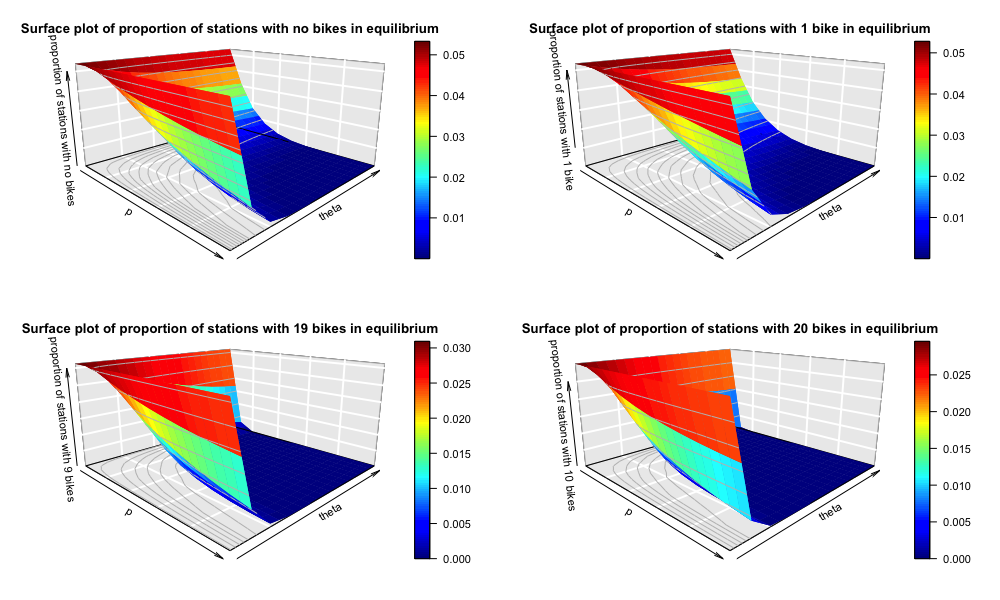}
\caption{Surface plot of $y_{\infty}(p,\theta)$ on the $p-\theta$ plane}\label{Fig_y_p_theta}
\end{figure}

\begin{figure}[H]
\centering
\includegraphics[width=0.8\textwidth]{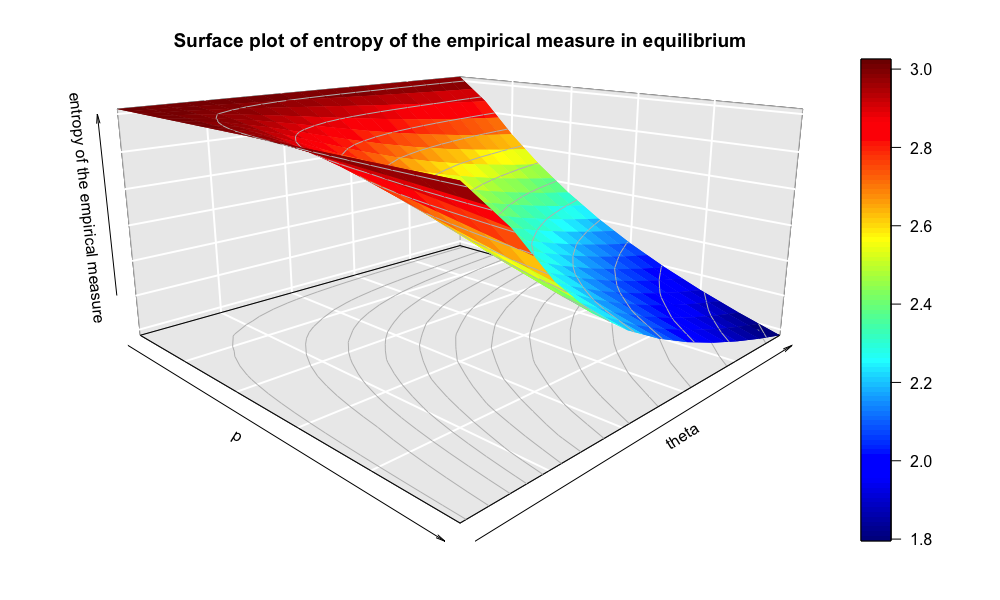}
\caption{Surface plot of the entropy of the empirical measure in equilibrium on the $p-\theta$ plane}\label{Fig_entropy_p_theta}
\end{figure}

\paragraph{Minimum Choice Function $g(x)=\min(x,c)$} 
We then explore the minimum choice function. This choice function represents that user sensitivity to information about bike availability has a threshold, as the probability of choosing stations with more bikes grows linearly but then remain the same once the threshold $c$ is reached.

Figure \ref{Fig_bike_dist_c5} shows the empirical process at equilibrium for $p=0, 0.25, 0.5, 1$ and $c=5$. We observe that with no information ($p=0$) the distribution looks relatively flat. As the value of $p$ increase to 1, i.e. the proportion of users with information, the proportion of stations with empty stations goes down to close to 0. However, we don't see much changes in terms of the proportion of full or nearly full stations. This agrees with the intuition that when there is  a threshold on the incentives the number of bikes available has on users, its effect on reducing full stations is diminished.

Figure \ref{Fig_y_p_c} shows the plots of the mean field limit vs. proportion of users with information ($p$) for different values of $c$. Particularly, we look at proportion of stations with 0,1,19 and 20 bikes. These plots show that proportion of empty stations go down both when $p$ increase from 0 to 1 and $c$ increase from 1 to 5. However, the proportion of full stations doesn't change much for different values of $p$ and $c$.

Figure \ref{Fig_entropy_p_c} shows the plot of the entropy of the mean field limit vs. proportion of users with information ($p$) for different values of $c$. We observe that the entropy goes down as $p, c$ increase, which is consistent with the findings in Figures \ref{Fig_y_p_c} and \ref{Fig_entropy_p_c}.

\begin{figure}[H]
\centering
\includegraphics[width=0.75\textwidth]{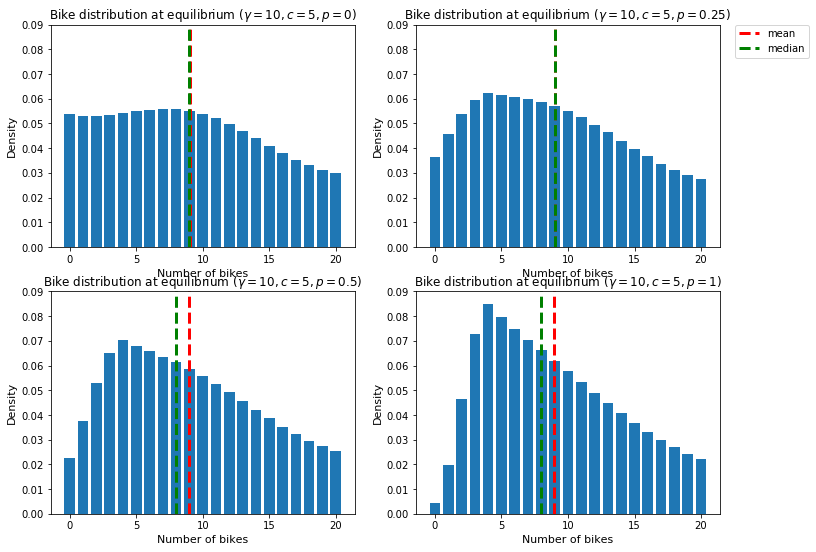}
\caption{Bike distribution for different values of $p$ when $K=20$, $\gamma=10$ and $c=5$}\label{Fig_bike_dist_c5}
\end{figure}

\begin{figure}[H]
\centering
\includegraphics[width=0.9\textwidth]{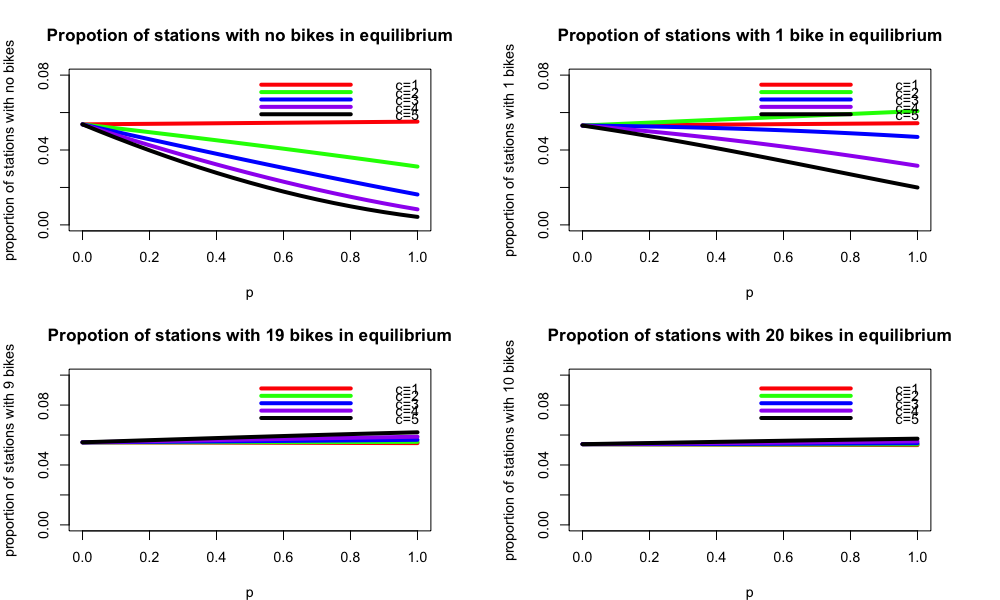}
\caption{Proportion of stations with no bikes, 1 bike, 19 bikes and 20 bikes for different value of $c$}\label{Fig_y_p_c}
\end{figure}

\begin{figure}[H]
\centering
\includegraphics[width=0.8\textwidth]{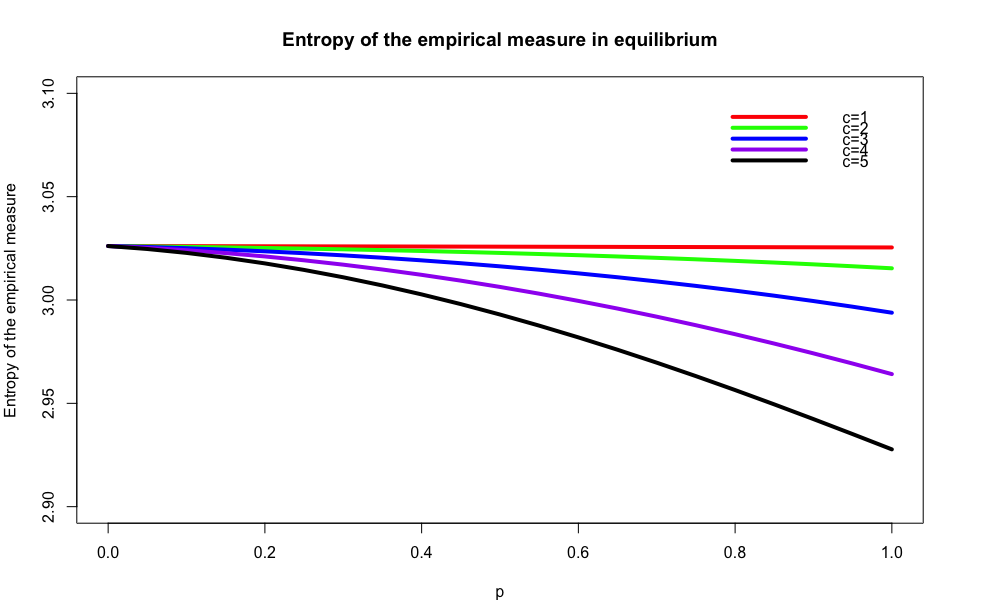}
\caption{Entropy of the empirical measure in equilibrium for different value of $c$}\label{Fig_entropy_p_c}
\end{figure}

\paragraph{Polynomial Choice Function $g(x)=x^{\alpha}$} 
Finally we then explore the polynomial choice function. This choice function represents a user sensitivity to information about bike availability that is in between the exponential choice function and the minimum choice function, as the probability of choosing stations with more bikes has a polynomial growth.

Figure \ref{Fig_bike_dist_alpha2} shows the empirical process at equilibrium for $p=0, 0.25, 0.5, 1$ and $\alpha=2$. We observe that with no information ($p=0$) the distribution looks relatively flat. As the value of $p$ increase to 1, i.e. the proportion of users with information, the proportion of empty and full stations goes down to close to 0. The effect of polynomial choice function is similar to that of exponential choice function, but less extreme.

Figure \ref{Fig_y_p_alpha} shows the surface plots of the mean field limit on the $p-\alpha$ plane with $p\in [0,1]$ and $\alpha\in [0,5]$. Particularly, we look at proportion of stations with 0,1,19 and 20 bikes. These plots show that proportion of nearly empty and full stations go down dramatically both when $p$ increase from 0 to 1 and $\alpha$ increase from 0 to 5. Its effect is similar to those from the exponential choice function.

Figure \ref{Fig_entropy_p_alpha} shows the surface plot of the entropy of the mean field limit on the $p-\alpha$ plane. We observe that the entropy goes down as $p, \theta$ increase, which is consistent with the findings in Figures \ref{Fig_y_p_alpha} and \ref{Fig_entropy_p_alpha}.

\begin{figure}[H]
\centering
\includegraphics[width=0.75\textwidth]{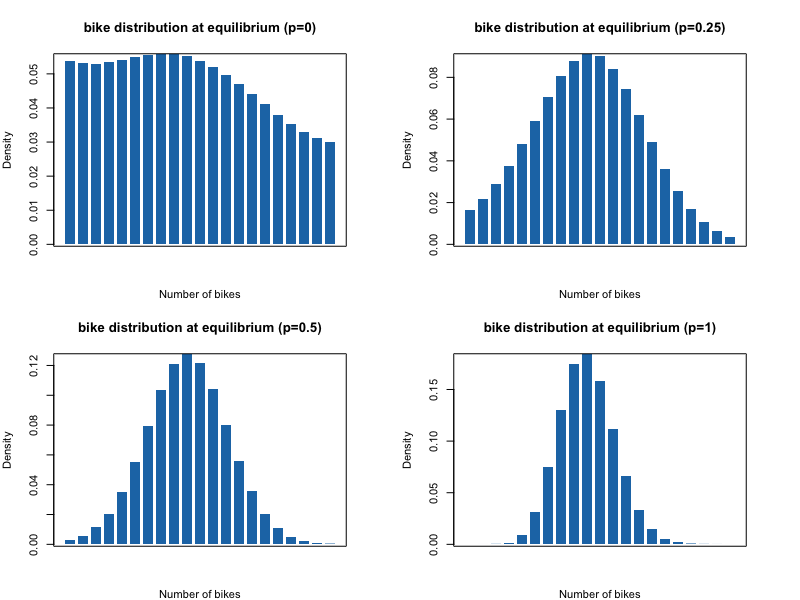}
\caption{Bike distribution for different values of $p$ when $K=20$, $\gamma=10$ and $\alpha=2$}\label{Fig_bike_dist_alpha2}
\end{figure}

\begin{figure}[H]
\centering
\includegraphics[width=0.8\textwidth]{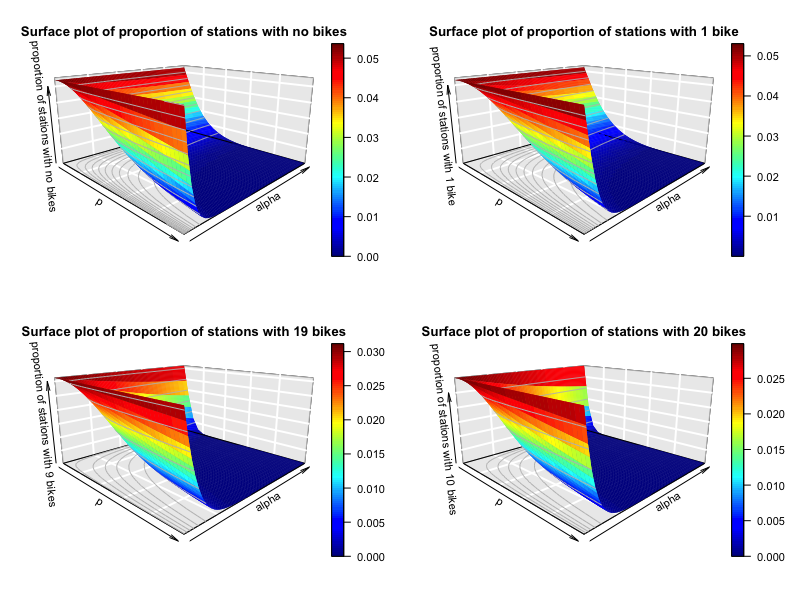}
\caption{Surface plot of $y_{\infty}(p,\alpha)$ on the $p-\alpha$ plane when $K=20$}\label{Fig_y_p_alpha}
\end{figure}

\begin{figure}[H]
\centering
\includegraphics[width=0.8\textwidth]{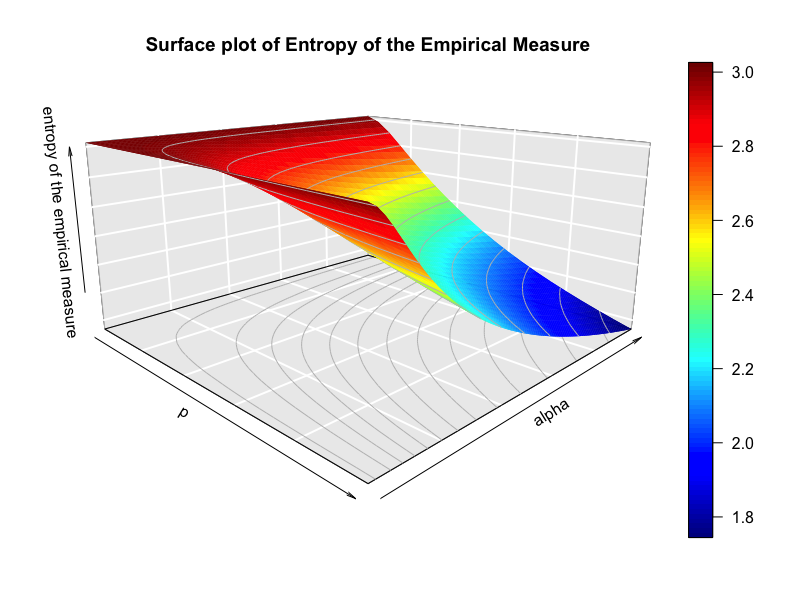}
\caption{Surface plot of the entropy of the empirical measure in equilibrium on the $p-\alpha$ plane when $K=20$}\label{Fig_entropy_p_alpha}
\end{figure}

\subsection{Non-stationary Arrivals}\label{Sec_nonstationary}

In this section, we consider the bike sharing system under non-stationary arrivals and provide numerical examples to analyze the effect of choice modeling. This is important because in real life bike sharing systems, arrival rates are often non-stationary and periodic.

\subsubsection{Fitting Arrival Rate Function}\label{fit_arrival}
Given the periodic property of the arrival rate function we observe from the historical data of CitiBike (see Figure \ref{Fig_arrival_regression}), we perform linear regression using the following Fourier basis,
\begin{equation}
\lambda (t) = \sum_{i=0}^{n} \beta_{1,j}\sin(2\pi t[j/\omega])+\beta_{2,j}\cos(2\pi t[j/\omega]),
\end{equation} 
where the unit of $t$ is hour and $\omega=24$ is the period of $\lambda(t)$.

Since the arrival rate patterns is quite different between weekdays and weekends, we will fit two separate regression on them.

\paragraph{Weekdays} 
The fitted result of $\lambda(t)$ during the weekdays for $n=5$ is the following,
\begin{eqnarray}\label{lambda_weekday}
\lambda(t)&=&-43.4\sin(2\pi t/24)-49.5\cos(2\pi t/24)-38.2\sin(4\pi t/24)-40.0\cos(4\pi t/24)\nonumber \\
& &+30.1\sin(6\pi t/24)+23.7\cos(6\pi t/24)+14.6\sin(8\pi t/24)-1.4\cos(8\pi t/24)\nonumber\\
& &-29.4\sin(10\pi t/24)+1.4\cos(10\pi t/24)+91.4
\end{eqnarray}

Table \ref{table:r^2_weekdays} shows the $R^2$ statistics for different choice of $n$. We can see that as we increase $n$, which indicates the complexity of function form, we get better approximation to the actual arrival rate function. With $n=5$ we obtain an $R^2$ statistics of 0.947, while the benefit of adding more complexity diminishes as $n$ increase from 5 to 10.

\begin{table}[ht] 
\centering 
\begin{tabular}{ *{11}{l} | c | r }
\hline
$n$ & 1 & 2 & 3 & 4 & 5 & 6 & 7 & 8 & 9 & 10\\ \hline
$R^2$ statistics & 0.41 & 0.70 & 0.84 & 0.86 & 0.947 & 0.955 & 0.962 & 0.972 & 0.973 & 0.975\\ 
\hline
\end{tabular}
\caption{$R^2$ statistics for different degree $n$ (weekdays)}\label{table:r^2_weekdays}
\end{table}

Figure \ref{Fig_arrival_regression} shows the actual vs. predicted arrival rate of customers (average number of trips during each 5 minutes) on weekdays from CitiBike data. We can see that the fitted results are very accurate in comparison to the real data.

\paragraph{Weekends}The fitted result of $\lambda(t)$ during the weekends for $n=2$ is the following,
\begin{eqnarray}\label{lambda_weekend}
\hat{\lambda}(t)&=&-43.8\sin(2\pi t/24)-39.5\cos(2\pi t/24)+6.0\sin(4\pi t/24)+6.7\cos(4\pi t/24)+58.6.\nonumber\\
\end{eqnarray}

Table \ref{table:r^2_weekends} shows the $R^2$ statistics for different degree $n$. Since the arrival rate pattern is much simpler during weekends than in weekdays, the approximation is much easier. With $n=1$ we already obtain an $R^2$ statistics of 0.955.

\begin{table}[ht]
\centering 
\begin{tabular}{ *{4}{l}| c | r }
\hline
$n$ & 1 & 2 & 3 \\ \hline
$R^2$ statistics & 0.955 & 0.977 & 0.987 \\ 
\hline
\end{tabular}
\caption{$R^2$ statistics for different degree $n$ (weekends)}\label{table:r^2_weekends} 
\end{table}

\begin{figure}[h]
\centering
\includegraphics[width=0.52\textwidth]{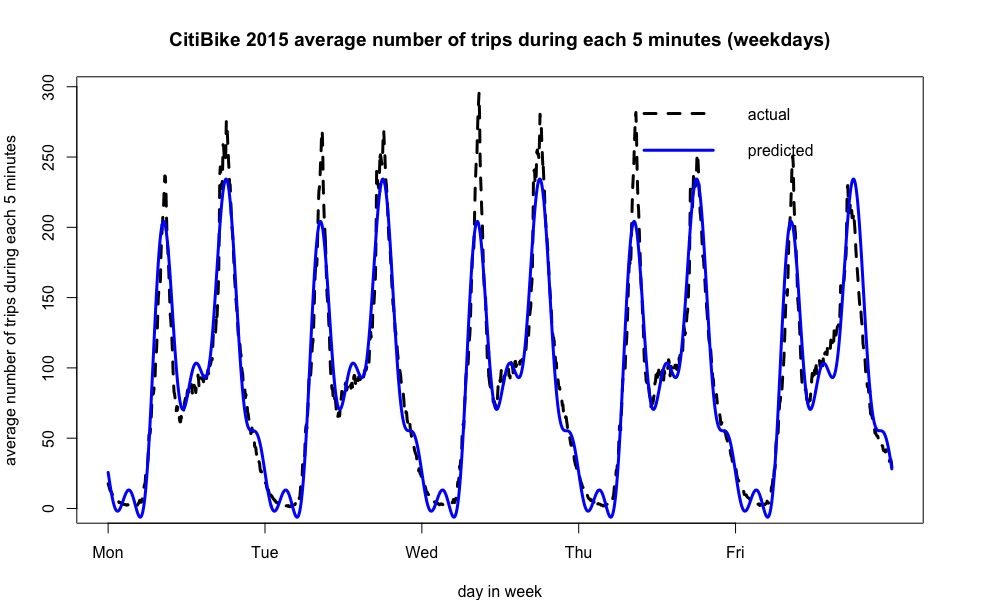}~
\includegraphics[width=0.52\textwidth]{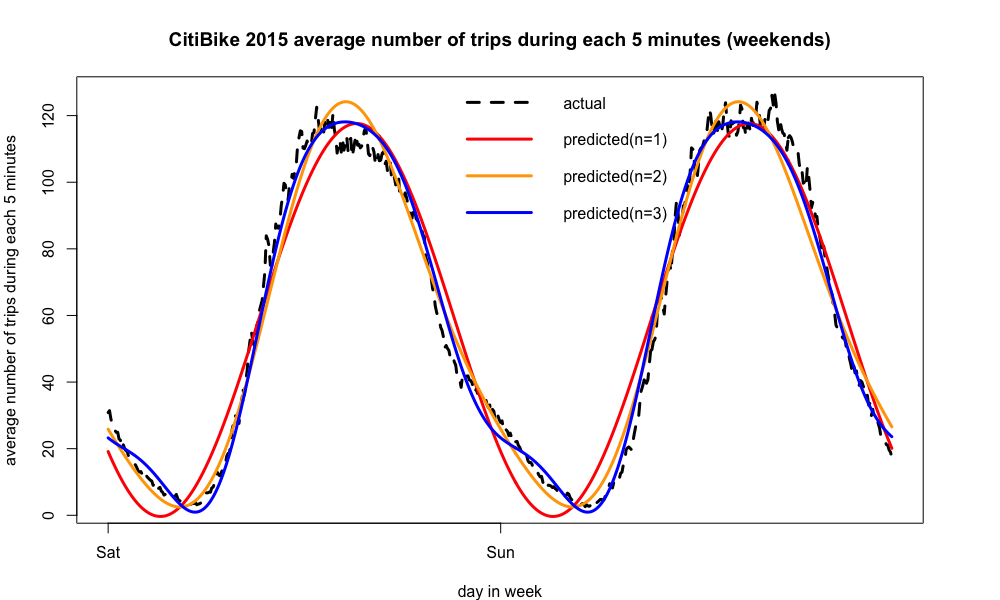}
\captionsetup{justification=centering}
\caption{Fitted Arrival Rate Function (weekdays (left) and weekends (right))}\label{Fig_arrival_regression}
\end{figure}

Figure \ref{Fig_arrival_regression} shows the actual vs. predicted arrival rate of customers (average number of trips during each 5 minutes) on weekends from CitiBike data. Again, we can see that the fitted results are very accurate in comparison to the real data for just $n=2$.

\subsubsection{Impact of giving customer information ($p$) to the empirical measure}
In the case of non-stationary arrivals, a figure is not enough to visualize the changes of bike distribution over time for us to analyze the impact of choice modeling. Thus, we create videos of the empirical measure process over time below.

We provide a video in this \href{https://cpb-us-e1.wpmucdn.com/blogs.cornell.edu/dist/3/7882/files/2020/04/bikevideo_exp.mp4}{link} the empirical measure over time under non-stationary arrival rates with exponential choice function for $p=0,0.25,0.5,1$. The arrival rates functions used in the video are from Equations (\ref{lambda_weekday}) and (\ref{lambda_weekend}) from Section \ref{fit_arrival}. At time 0, the distribution is initialized at a mass at 10 bikes per station. We observe from the video that at any given time, as $p$, the proportion of users with information, increase from 0 to 1, the proportion of empty and full stations decrease dramatically. This is consistent with our finding in Section \ref{Sec_stationary}. We also observe that, the empirical process where $p$ is larger is more centered around its mean, i.e. more stable through out the length of simulation, and the effect is getting bigger when $p$ increase from 0 to 1. This suggests that with the exponential choice function, more people having information about the system will make the system more stable (in terms of less empty/full stations).


We provide a video in this \href{https://cpb-us-e1.wpmucdn.com/blogs.cornell.edu/dist/3/7882/files/2020/04/bikevideo_min.mp4}{link} the empirical measure over time under non-stationary arrival rates with minimum choice function for $p=0,0.25,0.5,1$. The arrival rates function and the initial setting is the same as in previous video with exponential choice function. We observe that at any given time, as $p$ increase from 0 to 1, the proportion of empty stations decrease dramatically. However there is very little impact on full or nearly full stations. This is consistent with our finding in Section \ref{Sec_stationary}. We also observe that the empirical process with larger $p$ has less empty or nearly empty stations throughout the duration of the simulation, not not much difference in terms of full or nearly full stations. This suggests that the minimum choice function, more people having information about the system will give a better control on the proportion of empty or nearly empty stations, however it has very little impact on full or nearly full stations.


Now we look at a simpler example, where $K=3$ and $\lambda(t)=1+0.5\sin(t/2)$, and examine each individual component of the empirical process. That is,  we show the surface plots of the fluid limit of the proportion of stations with 0,1,2,3 bikes individually on the time-$p$ plane under non-stationary arrivals.  Here we use the exponential choice function as an example. Parameters are given as $\theta=2$ and initial distribution of bikes is $y(0)=(0.25,0.25,0.25,0.25)$.

In Figure \ref{Fig_y_p_t_NS}, we can see that as $p$ increases, the average proportion of empty stations goes down. However, the amplitude of the variation of the proportion of empty stations becomes larger, which means the system is more unstable. In terms of full stations, we see the opposite: both the average proportion of full stations and the amplitude of variation go down. This tells us that in the time-varying arrival case, we may not want to give customer too much information.

In Figure \ref{Fig_entropy_p_NS}, we show the surface plot of the entropy of the empirical process on the time-$p$ plane. We observe that the average entropy is decreasing as $p$ increase and we do not see huge changes in amplitude of the variation of entropy over time.

In summary, this example suggests that under the non-stationary arrivals setting, it might not always be better to give users more information. Whether giving people information will lower proportion of empty or full stations depends on the specific parameters setting, and policy makers of bike sharing companies should be aware of this. However, it is very easy to verify the benefit of giving users information using the mean field limit and diffusion limit results to give approximations to the mean and variance of empirical measure process under non-stationary setting.

\begin{figure}[H]
\centering
\includegraphics[width=1\textwidth]{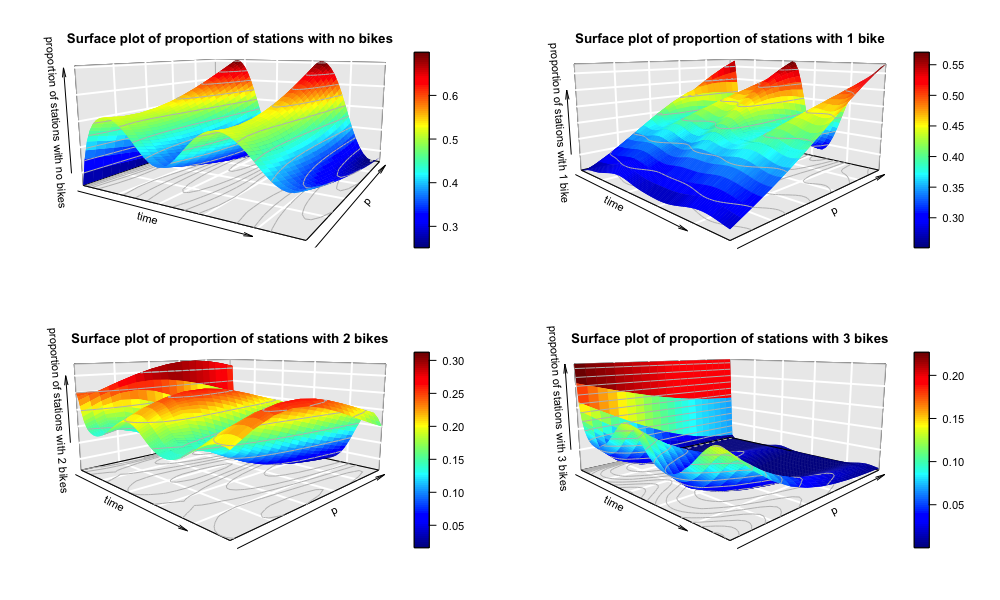}
\caption{Surface plot of $y(t,p)$ on the time-$p$ plane when $K=3$}\label{Fig_y_p_t_NS}
\end{figure}

\begin{figure}[H]
\centering
\includegraphics[width=0.8\textwidth]{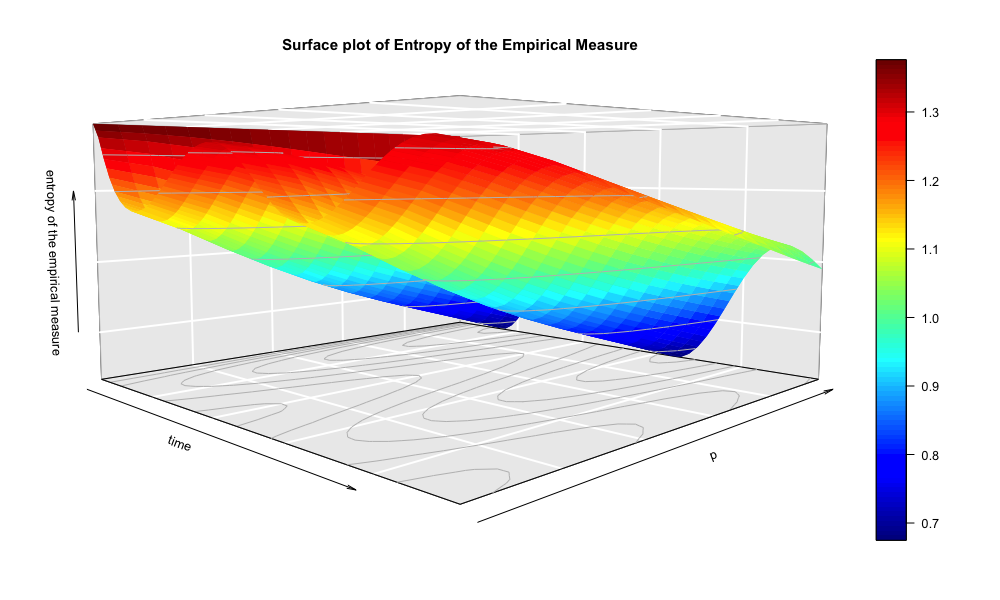}
\caption{Surface plot of entropy of the empirical measure on the time-$p$ plane when $K=3$}\label{Fig_entropy_p_NS}
\end{figure}

\subsubsection{Impact of user's sensitivity to information ($\theta$) to the empirical measure}
In this section, we explore the impact of user's sensitivity to information on the empirical measure process. Specifically, we consider the exponential choice function.

In Figure \ref{Fig_y_theta_t1_NS} we show the surface plots of the fluid limit on the time-$\theta$ plane, with $K=3$, $p=0.5$ and $\theta\in [0,5]$. The initial distribution of bikes is $y(0)=(0.25,0.25,0.25,0.25)$.
We observe that the the average proportion of empty stations slightly goes down, but the amplitude of the variation becomes larger. This suggests that as users become more sensitive to information, there exists a trade-off between system average and system variation under non-stationary setting in terms of controlling empty stations. Depending on the goal, it might not always be better when users are more sensitive to information.

In Figure \ref{Fig_entropy_theta_NS} we show the surface plot of the entropy of the empirical measure process on the time-$\theta$ plane. We observe that the average entropy and the amplitude of variation of the entropy is not monotone on $\theta$, indicating that there exists an optimal value of $\theta$ that minimize the mean (variance) of the entropy under the non-stationary setting.

In summary, this example suggests that under the non-stationary arrivals setting, it might not always be better when customers are more sensitive to information. Whether people being more sensitive to  information will lower proportion of empty or full stations depends on the specific parameters setting. There might be a trade-off between the system average performance and the system variation, in which case an optimal sensitivity parameter $\theta$ exists where the empirical measure achieves minimal entropy.

\begin{figure}[H]
\centering
\includegraphics[width=1\textwidth]{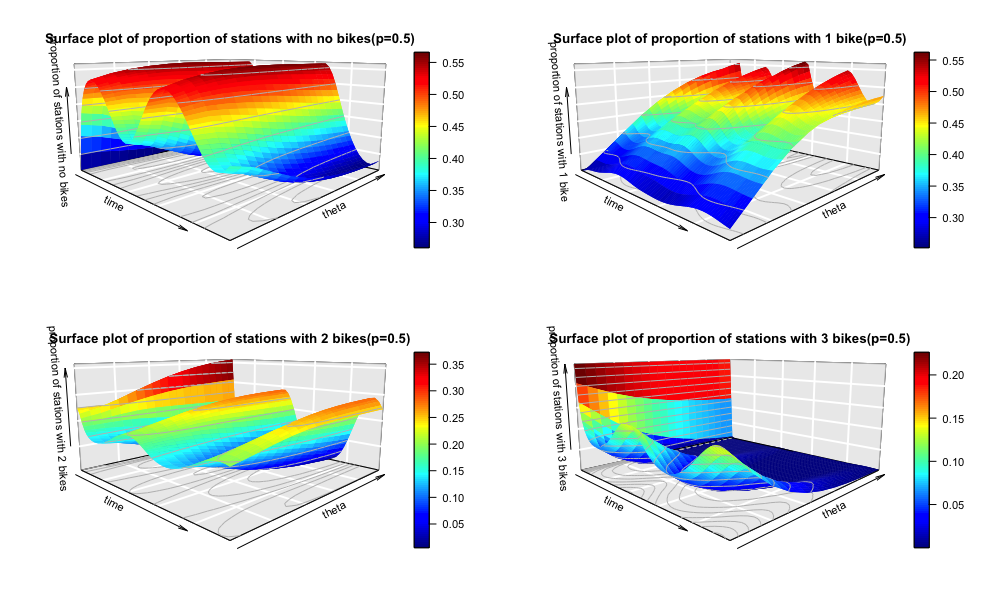}
\caption{Surface plot of $y_{(t,\theta)}$ on the time-$\theta$ plane when $K=3$ and $p=0.5$}\label{Fig_y_theta_t1_NS}
\end{figure}

\begin{figure}[H]
\centering
\includegraphics[width=0.8\textwidth]{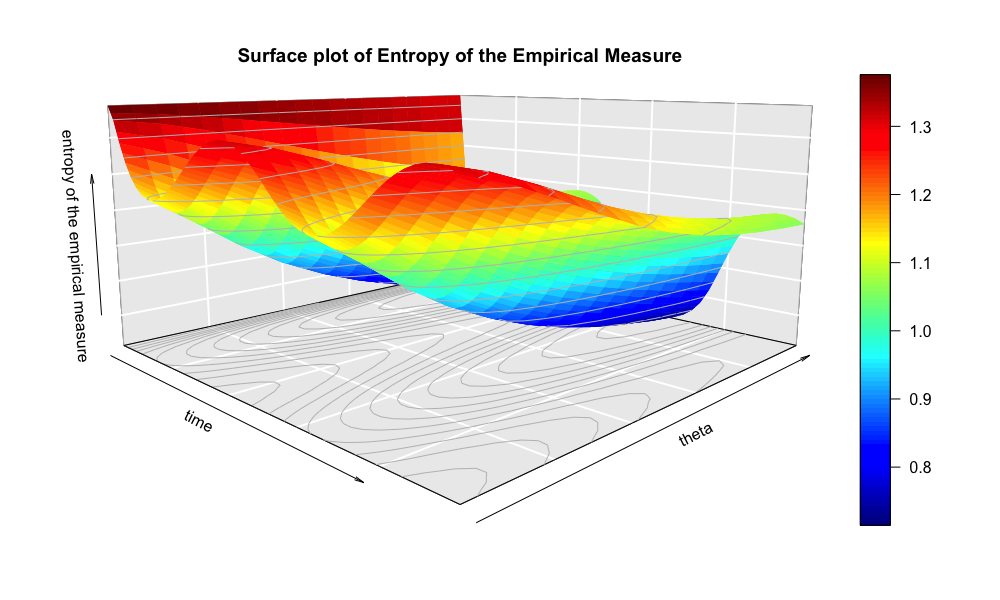}
\caption{Surface plot of entropy of the empirical measure on the time-$p$ plane when $K=3$}\label{Fig_entropy_theta_NS}
\end{figure}

\subsubsection{Impact of fleet size ($\gamma$) to the empirical measure}
In this section, we explore the impact of fleet size on the empirical measure process, as this is very importance from a system design and management point of view. 

Let $\gamma$ be the average number of bikes per station. We use the same parameter setting as in previous section, with $K=3, p=0.5$ and exponential choice function with $\theta=2$. The range of $\gamma$ is [0.5,2.5].  The initial distribution of bikes is $y(0)=(1-0.4\gamma,0,0.2\gamma,0.2\gamma)$.

In Figure \ref{Fig_y_s_t_NS} we show the surface plot of the fluid limit on the time-$\gamma$ plane, with $K=3$, $p=0.5$ and $\gamma\in [0.5,2.5]$. We observe that as fleet size (average number of bikes per station) increases, average proportion of empty stations goes down and there is not much change in the variation of proportion of empty stations over time. In terms of full stations, both the average proportion of full stations and the amplitude of its variation over time increase. This indicates that there exists an optimal fleet size $\gamma$ that minimize proportion of empty and full stations overall.

In Figure \ref{Fig_entropy_s_NS} we show the surface plot of the entropy of the empirical measure on the time-$\gamma$ plane. We observe that as fleet size increases, the average entropy increases first then slightly goes down, while the amplitude of the variation of entropy goes down. This indicates that there exists an optimal fleet size $\gamma$ that minimize the entropy of the empirical measure.

In summary, this example suggests that there exists an optimal value for fleet size that minimize the mean and variance of entropy of the empirical measure under the non-stationary setting. The optimal value can be easily numerically identified with the mean field limit results.

\begin{figure}[H]
\centering
\includegraphics[width=1\textwidth]{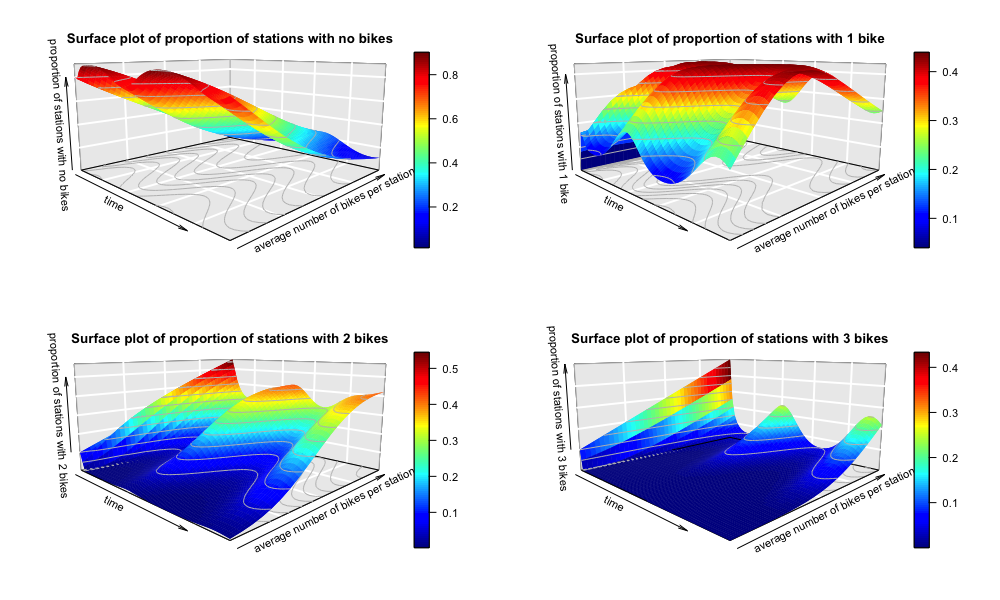}
\caption{Surface plot of $y_{(t,\gamma)}$ on the time-$\gamma$ plane when $K=3, p=0.5$ and $\theta=2$}\label{Fig_y_s_t_NS}
\end{figure}

\begin{figure}[H]
\centering
\includegraphics[width=0.8\textwidth]{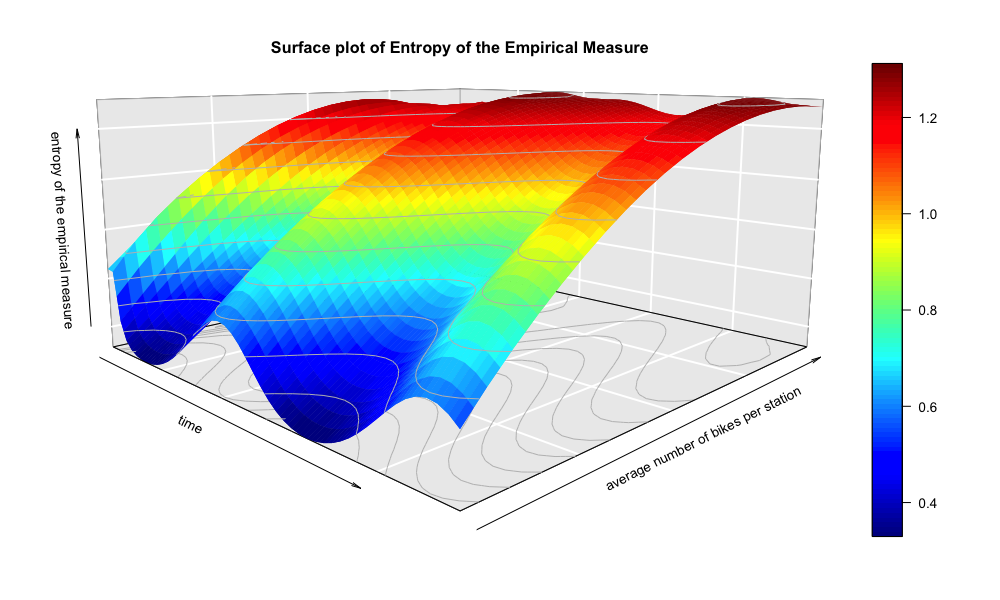}
\caption{Surface plot of entropy of the empirical measure on the time-$p$ plane when $K=3$}\label{Fig_entropy_s_NS}
\end{figure}

\section{Conclusion}\label{conc}

In this paper, we construct a stochastic queueing model with customer choice modeling for the empirical measure process of a finite-capacity bike sharing system. We consider a fraction $p$ of the customers having information to the bike availability of the system, which impacts their arrival rate to the stations through a choice function. 

First, we prove a mean field limit and a central limit theorem for an empirical process of the number of stations with $k$ bikes, when station capacities are the same across the  network. We also defined a new process called the ratio process in case when station capacities are different, and it characterizes the proportion of stations whose bike availability ratio lies within a particular partition of the interval [0,1]. We prove similar mean field limit and central limit theorem results for the ratio process. These asymptotic results provide great insights into the mean, variance and sample path dynamics of large scale bike sharing systems.

Second, we provide steady state analysis of the mean field limit of both the empirical measure process and the ratio process. We explicitly compute the equilibrium of the mean field limit and prove that interchanging of limits ($\lim_{t\rightarrow \infty}\lim_{N\rightarrow \infty}=\lim_{N\rightarrow \infty}\lim_{t\rightarrow \infty}$) holds for the mean field limit in equilibrium.

Finally, we give numerical examples to provide insights on how the fraction of customers with information ($p$), different choice models, customers' sensitivity to information and fleet size impact the system performance, under both stationary arrivals setting and non-stationary arrivals setting. Our examples show that generally, under stationary arrivals, the more customers with information, the better the system performance is. Under non-stationary settings, there might exists a trade-off between average system performance and its variation over time, and whether more information is more beneficial depends on the specific parameter setting. The form of choice models also plays role in how it impacts the system in terms of controlling empty and full stations.

Despite our analysis, there are many future directions for research.
\begin{enumerate}
\item The first direction is showing that the interchanging of limits holds for the diffusion limit as well. This requires proving exponential stability of the mean field limit.
\item We are also interested in constructing a stochastic queueing model for bike sharing systems that takes into account geographic location of stations. In this case we would consider customers arriving via a spatial Poisson process and would consider not only available bikes but also distance to the stations for picking up bikes.
\item Our model can also be adopted to study the effect of customer incentives or reward programs. These programs give riders the opportunity to move bikes from stations to stations, with the hope of improving the overall availability of bikes and docks across the network. For example, some major bike sharing systems such as CitiBike has so-called "Bike Angels", which is a reward program to riders who help move bikes from full stations to empty stations. This may potentially be a more cost-effective way to rebalancing than hiring employees to move bikes. See \citet{10.1145/3209811.3209866} for more discussion on the topic. In this case, the effect of "Bike Angels" can be modeled similarly to the choice model in this paper as follows,
\begin{itemize}
\item Rate of picking up bikes at station $i$:
\begin{eqnarray*}
Q(X_i(t),X_i(t)-1)&=&\left(\underbrace{(1-p)\lambda_{i}}_{\text{no information arrival}}+\underbrace {p(1-\alpha)\lambda_{i}N\frac{g(X_i(t))}{\sum_{j=1}^{N}g(X_{j}(t))}}_{\text{with information arrival}}\right.\nonumber\\
& &\left. +\underbrace {p\alpha\lambda_{i}N\frac{h(X_i(t))}{\sum_{j=1}^{N}h(X_{j}(t))}}_{\text{bike angles}}\right)\cdot \underbrace{\mathbf{1}\{X_{i}(t)>0\}}_{\text{station $i$ not empty}}
\end{eqnarray*}
\item Rate of dropping off bikes at station $i$ : 
\begin{eqnarray*}
Q(X_i(t),X_i(t)+1)&=& \underbrace{\mu}_{\text{service rate}} \cdot \left(\underbrace{P_{i}(1-p\alpha)}_{\text{routing probability}}+\underbrace{P_{i}p\alpha N\frac{h(-X_i(t))}{\sum_{j=1}^{N}h(-X_{j}(t))}}_{\text{routing probability for bike angles}}\right)\cdot\nonumber\\
& & \underbrace{\left(M-\sum_{k=1}^{N}X_{k}(t)\right)}_{\text{\# of bikes in circulation}}\cdot \underbrace{\mathbf{1}\{X_{i}(t)<K_{i}\}}_{\text{station $i$ not full}}
 \end{eqnarray*}
\end{itemize}
where $g,h$ are arbitrary functions that are non-decreasing
and non-negative.

\end{enumerate}
We intend to pursue these extensions in future work.


\section{Appendix}\label{App}
\begin{proof}[Proof of Theorem \ref{fluid_limit}]
The proof we give is obtained by Doob's inequality for martingales and Gronwall's lemma. We use Proposition \ref{bound}, Proposition \ref{Lipschitz}, and Proposition \ref{drift} in the proof, and they are stated after Theorem \ref{fluid_limit}.

Since $Y_{t}^{N}$ is a semi-martingale, we have the following decomposition of $Y_{t}^{N}$,
\begin{equation}\label{semiY}
Y_{t}^{N}=\underbrace{Y_0^{N}}_{\text{initial condition}}+\underbrace{M_t^{N}}_{\text{martingale}}+\int_{0}^{t}\underbrace{\beta(Y_s^{N})}_{\text{drift term}}ds
\end{equation}
where
$Y_{0}^{N}$ is the initial condition and $M_{t}^{N}$ is a independent family of martingales.  Moreover, $\int_{0}^{t}\beta(Y_s^{N})ds$ is the integral of the drift term where the drift term is given by $\beta: [0,1]^{K+1}\rightarrow \mathbb{R}^{K+1}$ or
\begin{eqnarray}
\beta(y)&=&\sum_{x\neq y}(x-y)Q(y,x)\nonumber\\
&=& \sum_{n=0}^{K}\left[ \left((1-p)+p\frac{g(n)}{\sum_{j=0}^{K}y_{j}g(j)}\right)(\mathbf{1}_{n-1}-\mathbf{1}_{n})\lambda\mathbf{1}_{n>0}\right.\nonumber\\
& &\left.+\left(\frac{M}{N}-\sum_{j=0}^{K} jy_{j} \right)(\mathbf{1}_{n+1}-\mathbf{1}_{n})\mathbf{1}_{n<K}\right] y_n.
\end{eqnarray}

We want to compare the empirical measure $Y_{t}^{N}$ with the mean field limit $y_{t}$ defined by
\begin{equation}
y_t=y_0+\int_{0}^{t}b(y_s)ds.
\end{equation}

Let $|\cdot|$ denote the Euclidean norm in $\mathbb{R}^{K+1}$, then
\begin{eqnarray}
\left|Y_{t}^{N}-y_{t}\right|&=&\left|Y_{0}^{N}+M_{t}^{N}+\int_{0}^{t}\beta
(Y_{s}^{N})ds-y_0-\int_{0}^{t}b(y_{s})ds\right|\nonumber \\
& =& \left|Y_{0}^{N}-y_0+M_{s}^{N}+\int_{0}^{t}\left(\beta
(Y_{s}^{N})-b(Y_{s}^{N})\right)ds+\int_{0}^{t}(b(Y_{s}^{N})-b(y_{s}))ds\right|.\nonumber \\
\end{eqnarray}
Now define the random function $f^{N}(t)=\sup_{s\leq t}\left|Y_s^{N}-y_s\right|$, we have
$$f^{N}(t)\leq |Y_0^{N}-y_0|+\sup_{s\leq t}|M_s^{N}|+\int_0^t|\beta(Y_s^{N})-b(Y_s^{N})|ds+\int_{0}^{t}|b(Y_s^{N})-b(y_s)|ds.$$
By Proposition \ref{Lipschitz}, $b(y)$ is Lipschitz with respect to Euclidean norm. Let $L$ be the Lipschitz constant of $b(y)$, then
\begin{eqnarray}
f^{N}(t)&\leq& |Y_0^{N}-y_0|+\sup_{s\leq t}|M_s^{N}|+\int_0^t|\beta(Y_s^{N})-b(Y_s^{N})|ds+\int_{0}^{t}|b(Y_s^{N})-b(y_s)|ds \nonumber \\
&\leq& |Y_0^{N}-y_0|+\sup_{s\leq t}|M_s^{N}|+\int_0^t|\beta(Y_s^{N})-b(Y_s^{N})|ds+L\int_{0}^{t}|Y_s^{N}-y_s|ds \nonumber \\
&\leq& |Y_0^{N}-y_0|+\sup_{s\leq t}|M_s^{N}|+\int_0^t|\beta(Y_s^{N})-b(Y_s^{N})|ds+L\int_{0}^{t}f^{N}(s)ds.
\end{eqnarray}

By Gronwall's lemma (See \citet{ames1997inequalities}), 
\begin{equation}
f^{N}(t) \leq \left(|Y_0^{N}-y_0|+\sup_{s\leq t}|M_s^{N}|+\int_0^t|\beta(Y_s^{N})-b(Y_s^{N})|ds\right)e^{Lt}.
\end{equation}

Now  to bound $f^{N}(t)$ term by term, we define function $\alpha: [0,1]^{K+1}\rightarrow \mathbb{R}^{K+1}$ as
\begin{eqnarray}
\alpha(y)&=&\sum_{x\neq y}|x-y|^2Q(y,x)\nonumber\\
&=&\frac{1}{N}\sum_{n}\sum_{r}\left[\frac{1}{rNR_{\max}}(\mathbf{1}_{(r,n-1)}+\mathbf{1}_{(r,n)})\mathbf{1}_{n>0}  \right.\nonumber \\& &+
\left. \left(\frac{M}{N}-\sum_{n'}\sum_{r'}n'y(r',n')\right)(\mathbf{1}_{(r,n+1)}+\mathbf{1}_{(r,n)})\mathbf{1}_{n<K} \right] \cdot y(r,n) 
\end{eqnarray}

and consider  the following four sets 
\begin{eqnarray}
\Omega_0 &=& \{|Y_0^{N}-y_0|\leq \delta \}, \\
\Omega_1 &=& \left\{\int_0^{t_0}|\beta(Y_s^{N})-b(Y_s^{N})|ds\leq \delta \right\}, \\
\Omega_2 &=& \left\{\int_0^{t_0}\alpha(Y_t^{N})dt \leq A(N)t_0 \right\}, \\
\Omega_3 &=& \left\{\sup_{t\leq t_0}|M_t^{N}|\leq \delta \right\} ,
\end{eqnarray}
where $\delta=\epsilon e^{-Lt_0}/3$. Here the set $\Omega_{1}$ is to bound the initial condition, the set $\Omega_{2}$ is to bound the drift term $\beta$ and the limit of drift term $b$, and  the sets $\Omega_{2},\Omega_{3}$ are to bound the martingale $M_{t}^{N}$. Therefore on the event $\Omega_0\cap \Omega_1\cap \Omega_3$,
\begin{equation}\label{e}
f^{N}(t_0)\leq 3\delta e^{Lt_0}=\epsilon.
\end{equation}

Since $\lim_{N\rightarrow \infty}\frac{M}{N}=\gamma$ and $\lim_{N\rightarrow \infty}NR_{\max}=\frac{1}{\Lambda}$, we can
choose large enough $N$ such that
$$\frac{M}{N}\leq 2\gamma,\quad NR_{\max}\geq \frac{1}{2\Lambda}.$$
Thus
\begin{eqnarray}
\alpha(y)&=&\sum_{x\neq y}|x-y|^2Q(y,x)\nonumber\\
&=& \frac{1}{N}\sum_{n=0}^{K}\left[ \left((1-p)+p\frac{g(n)}{\sum_{j=0}^{K}y_{j}g(j)}\right)(\mathbf{1}_{n-1}+\mathbf{1}_{n})\lambda\mathbf{1}_{n>0}\right.\nonumber\\
& &\left.+\left(\frac{M}{N}-\sum_{j=0}^{K} jy_{j} \right)(\mathbf{1}_{n+1}+\mathbf{1}_{n})\mathbf{1}_{n<K}\right] y_n\nonumber \\
&\leq & \frac{1}{N}\sum_{n=0}^{K}\left[ \left((1-p)+p\frac{C_{\max}}{C_{\min}}\right)\cdot 2\lambda+2\gamma\cdot 2\right] y_n\nonumber \\
&=&\frac{2}{N}\left[ \left((1-p)+p\frac{C_{\max}}{C_{\min}}\right)\lambda+2\gamma\right] \nonumber \\
&\sim & O(\frac{1}{N}).
\end{eqnarray}
Consider the stopping time $$T=t_0\wedge \inf \left\{t\geq 0:\int _{0}^{t}\alpha(Y_s^{N})ds>A(N)t_0\right\},$$
by Proposition \ref{bound},
$$\mathbb{E}\left(\sup_{t\leq T}|M_{t}^{N}|^2\right)\leq 4\mathbb{E}\int_{0}^{T}\alpha(Y_{t}^{N})dt\leq 4A(N)t_0. $$
On $\Omega_2$, we have $T =t_0$, so 
$\Omega_2 \cap \Omega_3^{c}\subset \{\sup_{t\leq T}|M^{N}_t|>\delta\}$. By Chebyshev's inequality, 
\begin{equation}
\mathbb{P}(\Omega_2 \cap \Omega_3^{c})\leq \mathbb{P}\left(\sup_{t\leq T}|M^{N}_t|>\delta\right)\leq \frac{\mathbb{E}\left(\sup_{t\leq T}|M_{t}^{N}|^2\right)}{\delta^2}\leq 4A(N)t_0/\delta^2.
\end{equation}
Thus by Equation (\ref{e}), we have the following result,
\begin{equation}
\begin{split}
\mathbb{P}\left(\sup_{t\leq t_0}|Y_t^{N}-y_t|>\epsilon\right)&\leq \mathbb{P}(\Omega_0^c\cup \Omega_1^c\cup \Omega_3^c)\\
&\leq \mathbb{P}(\Omega_2 \cap \Omega_3^{c})+\mathbb{P}(\Omega_0^{c}\cup \Omega_1^{c}\cup \Omega_2^{c})\\
&\leq 4A(N)t_0/\delta^2+\mathbb{P}(\Omega_0^{c}\cup \Omega_1^{c}\cup \Omega_2^{c})\\
&=36A(N)t_0 e^{2Lt_0}/\epsilon^2+\mathbb{P}(\Omega_0^{c}\cup \Omega_1^{c}\cup \Omega_2^{c}).
\end{split}
\end{equation}
Let $A(N)=\frac{4(C+\gamma)}{N}$, then $\Omega_{2}^{c}=\emptyset$.
And since $Y_0^N\xrightarrow{p} y_0$,   $\lim_{N\rightarrow \infty}\mathbb{P}(\Omega_{2}^{c})=0$. Therefore we have $$\lim_{N\rightarrow \infty}\mathbb{P}\left(\sup_{t\leq t_0}|Y_t^{N}-y_t|>\epsilon\right)=\lim_{N\rightarrow\infty}\mathbb{P}(\Omega_{1}^{c}) .$$
By Proposition \ref{drift}, $\lim_{N\rightarrow\infty}\mathbb{P}(\Omega_{1}^{c})=0$.
Thus, we proved the final result
$$\lim_{N\rightarrow \infty}\mathbb{P}\left(\sup_{t\leq t_0}|Y_t^{N}-y_t|>\epsilon\right)=0.$$\
\end{proof}

\begin{proposition}[Bounding martingales]\label{bound}
For any stopping time $T$ such that $\mathbb{E}(T)<\infty$, we have
\begin{equation}
\mathbb{E}\left(\sup_{t\leq T}|M_{t}^{N}|^2\right)\leq 4\mathbb{E}\int_{0}^{T}\alpha(Y_{t}^{N})dt.
\end{equation}  
\begin{proof}
Let $\tilde{\mu}$ be the jump measure of $Y_{t}^{N}$, and $\nu$ be its compensator, defined on $(0,\infty)\times [0,1]$  by 
\begin{eqnarray}
\tilde{\mu}=\sum_{t:Y_{t}^{N}\neq Y_{t-}^{N}}\delta(t,Y_{t}^{N}), & & \nu(dt,B)=Q(Y_{t-}^{N},B)dt \quad \forall B\in \mathcal{B}([0,1]).
\end{eqnarray}
Let $\tilde{Y}_{m,t}^{N}$ be the jump chain of $Y_{t}^{N}$, $J_{m}$ be the jump time, then we have for any $t\in [0,\infty)$, $J_{n}\leq t<J_{n+1}$ for some $n\geq 0$.
The martingale $M_{t}^{N}$ can be written as
\begin{eqnarray}
M_{t}^{N}&=&Y^{N}_{t}-Y_{0}^{N}-\int_{0}^{t}\beta(Y_{s}^{N})ds \nonumber  \\
&=&\sum_{m=0}^{n-1}(\tilde{Y}^{N}_{m+1}-\tilde{Y}^{N}_{m})-\int_{0}^{t}\int_{0}^{1}(y-Y_{s-}^{N})Q(Y_{s-}^{N},dy)ds\nonumber  \\
&=&\int_{0}^{t}\int_{0}^{1}(y-Y_{s-}^{N})\tilde{\mu}(ds,dy)-\int_{0}^{t}\int_{0}^{1}(y-Y_{s-}^{N})\nu(ds,dy)\nonumber  \\
&=&\int_{0}^{t}\int_{0}^{1}(y-Y_{s-}^{N})(\tilde{\mu}-\nu)(ds,dy).
\end{eqnarray}
Note the following identity
\begin{eqnarray}
\left(M_{t}^{N}\right)^2&=&2\int_{0}^{t}\int_{0}^{1}M_{s-}(y-Y_{s-}^{N})(\tilde{\mu}-\nu)(ds,dy)+\int_{0}^{t}\int_{0}^{1}(y-Y_{s-}^{N})^{2}\tilde{\mu}(ds,dy).
\end{eqnarray}
This can be established by verifying that the jumps of the left and right hand sides agree, and that their derivatives agree between jump times. Then we can write
\begin{equation}\label{martingale}
\left(M_{t}^{N}\right)^2=N_{t}^{N}+\int_{0}^{t}\alpha(Y_{t}^{N})ds
\end{equation}
where
\begin{equation}
N_{t}^{N}=\int_{0}^{t}\int_{0}^{1}H(s,y)(\tilde{\mu}-\nu)(ds,dy),
\end{equation}
and
\begin{equation}
H(s,y)=2M_{s-}(y-Y_{s-}^{N})+(y-Y_{s-}^{N})^{2}.
\end{equation}
Consider the previsible process
\begin{equation}
H_{2}(t,y)=H(t,y)\mathbf{1}_{\{t\leq T\wedge T_{n}\}}
\end{equation}
where $T_{n}=\inf\{t\geq 0: \beta(Y_{t}^{N})>n\}\wedge n$. Then
\begin{equation}
N_{T\wedge T_{n}}=\int_{0}^{\infty}\int_{0}^{1}H_{2}(t,y)(\tilde{\mu}-\nu)(dt,dy),
\end{equation}
and
\begin{eqnarray}
\mathbb{E}\int_{0}^{\infty}\int_{0}^{1}|H_{2}(s,y)|\nu(ds,dy)&=&\mathbb{E}\int_{0}^{T\wedge T_{n}}\int_{0}^{1}|2M_{s-}(y-Y_{s-}^{N})+(y-Y_{s-}^{N})^{2}|\nu(ds,dy) \nonumber \\
&= & \mathbb{E}\int_{0}^{T\wedge T_{n}}\left(2|M_{t}^{N}|\beta(Y_{t}^{N})+\alpha(Y_{t}^{N})\right)dt \nonumber \\
& \leq & \mathbb{E}\int_{0}^{T\wedge T_{n}}2(2+n^2)ndt+\mathbb{E}\int_{0}^{T}\alpha(Y_{t}^{N})dt \nonumber \\
&\leq &2n^4+4n^2+\frac{4(C+\gamma)}{N}\mathbb{E}(T)<\infty.
\end{eqnarray}
By Theorem 8.4 in \citet{darling2008differential}, we have that $N^{T\wedge T_{n}}$ is a martingale. Replace $t$ by $T\wedge T_{n}$ in Equation~(\ref{martingale}) and take expectation to obtain
\begin{equation}
\mathbb{E}(|M_{T\wedge T_{n}}|^2)=\mathbb{E}(N_{T\wedge T_{n}})+\mathbb{E}\int_{0}^{T\wedge T_{n}}\alpha(Y_{t}^{N})dt.
\end{equation}
Since $N^{T\wedge T_{n}}$ is a martingale, 
\begin{equation}
\mathbb{E}(N_{T\wedge T_{n}})=\mathbb{E}(N_0)=0.
\end{equation}
Apply Doob's $L^2$-inequality to the martingale $M^{T\wedge T_{n}}$ to obtain
\begin{eqnarray}
\mathbb{E}\left(\sup_{t\leq T\wedge T_{n} }|M_{t}|^2\right)&\leq& 4\mathbb{E}(|M_{T\wedge T_{n}}|^2)\nonumber \\
&=& 4\mathbb{E}\int_{0}^{T\wedge T_{n}}\alpha(Y_{t}^{N})dt.
\end{eqnarray}
\end{proof}
\end{proposition}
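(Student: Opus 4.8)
The plan is to identify $\int_0^t\alpha(Y_s^N)\,ds$ as the predictable quadratic variation (the angle bracket) of the martingale $M_t^N$ and then apply Doob's $L^2$ maximal inequality; since $\alpha(y)=\sum_{x\neq y}|x-y|^2 Q(y,x)$ is exactly the instantaneous rate at which the square of $M^N$ accumulates, the factor $4$ in the bound will come directly from Doob's inequality once integrability is secured.

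First I would represent $M_t^N$ as a compensated jump integral. Writing $\tilde\mu$ for the jump measure of $Y^N$ and $\nu(ds,dy)=Q(Y_{s-}^N,dy)\,ds$ for its compensator, the semimartingale decomposition in Equation (\ref{semiY}) gives
$$M_t^N=\int_0^t\int (y-Y_{s-}^N)(\tilde\mu-\nu)(ds,dy).$$
Squaring and applying the integration-by-parts formula for pure-jump processes, I expect the identity
$$(M_t^N)^2=N_t^N+\int_0^t\alpha(Y_s^N)\,ds,$$
where $N_t^N$ is the stochastic integral of $H(s,y)=2M_{s-}^N(y-Y_{s-}^N)+(y-Y_{s-}^N)^2$ against the compensated measure $\tilde\mu-\nu$. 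Verifying this identity amounts to checking that both sides have matching jumps and matching (absolutely continuous) parts between jump times, which is routine for a finite-activity jump process.

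Second, I would establish that $N^N$ is a genuine martingale rather than merely a local one, since this is what forces $\mathbb{E}(N_T^N)=\mathbb{E}(N_0^N)=0$. Because the integrand $H$ involves both $M_{s-}^N$ and the drift $\beta(Y^N)$, which are not a priori bounded, I would localize using the stopping times $T_n=\inf\{t\ge 0:\beta(Y_t^N)>n\}\wedge n$. On $[0,T\wedge T_n]$ the process $M^N$ and the jump sizes are controlled, so $\mathbb{E}\int\!\int|H|\,d\nu<\infty$ and a standard criterion (e.g. Theorem 8.4 of \citet{darling2008differential}) shows that $N^{T\wedge T_n}$ is a true martingale. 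Taking expectations in the quadratic-variation identity at the stopped time then yields $\mathbb{E}(|M_{T\wedge T_n}^N|^2)=\mathbb{E}\int_0^{T\wedge T_n}\alpha(Y_s^N)\,ds$.

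Finally, I would apply Doob's $L^2$ maximal inequality to the stopped martingale $M^{T\wedge T_n}$, obtaining
$$\mathbb{E}\Big(\sup_{t\le T\wedge T_n}|M_t^N|^2\Big)\le 4\,\mathbb{E}(|M_{T\wedge T_n}^N|^2)=4\,\mathbb{E}\int_0^{T\wedge T_n}\alpha(Y_s^N)\,ds\le 4\,\mathbb{E}\int_0^{T}\alpha(Y_s^N)\,ds,$$
where the last inequality uses $\alpha\ge 0$, and then let $n\to\infty$, invoking monotone convergence on the left to recover the claimed bound. The main obstacle is the second step: combining the integration-by-parts identity with the localization argument to upgrade $N^N$ from a local to a true martingale, since without uniform control on $M_{s-}^N$ and $\beta(Y^N)$ one cannot directly conclude that the compensated integral has zero mean.
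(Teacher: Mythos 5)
Your proposal is correct and follows essentially the same route as the paper's proof: the same compensated-jump-integral representation $M_t^N=\int_0^t\int(y-Y_{s-}^N)(\tilde\mu-\nu)(ds,dy)$, the same integration-by-parts identity $(M_t^N)^2=N_t^N+\int_0^t\alpha(Y_s^N)\,ds$, the same localization via $T_n=\inf\{t\ge 0:\beta(Y_t^N)>n\}\wedge n$ together with the integrability criterion of Theorem 8.4 in \citet{darling2008differential} to make $N^{T\wedge T_n}$ a true martingale, and the same concluding application of Doob's $L^2$ inequality to the stopped martingale. If anything, your explicit final step of letting $n\to\infty$ by monotone convergence (using $\alpha\ge 0$ to bound $\int_0^{T\wedge T_n}$ by $\int_0^T$) spells out a limit that the paper leaves implicit after its stopped-time bound.
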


\begin{proposition}[Asymptotic Drift is Lipschitz]\label{Lipschitz}
The drift function $b(y)$ given in Equation (\ref{eqn:b}) is a Lipschitz function with respect to the Euclidean norm in $\mathbb{R}^{K+1}$. 
\begin{proof}
Denote $|\cdot|$ the Euclidean norm in $\mathbb{R}^{K+1}$.  Consider $y,\tilde{y}\in [0,1]^{K+1}$,
\begin{eqnarray}
|b(y)-b(\tilde{y})|&\leq& 2\lambda(1-p)|y-\tilde{y}|+\lambda p\left(\sum_{k=0}^{K}\left|\frac{y_k g(k)}{\sum_{j=0}^{K}y_{j}g(j)}-\frac{\tilde{y}_k g(k)}{\sum_{j=0}^{K}\tilde{y}_{j}g(j)}\right|^2\right)^{1/2} +2\gamma |y-\tilde{y}|\nonumber \\
&\leq & 2(\lambda(1-p)+\gamma)|y-\tilde{y}|+\lambda p \left(\sum_{k=0}^{K}\left(\frac{g(k)(y_{k}-\tilde{y}_{k})}{\min_{0\leq j\leq K}g(j)}\right)^2\right)^{1/2}.
\end{eqnarray}
Since $\{g_j\}$ are non-negative, there exists constants $C_{\min}, C_{\max}>0$ such that
\begin{equation}
C_{\min}\leq \min_{0\leq j\leq K}g(j)\leq \max_{0\leq j\leq K}g(j)\leq C_{\max}.
\end{equation}
Then
\begin{eqnarray}
|b(y)-b(\tilde{y})|&\leq& 2(\lambda(1-p)+\gamma)|y-\tilde{y}|+\lambda p \frac{C_{\max}}{C_{\min}}\left(\sum_{k=0}^{K}\left(y_{k}-\tilde{y}_{k}\right)^2\right)^{1/2}\nonumber\\
&=& 2\left(\lambda(1-p)+\lambda p  \frac{C_{\max}}{C_{\min}}+\gamma\right)|y-\tilde{y}|
\end{eqnarray}
which proves that $b(y)$ is Lipschitz with respect to Euclidean norm in $\mathbb{R}^{K+1}$.
\end{proof}
\end{proposition}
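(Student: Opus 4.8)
The plan is to exploit the fact that $b$ in (\ref{eqn:b}) is a finite sum of elementary terms, so it suffices to show each summand is Lipschitz on the relevant state space and then combine the individual constants. Throughout I would work on the probability simplex $\{y\in[0,1]^{K+1}:\sum_{j=0}^{K}y_j=1\}$ on which the dynamics actually live. The key structural input is that $g$ is non-negative on the finite state space $\{0,\dots,K\}$, so setting $C_{\min}=\min_{0\le j\le K}g(j)$ and $C_{\max}=\max_{0\le j\le K}g(j)$ (with $C_{\min}>0$), the denominator of the choice term satisfies $D(y):=\sum_{j=0}^{K}y_j g(j)\ge C_{\min}\sum_{j=0}^{K}y_j=C_{\min}>0$. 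This uniform lower bound is exactly what prevents the information-driven term from blowing up and is the crux of the whole argument.

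First I would dispose of the terms of $b(y)$ that do not involve the ratio $g(n)/D(y)$: the constant-rate retrieval contribution proportional to $(1-p)\lambda$ and the drop-off contribution proportional to $\big(\gamma-\sum_{j}j y_j\big)$. Each is a polynomial of degree at most two in $y$ with coefficients bounded uniformly on the compact set $[0,1]^{K+1}$ (using $M/N\to\gamma$, so $\gamma-\sum_j jy_j$ stays bounded), hence is continuously differentiable with uniformly bounded gradient on a convex domain; by the mean value inequality such maps are Lipschitz, contributing a constant of order $2(\lambda(1-p)+\gamma)$.

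The main obstacle is the information term $\phi_n(y)=y_n g(n)/D(y)$. Here I would avoid the naive estimate and instead use the algebraic identity
\begin{equation}
\phi_n(y)-\phi_n(\tilde{y})=g(n)\,\frac{(y_n-\tilde{y}_n)D(\tilde{y})+\tilde{y}_n\big(D(\tilde{y})-D(y)\big)}{D(y)\,D(\tilde{y})},
\end{equation}
together with the bounds $D(y),D(\tilde{y})\ge C_{\min}$, $D(\tilde{y})\le C_{\max}$, $0\le\tilde{y}_n\le 1$, and $|D(\tilde{y})-D(y)|\le C_{\max}\sqrt{K+1}\,|y-\tilde{y}|$ (the last via Cauchy--Schwarz, converting the $\ell^1$ difference of coordinates into the Euclidean norm). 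These combine to give $|\phi_n(y)-\phi_n(\tilde{y})|\le L_n|y-\tilde{y}|$ with $L_n$ depending only on $C_{\min},C_{\max},K$ and $g(n)$, and summing the $K+1$ such bounds in the $\ell^2$ sense yields a total choice-term constant of order $\lambda p\,C_{\max}/C_{\min}$.

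Collecting the three contributions produces a single Lipschitz constant of the form $2\big(\lambda(1-p)+\lambda p\,C_{\max}/C_{\min}+\gamma\big)$, which establishes the claim. I should note a cleaner but essentially equivalent route: since the Jacobian $b'(y)$ is computed explicitly in (\ref{b'1})--(\ref{b'3}) and each entry is continuous on the simplex (its only potential singularity, $D(y)=0$, is excluded by $D(y)\ge C_{\min}$), $b'$ is bounded on the compact simplex, and boundedness of the derivative on a convex set immediately gives Lipschitz continuity. I would nevertheless prefer the hands-on decomposition here, because it keeps the dependence on $p$, $\lambda$, $\gamma$ and the ratio $C_{\max}/C_{\min}$ fully explicit, which is what the subsequent Gronwall step in the fluid-limit proof ultimately needs.
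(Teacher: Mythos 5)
Your proposal is correct, and at the top level it follows the same strategy as the paper: the same three-way split of $b$ into the $(1-p)\lambda$ retrieval part, the drop-off part with rate $\gamma-\sum_j j y_j$, and the information term, controlled by the same constants $C_{\min}\le g\le C_{\max}$. The genuine difference is in the key estimate for the choice term, and there your version is the more careful one. The paper passes from $\bigl|y_k g(k)/D(y)-\tilde{y}_k g(k)/D(\tilde{y})\bigr|$ directly to $g(k)\,|y_k-\tilde{y}_k|/C_{\min}$, a bound that as written ignores the dependence of the denominator on \emph{all} coordinates: take $y_k=\tilde{y}_k$ with $D(y)\neq D(\tilde{y})$ and the paper's right-hand side vanishes while the left-hand side does not. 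Your quotient-difference identity, splitting $\phi_n(y)-\phi_n(\tilde{y})$ into a numerator increment and a denominator increment and bounding each, is exactly the repair for this step; the price is a worse constant, since your estimates actually deliver something of order $\lambda p\,(K+1)\,C_{\max}^2/C_{\min}^2$ rather than the clean $2\lambda p\,C_{\max}/C_{\min}$ that both you and the paper display at the end --- so your final line overstates the sharpness of your own bound, though any finite constant suffices for the proposition and for the subsequent Gronwall argument. Two further points where your care pays off: restricting to the probability simplex is what secures $D(y)\ge C_{\min}\sum_j y_j = C_{\min}$ (on all of $[0,1]^{K+1}$, where the paper works, $D$ vanishes at $y=0$ and the choice term is not even defined there); and you explicitly require $C_{\min}>0$, i.e.\ $g$ strictly positive on $\{0,\dots,K\}$, whereas the paper invokes only non-negativity, which does not give $C_{\min}>0$ (e.g.\ the polynomial choice $g(x)=x^{\alpha}$ of Section \ref{Sec_simulation} has $g(0)=0$). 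Your alternative route via the explicit Jacobian in Equations (\ref{b'1})--(\ref{b'3}), bounded on the compact simplex, is also valid and shorter, but as you note it hides the dependence on $p$, $\lambda$, $\gamma$, and $C_{\max}/C_{\min}$ that the hands-on decomposition makes explicit.
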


\begin{proposition}[Drift is Asymptotically Close to Lipschitz Drift]\label{drift}
Under the assumptions of Theorem \ref{fluid_limit}, we have for any $\epsilon>0$ and $s\geq 0$,
$$\lim_{N\rightarrow \infty}P(|\beta(Y_s^{N})-b(Y_s^{N})|>\epsilon)= 0.$$
\begin{proof}
\begin{eqnarray}
\left|\beta(Y^{N}_{s})-b(Y^{N}_{s})\right|&=&\left|\sum_{n=0}^{K}\left(\frac{M}{N}-\gamma\right)(\mathbf{1}_{n+1}-\mathbf{1}_{n})\mathbf{1}_{n<K}Y^{N}_{s}(n)\right|\nonumber\\
&\leq & 2\left|\frac{M}{N}-\gamma\right|\left|\sum_{n=0}^{K}Y^{N}_{s}(n) \right|\nonumber\\
&=& 2\left|\frac{M}{N}-\gamma\right|\rightarrow 0.
\end{eqnarray}
\end{proof}
\end{proposition}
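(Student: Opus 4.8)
The plan is to exploit the fact that the prelimit drift $\beta$ and its candidate Lipschitz limit $b$ from Equation (\ref{eqn:b}) differ in exactly one place. Inspecting the two formulas, the retrieval coefficients $\left((1-p)+p\,g(n)/\sum_{j}y_jg(j)\right)\lambda$, the associated increments $(\mathbf{1}_{n-1}-\mathbf{1}_n)$, and the term $\sum_j j y_j$ appearing in the return rate are term-for-term identical; the only discrepancy is that the return rate carries $M/N$ in $\beta(y)$ whereas it carries $\gamma$ in $b(y)$. First I would subtract the two expressions, so that everything cancels except the return contributions, leaving
\begin{equation}
\beta(y)-b(y)=\left(\frac{M}{N}-\gamma\right)\sum_{n=0}^{K}(\mathbf{1}_{n+1}-\mathbf{1}_{n})\mathbf{1}_{n<K}\,y_n .
\end{equation}
The key observation is that the random empirical measure $y=Y_s^N$ enters this difference only through the probability weights $y_n$, while the entire mismatch between the two drifts is carried by the deterministic scalar $M/N-\gamma$.

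Next I would pass to the Euclidean norm and apply the triangle inequality, estimating each basis increment $(\mathbf{1}_{n+1}-\mathbf{1}_n)$ crudely and using that $Y_s^N$ is a probability vector, so that $\sum_{n=0}^{K}Y_s^N(n)=1$. This produces the deterministic bound
\begin{equation}
\left|\beta(Y_s^N)-b(Y_s^N)\right|\le 2\left|\frac{M}{N}-\gamma\right| .
\end{equation}
Since the right-hand side no longer depends on the sample path, the convergence will in fact be deterministic rather than merely in probability: under the standing assumption of Theorem \ref{fluid_limit} that $\lim_{N\to\infty}M/N=\gamma$, the right-hand side tends to $0$, so for every $\epsilon>0$ the event $\{|\beta(Y_s^N)-b(Y_s^N)|>\epsilon\}$ is empty for all sufficiently large $N$. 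Therefore $P(|\beta(Y_s^N)-b(Y_s^N)|>\epsilon)\to 0$, and the estimate is uniform in $s\ge 0$.

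There is essentially no analytic obstacle to overcome here; the only point requiring care is the bookkeeping in the cancellation step, namely confirming that the choice-function terms and the retrieval increments really are identical in $\beta$ and $b$, so that nothing survives except the $M/N-\gamma$ discrepancy. Once that is verified the bound is immediate, and no martingale or concentration machinery is needed --- this is precisely why this proposition is the single ingredient of the functional law of large numbers (Theorem \ref{fluid_limit}) that does not invoke Doob's inequality or Gronwall's lemma.
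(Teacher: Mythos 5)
Your proposal is correct and follows essentially the same route as the paper's proof: the identical cancellation leaving $\left(\frac{M}{N}-\gamma\right)\sum_{n=0}^{K}(\mathbf{1}_{n+1}-\mathbf{1}_{n})\mathbf{1}_{n<K}\,Y_s^N(n)$, the same triangle-inequality bound $2\left|\frac{M}{N}-\gamma\right|$ using that $Y_s^N$ is a probability vector, and the same conclusion from $M/N\to\gamma$. Your added remark that the bound is deterministic and uniform in $s$ (so convergence is in fact surely, not merely in probability) is a minor sharpening implicit in the paper's argument, not a different approach.
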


\begin{lemma}\label{martigale_brackets_choice}
$\sqrt{N}M_{t}^{N}$ is a family of martingales independent of $D_{0}^{N}$ with Doob-Meyer brackets given by
\begin{eqnarray}
\boldlangle \sqrt{N}M^{N}(k)\boldrangle_{t}&=&\int_{0}^{t}(\beta_{+}(Y_{s}^{N})(k)+\beta_{-}(Y_{s}^{N})(k))ds,\nonumber\\
\boldlangle \sqrt{N}M^{N}(k),\sqrt{N}M^{N}(k+1)\boldrangle_{t}&=&-\int_{0}^{t}\left[\left((1-p)+p\frac{g(k+1)}{\sum_{j=0}^{K}jy_{j}}\right)\lambda Y_{s}^{N}(k+1)\right.\nonumber\\
& & \left.+\left(\frac{M}{N}-\sum_{j=0}^{K}jY_{s}^{N}(j)\right)Y_{s}^{N}(k)\right] ds \quad \text{for } k<K,\nonumber\\
\boldlangle \sqrt{N}M^{N}(k),\sqrt{N}M^{N}(j)\boldrangle_{t}&=&0\quad \text{for } |k-j|>1.
\end{eqnarray}
\end{lemma}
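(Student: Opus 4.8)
The plan is to recognize $\sqrt{N}M_t^N$ as a compensated pure-jump martingale and to read off its predictable (co)variation directly from the jump structure of the chain. From the semimartingale decomposition in Equation (\ref{semiY}) and the representation derived in the proof of Proposition \ref{bound}, we have
$$M_t^N=\int_0^t\int\bigl(y-Y_{s-}^N\bigr)\,(\tilde\mu-\nu)(ds,dy),$$
where $\tilde\mu$ is the jump measure of $Y^N$ and $\nu(ds,B)=Q^N(Y_{s-}^N,B)\,ds$ is its compensator. For such an integral against a compensated integer-valued random measure, the standard angle-bracket formula gives, componentwise,
$$\boldlangle M^N(k),M^N(j)\boldrangle_t=\int_0^t\sum_{x\neq y}(x_k-y_k)(x_j-y_j)\,Q^N(Y_s^N,x)\,ds,$$
evaluated at $y=Y_s^N$; this is exactly the polarization of the identity for $(M_t^N)^2$ already established in Proposition \ref{bound}. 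Multiplying by $N$ produces the brackets of $\sqrt{N}M^N$, so the whole computation reduces to enumerating jumps.

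The first substantive step is a bookkeeping of transitions. Conditioned on $Y_s^N=y$, every admissible move displaces exactly two \emph{adjacent} coordinates by $\pm 1/N$: a pick-up at an $n$-bike station sends $y\mapsto y+\tfrac1N(\mathbf 1_{n-1}-\mathbf 1_n)$ and a drop-off sends $y\mapsto y+\tfrac1N(\mathbf 1_{n+1}-\mathbf 1_n)$, with the rates listed just before Proposition \ref{functional_forward_eqn}. Hence $(x_k-y_k)^2=1/N^2$ precisely for the transitions whose support contains coordinate $k$, namely pick-ups at $k$- and $(k+1)$-bike stations and drop-offs at $k$- and $(k-1)$-bike stations. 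After the factor $N$ cancels one power of $1/N$, the four surviving rates collapse to $\beta_+(Y_s^N)(k)+\beta_-(Y_s^N)(k)$, where $\beta_+(\cdot)(k)$ is the total rate of transitions that raise coordinate $k$ (pick-up at $k+1$, drop-off at $k-1$) and $\beta_-(\cdot)(k)$ the total rate of those that lower it (pick-up at $k$, drop-off at $k$). This yields the diagonal bracket.

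For the off-diagonal brackets I would ask when a single jump touches both coordinate $k$ and coordinate $j$. Since each transition affects only a pair of adjacent indices, this never occurs when $|k-j|>1$, so every summand vanishes and the bracket is identically $0$. When $j=k+1$, the only contributing transitions are the pick-up at a $(k+1)$-bike station (which raises $k$ and lowers $k+1$) and the drop-off at a $k$-bike station (which lowers $k$ and raises $k+1$); in both cases the product $(x_k-y_k)(x_{k+1}-y_{k+1})=-1/N^2$, so after multiplying by $N$ the two rates enter with a common minus sign and reproduce exactly the stated integrand $-\bigl[(1-p+p\,g(k+1)/\sum_{j}Y_s^N(j)g(j))\lambda Y_s^N(k+1)+(M/N-\sum_j jY_s^N(j))Y_s^N(k)\bigr]$. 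Finally, independence of $\sqrt{N}M^N$ from $D_0^N=\sqrt N(Y_0^N-y_0)$ follows because $D_0^N$ is $\mathcal F_0$-measurable whereas $M^N$ is built from the compensated jump measure on $(0,t]$; representing the chain through independent Poisson processes that drive the transitions (and are independent of the random initial configuration) makes the increments of $M^N$ independent of $\mathcal F_0$, hence of $D_0^N$.

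I expect the routine-but-delicate part to be the sign and coordinate bookkeeping in the off-diagonal case, together with verifying that the diagonal sum genuinely collapses to $\beta_++\beta_-$ rather than to $|\beta|$ (here $\beta_\pm(\cdot)(k)$ must be read as the directional gain and loss rates of coordinate $k$, not the pointwise positive and negative parts of the net drift). The only structural input is the angle-bracket formula for compensated jump martingales, which is standard and already latent in Proposition \ref{bound}, so no new estimate is required.
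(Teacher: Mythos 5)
Your proposal is correct and follows essentially the same route as the paper's proof: both compute the Doob--Meyer brackets by enumerating the chain's adjacent-coordinate jumps under the compensated jump-measure (Dynkin) representation, and your direct cross-bracket formula is exactly the polarization step the paper carries out via $\boldlangle M^{N}(k)+M^{N}(k+1)\boldrangle_{t}$. Your refinements---reading $\beta_{\pm}(\cdot)(k)$ as the directional up/down jump rates of coordinate $k$ rather than pointwise positive/negative parts of the net drift, justifying the vanishing bracket for $|k-j|>1$ by the absence of common jumps instead of the paper's looser ``independence'' claim, and actually supplying the Poisson-driving-process argument for independence from $D_{0}^{N}$, which the paper asserts but never proves---only tighten points the paper leaves implicit.
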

\begin{proof}
By Dynkin's formula,
\begin{equation}
\begin{split}
\boldlangle \sqrt{N}M^{N}(k)\boldrangle_{t}=&\int_{0}^{t}N\sum_{x\neq Y_{s}^{N}}|x(k)-Y_{s}^{N}(k)|^2 Q(Y_{s}^{N},x)ds\\
=&N\int_{0}^{t}\alpha(Y_{s}^{N})(k)ds\\
=&\int_{0}^{t}\sum_{r} \left[\frac{1}{rNR_{\max}}\left(Y_{s}^{N}(r,k+1)\mathbf{1}_{k<K}+Y_{s}^{N}(r,k)\mathbf{1}_{k>0}\right)\right.\\
&+\left.\left(\frac{M}{N}-\sum_{n'}\sum_{r'}n'Y_{s}^{N}(r',n')\right)\left(Y_{s}^{N}(r,k)\mathbf{1}_{k<K}+Y_{s}^{N}(r,k-1)\mathbf{1}_{k>0}\right)\right]ds\\
=&\int_{0}^{t}(\beta_{+}(Y_{s}^{N})(k)+\beta_{-}(Y_{s}^{N})(k))ds.
\end{split}
\end{equation}

To compute  $\boldlangle \sqrt{N}M^{N}(k),\sqrt{N}M^{N}(k+1)\boldrangle_{t}$ for $k<K$, since
\begin{equation}
\begin{split}
&\boldlangle M^{N}(k)+M^{N}(k+1)\boldrangle_{t}\\
=&\int_{0}^{t}\sum_{x\neq Y_{s}^{N}}\left|x(k)+x(k+1)-Y_{s}^{N}(k)-Y_{s}^{N}(k+1)\right|^2 Q(Y_{s}^{N},x)ds\\
=&\frac{1}{N}\int_{0}^{t}\sum_{r} \left[\frac{1}{rNR_{\max}}\left(Y_{s}^{N}(r,k+2)\mathbf{1}_{k<K-1}+Y_{s}^{N}(r,k)\mathbf{1}_{k>0}\right)\right.\\
&+\left.\left(\frac{M}{N}-\sum_{n'}\sum_{r'}n'Y_{s}^{N}(r',n')\right)\left(Y_{s}^{N}(r,k+1)\mathbf{1}_{k<K-1}+Y_{s}^{N}(r,k-1)\mathbf{1}_{k>0}\right)\right]ds.
\end{split}
\end{equation}
We have that
\begin{equation}
\begin{split}
&\boldlangle \sqrt{N}M^{N}(k),\sqrt{N}M^{N}(k+1)\boldrangle_{t}\\
=&\frac{N}{2}\left[\boldlangle M^{N}(k)+M^{N}(k+1)\boldrangle_{t}-\boldlangle M^{N}(k)\boldrangle_{t}-\boldlangle M^{N}(k+1)\boldrangle_{t}\right]\\
=&\frac{1}{2}\int_{0}^{t}\sum_{r} \left[\frac{1}{rNR_{\max}}\left(Y_{s}^{N}(r,k+2)\mathbf{1}_{k<K-1}+Y_{s}^{N}(r,k)\mathbf{1}_{k>0}\right)\right.\\
&+\left.\left(\frac{M}{N}-\sum_{n'}\sum_{r'}n'Y_{s}^{N}(r',n')\right)\left(Y_{s}^{N}(r,k+1)\mathbf{1}_{k<K-1}+Y_{s}^{N}(r,k-1)\mathbf{1}_{k>0}\right)\right]ds\\
&-\frac{1}{2}\int_{0}^{t}(\beta_{+}(Y_{s}^{N})(k)+\beta_{+}(Y_{s}^{N})(k+1)+\beta_{-}(Y_{s}^{N})(k)+\beta_{-}(Y_{s}^{N})(k+1))ds\\
=&-\int_{0}^{t}\sum_{r}\left[\frac{1}{rNR_{\max}}Y_{s}^{N}(r,k+1)+\left(\frac{M}{N}-\sum_{n'}\sum_{r'}n'Y_{s}^{N}(r',n')\right)Y_{s}^{N}(r,k)\right] ds.
\end{split}
\end{equation}
When $|k-j|>1$, $M^{N}(k)$ and $M^{N}(j)$ are independent, thus 
\begin{equation}
\boldlangle \sqrt{N}M^{N}(k),\sqrt{N}M^{N}(j)\boldrangle_{t}=0.
\end{equation}
\end{proof}

\begin{proposition}\label{driftbound_choice}
For any $s\geq 0$,
\begin{equation}
\limsup_{N\rightarrow \infty}\sqrt{N}\left|\beta(Y_{s}^{N})-b(Y_{s}^{N})\right|<\infty.
\end{equation}
\begin{proof}
\begin{eqnarray}
\limsup_{N\rightarrow \infty}\sqrt{N}\left|\beta(Y^{N}_{s})-b(Y^{N}_{s})\right|&=&\limsup_{N\rightarrow \infty}\sqrt{N}\left|\sum_{n=0}^{K}\left(\frac{M}{N}-\gamma\right)(\mathbf{1}_{n+1}-\mathbf{1}_{n})\mathbf{1}_{n<K}Y^{N}_{s}(n)\right|\nonumber\\
&\leq & \limsup_{N\rightarrow \infty}2\sqrt{N}\left|\frac{M}{N}-\gamma\right|\left|\sum_{n=0}^{K}Y^{N}_{s}(n) \right|\nonumber\\
&=&\limsup_{N\rightarrow \infty} 2\sqrt{N}\left|\frac{M}{N}-\gamma\right|\nonumber\\
&<& \infty.
\end{eqnarray} 
\end{proof}
\end{proposition}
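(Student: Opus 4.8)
The plan is to exploit the fact that the prelimit drift $\beta$ and the mean-field vector field $b$ have \emph{identical} functional forms up to a single scalar. Comparing the expression for $\beta(y)$ derived in the proof of Theorem \ref{fluid_limit} with the field $b(y)$ in Equation (\ref{eqn:b}), one sees that the retrieval contributions (those carrying the factor $\lambda(1-p)+\lambda p\,g(n)/\sum_j y_j g(j)$) coincide exactly, and the only discrepancy is that the return contributions carry the prelimit factor $M/N$ in $\beta$ but its limit $\gamma$ in $b$. First I would therefore subtract the two expressions termwise. Every retrieval term cancels and the return terms collapse to the single difference $M/N-\gamma$, leaving
\begin{equation}
\beta(Y^{N}_{s})-b(Y^{N}_{s})=\sum_{n=0}^{K}\left(\frac{M}{N}-\gamma\right)(\mathbf{1}_{n+1}-\mathbf{1}_{n})\mathbf{1}_{n<K}\,Y^{N}_{s}(n).
\end{equation}

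Next I would bound the Euclidean norm of this difference. Factoring out the scalar $|M/N-\gamma|$, the remaining sum is a superposition of the adjacent-difference vectors $(\mathbf{1}_{n+1}-\mathbf{1}_{n})$, each of norm $\sqrt{2}$, weighted by $Y^{N}_{s}(n)$. Applying the triangle inequality together with $0\le Y_s^N(n)\le 1$ and the fact that $Y_s^N$ is a probability vector, so that $\sum_{n=0}^{K}Y^{N}_{s}(n)=1$, the sum is controlled by a constant multiple of $|M/N-\gamma|$, giving
\begin{equation}
\left|\beta(Y^{N}_{s})-b(Y^{N}_{s})\right|\le 2\left|\frac{M}{N}-\gamma\right|.
\end{equation}

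Finally I would multiply through by $\sqrt{N}$ and pass to the $\limsup$, invoking the standing hypothesis of Theorem \ref{difftheorem} that $\limsup_{N\rightarrow\infty}\sqrt{N}\left(M/N-\gamma\right)<\infty$, which yields $\limsup_{N\rightarrow\infty}\sqrt{N}\,\lvert\beta(Y_s^N)-b(Y_s^N)\rvert\le 2\limsup_{N\rightarrow\infty}\sqrt{N}\,\lvert M/N-\gamma\rvert<\infty$ and closes the argument. I do not expect any genuine obstacle here: the entire content of the statement is the exact cancellation of the retrieval terms, after which the estimate is a one-line computation. The only points requiring care are confirming that $Y_s^N$ is genuinely a probability vector so that $\sum_n Y_s^N(n)=1$ may be used, and tracking the constant in the norm bound; the factor $2$ is comfortably safe since $\sqrt{2}\le 2$ and each summand merely transfers mass between two adjacent coordinates.
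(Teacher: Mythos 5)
Your proposal is correct and follows essentially the same route as the paper: the retrieval terms cancel exactly, the difference collapses to $\left(\frac{M}{N}-\gamma\right)$ times a sum of adjacent-difference vectors, and the bound $2\left|\frac{M}{N}-\gamma\right|$ follows from the triangle inequality and $\sum_{n}Y_{s}^{N}(n)=1$, after which the hypothesis $\limsup_{N\rightarrow\infty}\sqrt{N}\left(\frac{M}{N}-\gamma\right)<\infty$ from Theorem \ref{difftheorem} closes the argument. Your extra care in tracking the $\sqrt{2}\le 2$ constant and in verifying the probability-vector property is sound but does not change the substance.
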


\begin{lemma}(Finite Horizon Bound) \label{L2bound_choice}
For any $T\geq 0$, if $$\limsup_{N\rightarrow \infty}\mathbb{E}\left(|D_{0}^{N}|^2 \right) < \infty ,$$ then we have $$\limsup_{N\rightarrow \infty}\mathbb{E}\left(\sup_{0\leq t\leq T}|D_{t}^{N}|^2 \right) < \infty .$$
\end{lemma}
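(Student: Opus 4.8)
The plan is to run the same martingale-plus-Gronwall argument used for the law of large numbers in Theorem~\ref{fluid_limit}, but now at the diffusive scale and in $L^2$ rather than in probability. I would start from the semimartingale decomposition of $D_t^N$ displayed just before Theorem~\ref{difftheorem}, namely
$$D_{t}^{N}=D_{0}^{N}+\sqrt{N}M_{t}^{N}+\int_{0}^{t}\sqrt{N}[\beta(Y_{s}^{N})-b(Y_{s}^{N})]\,ds+\int_{0}^{t}\sqrt{N}[b(Y_{s}^{N})-b(y_s)]\,ds,$$
and bound the four contributions separately. Taking the supremum over $t\le T$, squaring, and using $(a+b+c+d)^2\le 4(a^2+b^2+c^2+d^2)$ reduces the problem to controlling, uniformly in $N$, the initial term, the rescaled martingale, the drift-discrepancy integral, and the Lipschitz integral.

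For the initial term the hypothesis $\limsup_{N\rightarrow\infty} \mathbb{E}(|D_0^N|^2)<\infty$ is exactly what is needed. For the martingale term I would apply Proposition~\ref{bound} to $M^N$ and rescale: since $\mathbb{E}(\sup_{t\le T}|\sqrt{N}M_t^N|^2)=N\,\mathbb{E}(\sup_{t\le T}|M_t^N|^2)\le 4N\,\mathbb{E}\int_0^T\alpha(Y_s^N)\,ds$, and the computation in the proof of Theorem~\ref{fluid_limit} shows $\alpha(y)=O(1/N)$ uniformly on $[0,1]^{K+1}$, this contribution is bounded by a constant multiple of $T$, independent of $N$. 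For the drift-discrepancy term, Proposition~\ref{driftbound_choice} (together with the standing assumption $\limsup_{N\rightarrow\infty}\sqrt{N}(M/N-\gamma)<\infty$ from Theorem~\ref{difftheorem}) gives $\sqrt{N}|\beta(Y_s^N)-b(Y_s^N)|\le C$ uniformly, so after Cauchy--Schwarz this term is at most $C^2T^2$. The fourth term is where the self-reference enters: by Proposition~\ref{Lipschitz}, $\sqrt{N}|b(Y_s^N)-b(y_s)|\le L\sqrt{N}|Y_s^N-y_s|=L|D_s^N|$, and Cauchy--Schwarz yields
$$\left(\int_0^t \sqrt{N}\,|b(Y_s^N)-b(y_s)|\,ds\right)^2\le L^2 T\int_0^t |D_s^N|^2\,ds\le L^2 T\int_0^t \sup_{u\le s}|D_u^N|^2\,ds.$$

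Writing $u_N(t)=\mathbb{E}(\sup_{s\le t}|D_s^N|^2)$ and collecting the first three bounds into a single $N$-uniform constant $C_0=C_0(T)$, the estimates above give $u_N(t)\le C_0+4L^2T\int_0^t u_N(s)\,ds$ for all $t\le T$, where the passage from $\int_0^t \sup_{u\le s}|D_u^N|^2\,ds$ to $\int_0^t u_N(s)\,ds$ under the expectation is justified by Tonelli's theorem. Gronwall's lemma then yields $u_N(T)\le C_0\,e^{4L^2T^2}$, a bound independent of $N$, which is the claim. The main obstacle is keeping every constant genuinely uniform in $N$: this hinges on the $O(1/N)$ decay of $\alpha$ (so that the $\sqrt{N}$-scaled martingale has $O(1)$ Doob--Meyer brackets) and on the drift bound of Proposition~\ref{driftbound_choice}; once these are in place, the argument is a routine $L^2$ Gronwall estimate, and one must only take care that the supremum-in-$t$ is moved inside the expectation correctly via Doob's inequality before invoking Gronwall.
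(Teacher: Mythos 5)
Your proposal is correct and follows essentially the same route as the paper's proof: the same semimartingale decomposition of $D_t^N$, the same three ingredients (Proposition~\ref{bound} with $\alpha=O(1/N)$ for the rescaled martingale, Proposition~\ref{driftbound_choice} for the drift discrepancy, Proposition~\ref{Lipschitz} for the Lipschitz term), and Gronwall's lemma. The only difference is cosmetic: you apply Gronwall in $L^2$ to $u_N(t)=\mathbb{E}\bigl(\sup_{s\le t}|D_s^N|^2\bigr)$ after squaring and using Cauchy--Schwarz/Tonelli, whereas the paper runs Gronwall pathwise on $|D_t^N|$ and squares at the end; your ordering in fact sidesteps the Minkowski-type step that the paper's final display handles somewhat loosely.
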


\begin{proof}
By Proposition \ref{driftbound_choice}, $\sqrt{N}|\beta(Y_{s}^{N})-b(Y_{s}^{N})|=O(1)$, then
\begin{equation}
\begin{split}
|D_{t}^{N}|&\leq |D_{0}^{N}|+\sqrt{N}|M_{t}^{N}|+O(1)t+\int_{0}^{t}\sqrt{N} |b(Y_{s}^{N})-b(y_s)|ds\\
&\leq|D_{0}^{N}|+\sqrt{N}|M_{t}^{N}|+O(1)t+\int_{0}^{t}\sqrt{N}L|Y_{s}^{N}-y_s|ds\\
&=|D_{0}^{N}|+\sqrt{N}|M_{t}^{N}|+O(1)t+\int_{0}^{t}L|D_{s}^{N}|ds.
\end{split}
\end{equation}
By Gronwall's Lemma,
$$\sup_{0\leq t\leq T}|D_{t}^{N}|\leq e^{LT}\left(|D_{0}^{N}|+O(1)T+\sup_{0\leq t\leq T}|\sqrt{N}M_{t}^{N}|\right),$$
then
$$\limsup_{N\rightarrow \infty}\mathbb{E}\left(\sup_{0\leq t \leq T}|D_{t}^{N}|^2\right)\leq e^{2LT}\left[\limsup_{N\rightarrow \infty}\mathbb{E}(|D_{0}^{N}|)+O(1)T+\limsup_{N\rightarrow \infty}\mathbb{E}\left(\sup_{0\leq t\leq T}\sqrt{N}|M_{t}^{N}|\right)\right]^2.$$
We know that 
$$\left[\mathbb{E}\left(\sup_{0\leq t\leq T}\sqrt{N}|M_{t}^{N}|\right)\right]^2\leq N\mathbb{E}\left(\sup_{0\leq t\leq T}|M_{t}^{N}|^2\right)\leq 4NA(N)T,$$
and that $A(N)=O(\frac{1}{N})$. Therefore
$$\limsup_{N\rightarrow \infty}\mathbb{E}\left(\sup_{0\leq t\leq T}\sqrt{N}|M_{t}^{N}| \right)<\infty.$$
Together with our assumption $\limsup_{N\rightarrow \infty}\mathbb{E}(|D_{0}^{N}|^2)<\infty$, we have
$$\limsup_{N\rightarrow \infty}\mathbb{E}\left(\sup_{0\leq t \leq T}|D_{t}^{N}|^2 \right)<\infty.$$\\
\end{proof}

\begin{lemma}\label{tightness_choice}
If $(D_{0}^{N})_{N=1}^{\infty}$ is tight then $(D^{N})_{N=1}^{\infty}$ is tight and its limit points are continuous.
\end{lemma}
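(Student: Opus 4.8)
The plan is to establish tightness via the semimartingale decomposition of $D^N$ and the Aldous--Rebolledo criterion, exploiting that tightness is stable under finite sums. Recall the decomposition
$$D_{t}^{N}=D_{0}^{N}+\sqrt{N}M_{t}^{N}+\int_{0}^{t}\sqrt{N}[\beta(Y_{s}^{N})-b(Y_{s}^{N})]\,ds+\int_{0}^{t}\sqrt{N}[b(Y_{s}^{N})-b(y_s)]\,ds.$$
It therefore suffices to prove tightness of the initial term, the two finite-variation integral terms, and the martingale term $\sqrt{N}M^{N}$ separately, and then to control the maximal jump size in order to force continuity of the limit points.

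First, $D_{0}^{N}$ is tight by hypothesis. For the finite-variation terms I would argue $C$-tightness directly. Proposition \ref{driftbound_choice} gives $\sqrt{N}|\beta(Y_{s}^{N})-b(Y_{s}^{N})|=O(1)$ uniformly, so the third term is Lipschitz in $t$ with a deterministic constant, hence $C$-tight. For the fourth term, I would write $\sqrt{N}[b(Y_{s}^{N})-b(y_s)]=b'(Z_{s}^{N})D_{s}^{N}$ by the mean value theorem exactly as in the proof of Theorem \ref{difftheorem}; since $b'$ is bounded on the compact simplex $\mathcal{P}(\mathcal{X})$ and Lemma \ref{L2bound_choice} yields $\limsup_{N}\mathbb{E}(\sup_{t\leq T}|D_{t}^{N}|^2)<\infty$, the integrand is stochastically bounded, so this term is again Lipschitz in $t$ with a stochastically bounded modulus and is $C$-tight.

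The substantive step is the martingale term. From Lemma \ref{martigale_brackets_choice} the Doob--Meyer brackets satisfy $\boldlangle \sqrt{N}M^{N}(k)\boldrangle_{t}=\int_{0}^{t}(\beta_{+}(Y_{s}^{N})(k)+\beta_{-}(Y_{s}^{N})(k))\,ds$, whose integrands are uniformly bounded over the simplex: the choice weights $g(n)/\sum_{j}y_{j}g(j)$ are bounded by $C_{\max}/C_{\min}$ (as in Proposition \ref{Lipschitz}) and $M/N-\sum_{j}jY^{N}_{s}(j)$ is bounded. Hence the brackets are Lipschitz in $t$ with a constant uniform in $N$. This makes the Aldous condition immediate: for a stopping time $\tau\leq T$ and small $\theta$, $\mathbb{E}(|\sqrt{N}M^{N}_{\tau+\theta}-\sqrt{N}M^{N}_{\tau}|^2\mid \mathcal{F}_{\tau})=\mathbb{E}(\boldlangle\sqrt{N}M^{N}\boldrangle_{\tau+\theta}-\boldlangle\sqrt{N}M^{N}\boldrangle_{\tau}\mid\mathcal{F}_{\tau})\leq C\theta$, and Chebyshev's inequality gives the required smallness uniformly in $N$ and in the stopping time. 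Combined with stochastic boundedness at fixed times, Rebolledo's criterion (see Chapter 3 of \citet{Ethier2009}) yields tightness of $\sqrt{N}M^{N}$.

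Finally, continuity of the limit points follows from a jump estimate: every jump of $Y^{N}$ has size $1/N$, so every jump of $\sqrt{N}M^{N}$, and thus of $D^{N}$, has size $O(1/\sqrt{N})$, whence $\sup_{t\leq T}|\Delta D_{t}^{N}|\xrightarrow{p}0$. By the standard characterization of $C$-tightness (tightness in $\mathbb{D}(\mathbb{R}_{+},\mathbb{R}^{K+1})$ together with vanishing maximal jump forces every limit point to be supported on $C(\mathbb{R}_{+},\mathbb{R}^{K+1})$), the limit points are continuous. I expect the main obstacle to be the uniform bookkeeping in verifying the Aldous condition for the martingale component; once the uniform Lipschitz bound on the brackets is secured, the remaining estimates are routine.
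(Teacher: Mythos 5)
Your proof is correct, but your treatment of the martingale term takes a genuinely different route from the paper's. The paper proves tightness of $\sqrt{N}M^{N}$ by first establishing convergence of the Doob--Meyer brackets, $\sup_{t\leq T}\left|\boldlangle \sqrt{N}M^{N}(k)\boldrangle_{t}-\boldlangle M(k)\boldrangle_{t}\right|\xrightarrow{p}0$ (using the mean field limit, the Lipschitz property of $b_{\pm}$, and the drift-closeness estimate of Proposition \ref{driftbound_choice}), checking the vanishing-jump condition, and then invoking the martingale FCLT (Theorem 1.4 in Chapter 7 of \citet{Ethier2009}) to conclude that $\sqrt{N}M^{N}$ converges in distribution to a Brownian motion, with tightness extracted a posteriori via Prohorov's theorem. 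You instead use only the uniform Lipschitz-in-$t$ bound on the brackets (which indeed follows from $g(n)/\sum_{j}y_{j}g(j)\leq C_{\max}/C_{\min}$ and $M/N\leq 2\gamma$ for large $N$) to verify the Aldous condition, then apply Rebolledo's criterion. Your route is more elementary: it needs only uniform boundedness of the transition rates on the simplex, not their convergence, and it does not identify the limit. The paper's heavier argument is not wasted, however, since the identification of the limiting Gaussian martingale is needed anyway in step 4 of the proof of Theorem \ref{difftheorem}; the paper simply folds that work into the tightness lemma, whereas you would have to redo the bracket-convergence computation there. Your remaining terms match the paper's estimates essentially verbatim (the paper bounds the third term by the Lipschitz constant $L$ of $b$ rather than via the mean value theorem, an immaterial difference), and the paper obtains continuity of limit points by working directly with the continuous modulus $w(x,\delta,T)$ in both conditions (i) and (ii), while you get it from the deterministic jump bound $\sup_{t}|\Delta D^{N}_{t}|=O(1/\sqrt{N})$; both are standard and valid.

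One caveat on your assembly step: your opening claim that ``tightness is stable under finite sums'' is false in general for the Skorokhod $J_{1}$ topology, since addition is not $J_{1}$-continuous. Your argument survives because every summand you produce is in fact $C$-tight --- the two integral terms by your modulus bounds, the initial term because it is constant in $t$, and the martingale term because Aldous-tightness together with your $O(1/\sqrt{N})$ jump bound forces its limit points to be continuous --- and $C$-tightness \emph{is} preserved under finite sums. You should say this explicitly rather than appeal to the false general statement; with that repair the proof is complete.
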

\begin{proof}
To prove the tightness of $(D^{N})_{N=1}^{\infty}$ and the continuity of the limit points, we only need to show the following two conditions holds for each $T>0$ and $\epsilon>0$,
\begin{itemize}
\item[(i)] 
\begin{equation}
\lim_{K\rightarrow \infty}\limsup_{N\rightarrow \infty}\mathbb{P}\left(\sup_{0\leq t\leq T}|D_{t}^{N}|>K \right)=0,
\end{equation}
\item[(ii)] 
\begin{equation}
\lim_{\delta\rightarrow 0}\limsup_{N\rightarrow \infty}\mathbb{P}\left(w(D^{N},\delta,T)\geq \epsilon \right)=0
\end{equation}
\end{itemize}
where for $x\in \mathbb{D}^{d}$,
\begin{equation}
w(x,\delta,T)=\sup\left\{\sup_{u,v\in[t,t+\delta]}|x(u)-x(v)|:0\leq t\leq t+\delta\leq T\right\}.
\end{equation}
By Lemma \ref{L2bound_choice}, there exists $C_{0}>0$ such that
\begin{eqnarray}
\lim_{K\rightarrow \infty}\limsup_{N\rightarrow \infty}\mathbb{P} \left(\sup_{0\leq t\leq T}|D_{t}^{N}|>K \right) &\leq& \lim_{K\rightarrow \infty}\limsup_{N\rightarrow \infty}\frac{\mathbb{E}\left(\sup_{0\leq t\leq T}|D_{t}^{N}|^2 \right)}{K^2} \nonumber\\
&\leq&  \lim_{K\rightarrow \infty}\frac{C_{0}}{K^2} \nonumber\\
&=&0,
\end{eqnarray}
which proves condition (i).

For condition (ii), we have that

\begin{eqnarray}
D^N_{u} - D^N_{v} &=& \underbrace{\sqrt{N} \cdot ( M^N_{u} - M^N_{v})}_{\text{first term}} + \underbrace{\int^{u}_{v} \sqrt{N} \left( \beta(Y^N_{z}) - b(Y^N_{z})  \right) dz}_{\text{second term}} \nonumber \\&+& \underbrace{\int^{u}_{v} \sqrt{N} \left( b(Y^N_{z}) - b(y_{z})  \right) dz }_{\text{third term}}
\end{eqnarray}
for any $0<t\leq u<v\leq t+\delta\leq T$.  Now it suffices to show that each of the three terms of $D^N_{u} - D^N_{v}$ satisfies condition (ii).  In what follows, we will show that each of the three terms satisfies condition (ii) to complete the proof of tightness. 

 For the first term, 
similar to the proof of Proposition ~\ref{drift}, we can show that 
$$\sup_{t\leq T}\left|\beta_{+}(Y_{t}^{N})-b_{+}(Y_{t}^{N})\right|\xrightarrow{p}0, \quad \sup_{t\leq T}\left|\beta_{-}(Y_{t}^{N})-b_{-}(Y_{t}^{N})\right|\xrightarrow{p}0.$$
And by the proof of Proposition \ref{Lipschitz}, $b_{+}(y), b_{-}(y)$ are also Lipschitz with constant $L$, then by the fact that the composition of Lipschitz functions are also Lipschitz,
\begin{eqnarray}
\max\left\{\sup_{t\leq T}|b_{+}(Y_{t}^{N})-b_{+}(y_{t})|,\sup_{t\leq T}|b_{-}(Y_{t}^{N})-b_{-}(y_{t})|\right\}\leq L\sup_{t\leq T}|Y_{t}^{N}-y_{t}|.
\end{eqnarray}
By Theorem ~\ref{fluid_limit}, 
\begin{equation}
\sup_{t\leq T}|Y_{t}^{N}-y_{t}|\xrightarrow{p} 0.
\end{equation}
Thus, for any $\epsilon>0$,
\begin{eqnarray}
& &\lim_{N\rightarrow \infty}\mathbb{P}\left(\sup_{t\leq T}\left|\boldlangle \sqrt{N}M^N(k)\boldrangle_{t}- \boldlangle M(k)\boldrangle_{t}\right|>\epsilon\right)\nonumber \\
&=&\lim_{N\rightarrow \infty}\mathbb{P}\left(\sup_{t\leq T}\left|\int_{0}^{t}\left(\beta_{+}(Y_{s}^{N})+\beta_{-}(Y_{s}^{N})- b_{+}(y_{s})-b_{-}(y_{s})\right)ds\right|>\epsilon\right)\nonumber\\
&\leq & \lim_{N\rightarrow \infty}\mathbb{P}\left(\sup_{t\leq T}T\left|\beta_{+}(Y_{t}^{N})- b_{+}(Y^{N}_{t})\right|>\epsilon/3\right)+\lim_{N\rightarrow \infty}\mathbb{P}\left(\sup_{t\leq T}T\left|\beta_{-}(Y_{t}^{N})- b_{-}(Y^{N}_{t})\right|>\epsilon/3\right)\nonumber\\
& &+\lim_{N\rightarrow \infty}\mathbb{P}\left(\sup_{t\leq T}2LT\left|Y_{t}^{N}- y_{t}\right|>\epsilon/3 \right)\nonumber \\
&=& 0,
\end{eqnarray}
which implies
\begin{equation}
\sup_{t\leq T}\left|\boldlangle \sqrt{N}M^N(k)\boldrangle_{t}- \boldlangle M(k)\boldrangle_{t}\right|\xrightarrow{p} 0.
\end{equation}

We also know that the jump size of $D^{N}_{t}$ is $1/\sqrt{N}$, therefore
\begin{equation}
\lim_{N\rightarrow \infty}\mathbb{E}\left[\sup_{0<t\leq T}\left|M^{N}_{t}-M^{N}_{t-}\right| \right]=0.
\end{equation}
By Theorem 1.4 in Chapter 7 of \citet{Ethier2009},  $\sqrt{N}M^{N}_{t}$ converges to the Brownian motion $M_{t}$ in distribution in $\mathbb{D}(\mathbb{R}_{+},\mathbb{R}^{K+1})$. By Prohorov's theorem, $(\sqrt{N}M^{N})_{N=1}^{\infty}$ is tight. And since $M_{t}$ is a Brownian motion, its sample path is almost surely continuous.

For the second term, we have by Proposition \ref{driftbound_choice} that the quantity $ \sqrt{N} \left( \beta(Y^N_{z}) - b(Y^N_{z})  \right) $ is bounded for any value of $z\in [0,T]$.  Therefore, there exists some constant $C_{1}$ that does not depend on $N$ such that
\begin{equation}
\sup_{z\in [0,T]}\sqrt{N} \left| \beta(Y^N_{z}) - b(Y^N_{z})  \right|\leq C_{1}.
\end{equation}
Then
\begin{eqnarray}
& &\lim_{\delta\rightarrow 0}\lim_{N\rightarrow \infty}\mathbb{P}\left(\sup_{u,v\in [0,T],|u-v|\leq \delta}\int^{u}_{v} \sqrt{N} \left| \beta(Y^N_{z}) - b(Y^N_{z})  \right| dz > \epsilon \right)\nonumber \\ 
&\leq &  \lim_{\delta\rightarrow 0}\lim_{N\rightarrow \infty}\mathbb{P}\left(\delta \sup_{z\in [0,T]}\sqrt{N} \left| \beta(Y^N_{z}) - b(Y^N_{z})  \right|  > \epsilon \right)\nonumber \\
&\leq & \lim_{\delta\rightarrow 0}\mathbb{P}\left(\delta C_{1}  > \epsilon \right)\nonumber \\
&=& 0.
\end{eqnarray}
 Thus, we have proved the oscillation bound for the second term.  Finally for the third term we have that 
\begin{eqnarray}
\int^{u}_{v} \sqrt{N} \left| b(Y^N_{z}) - b(y_{z})  \right| dz & \leq& \int^{u}_{v} \sqrt{N} L\left| Y^N_{z} - y_{z} \right| dz\nonumber \\
&=& \int^{u}_{v} L \cdot \left|D^N_{z}\right| dz \nonumber\\\
&\leq & L\delta \sup_{t\in [0,T]}|D^{N}_{t}|.
\end{eqnarray}
By Lemma \ref{L2bound_choice},  
\begin{eqnarray}
& &\lim_{\delta\rightarrow 0}\lim_{N\rightarrow \infty}\mathbb{P}\left(\sup_{u,v\in [0,T],|u-v|\leq \delta}\int^{u}_{v} \sqrt{N} \left| b(Y^N_{z}) - b(y_{z})  \right| dz>\epsilon\right)\nonumber\\
&\leq &\lim_{\delta\rightarrow 0}\lim_{N\rightarrow \infty}\mathbb{P}\left(L\delta\sup_{t\in [0,T]}|D^{N}_{t}|>\epsilon\right)\nonumber\\
&\leq & \lim_{\delta\rightarrow 0}\lim_{N\rightarrow \infty}\frac{\mathbb{E}\left(\sup_{t\in [0,T]}|D^{N}_{t}|^2\right)}{(\epsilon/L \delta)^2}\nonumber \\
&\leq &\lim_{\delta\rightarrow 0}\frac{C_{0}(L\delta)^2}{\epsilon^2}\nonumber \\
&=& 0,
\end{eqnarray}
 which implies that the oscillation bound holds for the third term.  
\end{proof}

\begin{proof}[Proof of Theorem \ref{Lyapunov}]
To show that $g$ is a Lyapunov function for the dynamical system $y(t)$, we only need to show that $\frac{d}{dt}g(y(t))=b(y)\nabla g(y)\leq 0$ and $\frac{d}{dt}g(y(t))=0$ if and only if $y=\bar{y}$.

Since
\begin{eqnarray}
g(y)&=&\sum_{n=0}^{K}y_n\log \left(\frac{y_n}{\nu_{\rho(y)}(n)}\right)-\log(Z(\rho(y))+\sum_{n=0}^{K}\sum_{k=0}^{n-1}S_{k,n}(y_n)\nonumber\\
&=&\sum_{n=0}^{K}y_n\log \left(\frac{y_n Z(\rho(y))}{\prod_{i=0}^{n-1}\rho_i(y)}\right)-\log(Z(\rho(y))+\sum_{n=0}^{K}\sum_{k=0}^{n-1}S_{k,n}(y_n)\nonumber\\
&=&\sum_{n=0}^{K}y_n\log \left(\frac{y_n }{\prod_{i=0}^{n-1}\rho_i(y)}\right)+\sum_{n=0}^{K}y_n\log(Z(\rho(y))-\log(Z(\rho(y))+\sum_{n=0}^{K}\sum_{k=0}^{n-1}S_{k,n}(y_n)\nonumber\\
&=&\sum_{n=0}^{K}y_n\left(\log (y_n)-\log \left(\prod_{i=0}^{n-1}\rho_i(y)\right)\right)+\sum_{n=0}^{K}\sum_{k=0}^{n-1}S_{k,n}(y_n).
\end{eqnarray}
Then we have the gradient of $g$ is
\begin{eqnarray}\label{gradient_g}
\frac{\partial g(y)}{\partial y_n}& =& \log(y_n)+1-\sum_{n=0}^{K}y_n\left(\sum_{i=0}^{n-1}\frac{\partial \rho_i(y)}{\partial y_n}/\rho_i(y)\right)-\log \left(\prod_{i=0}^{n-1}\rho_i(y)\right)\nonumber\\
& &+\sum_{n=0}^{K}\sum_{k=0}^{n-1}y_n\phi_{k,n}'(y_n)/\phi_{k,n}(y_n)\nonumber\\
&=& \log\left(\frac{y_n}{\prod_{i=0}^{n-1}\rho_i(y)}\right)+1.
\end{eqnarray}

Now denote $b(y)=yB_y$ where $B_y$ is the $(K+1)\times (K+1)$ generator matrix that depends on $y$. Due to the reversibility of generator $B_y$ with respect to distribution $\nu_{\rho(y)}$, we have
 $$\nu_{\rho(y)}(m)B_y(m,n)=\nu_{\rho(y)}(n)B_y(n,m).$$ 
 
 Denote $q_y(m,n)=\nu_{\rho(y)}(m)B_y(m,n)$ , then $q_y(m,n)$ is non-negative and symmetric in $(m,n)$. Then we can show that the Dirichlet form of associated to $B_y$ evaluated at vectors $\left(\frac{y_n}{\nu_{\rho(y)}(n)}\right)$ and any $u\in \mathbb{R}^{K+1}$ is equal to $-yB_y u$,
\begin{eqnarray}
& &\mathcal{E}\left(\left(\frac{y_n}{\nu_{\rho(y)}(n)}\right)_n, u\right)\nonumber\\
 &=& \frac{1}{2}\sum_{m,n=0}^{K}q_y(m,n)\left(\frac{y_m}{\nu_{\rho(y)}(m)}-\frac{y_n}{\nu_{\rho(y)}(n)}\right)(u_m-u_n)\nonumber\\
&=&\frac{1}{2}\sum_{m,n=0}^{K}\nu_{\rho(y)}(m)B_y(m,n)\left(\frac{y_m u_m}{\nu_{\rho(y)}(m)}+\frac{y_n u_n}{\nu_{\rho(y)}(n)}-\frac{y_mu_n}{\nu_{\rho(y)}(m)}-\frac{y_nu_m}{\nu_{\rho(y)}(n)}\right)\nonumber\\
&=&\frac{1}{2}\sum_{m,n=0}^{K}\left(B(m,n)y_mu_m+B(m,n)y_mu_m-B(m,n)y_mu_n-B(n,m)y_nu_m\right)\nonumber\\
&=&\sum_{m=0}^{K} \left(\sum_{n=0}^{K}B(m,n)\right)y_mu_m -\sum_{m,n=0}^{K}B(m,n)y_mu_n\nonumber\\
&=&-\sum_{m,n=0}^{K}B(m,n)y_mu_n\nonumber\\
&=&-yB_yu.
\end{eqnarray}
Here we use the fact that row sum of $B_y$ is zero. Then using Equation (\ref{gradient_g}), we have
\begin{equation}
yB_y \nabla g(y)=-\frac{1}{2}\sum_{m,n=0}^{K}q_y(m,n)\left(\frac{y_m}{\nu_{\rho(y)}(m)}-\frac{y_n}{\nu_{\rho(y)}(n)}\right)\left(\log\left(\frac{y_m}{\nu_{\rho(y)}(m)}\right)-\log\left(\frac{y_n}{\nu_{\rho(y)}(n)}\right)\right).
\end{equation}
This shows that $\frac{d }{dt}g(y(t))= yB_y \nabla g(y)\leq0$ for all $y$. It is zero only if $ \frac{y_m}{\nu_{\rho(y)}(m)}=\frac{y_n}{\nu_{\rho(y)}(n)} $ for all $m,n$ pairs that $q_y(m,n)>0$. By the irreducibility of $B_y$, this is only possible if $ \frac{y_n}{\nu_{\rho(y)}(n)}$ doesn't depend on $n$. This indicates $y=\nu_{\rho(y)}$, which is the equilibrium point of $y(t)$.
\end{proof}

\bibliographystyle{plainnat}
\bibliography{Bike_Choice}

\begin{thebibliography}{32}
\providecommand{\natexlab}[1]{#1}
\providecommand{\url}[1]{\texttt{#1}}
\expandafter\ifx\csname urlstyle\endcsname\relax
  \providecommand{\doi}[1]{doi: #1}\else
  \providecommand{\doi}{doi: \begingroup \urlstyle{rm}\Url}\fi

\bibitem[Ames and Pachpatte(1997)]{ames1997inequalities}
William~F Ames and BG~Pachpatte.
\newblock \emph{Inequalities for differential and integral equations}, volume
  197.
\newblock Academic press, 1997.

\bibitem[Chemla et~al.(2013)Chemla, Meunier, Pradeau, Calvo, and
  Yahiaoui]{chemla2013self}
Daniel Chemla, Fr{\'e}d{\'e}ric Meunier, Thomas Pradeau, Roberto~Wolfler Calvo,
  and Houssame Yahiaoui.
\newblock Self-service bike sharing systems: simulation, repositioning,
  pricing.
\newblock 2013.

\bibitem[Chung et~al.(2018)Chung, Freund, and Shmoys]{10.1145/3209811.3209866}
Hangil Chung, Daniel Freund, and David~B. Shmoys.
\newblock Bike angels: An analysis of citi bike's incentive program.
\newblock In \emph{Proceedings of the 1st ACM SIGCAS Conference on Computing
  and Sustainable Societies}, COMPASS '18, New York, NY, USA, 2018. Association
  for Computing Machinery.
\newblock ISBN 9781450358163.
\newblock \doi{10.1145/3209811.3209866}.
\newblock URL \url{https://doi.org/10.1145/3209811.3209866}.

\bibitem[Contardo et~al.(2012)Contardo, Morency, and
  Rousseau]{contardo2012balancing}
Claudio Contardo, Catherine Morency, and Louis-Martin Rousseau.
\newblock \emph{Balancing a dynamic public bike-sharing system}, volume~4.
\newblock Cirrelt Montreal, 2012.

\bibitem[Darling et~al.(2008)Darling, Norris, et~al.]{darling2008differential}
RWR Darling, James~R Norris, et~al.
\newblock Differential equation approximations for markov chains.
\newblock \emph{Probability surveys}, 5\penalty0 (1):\penalty0 37--79, 2008.

\bibitem[Erdo{\u{g}}an et~al.(2015)Erdo{\u{g}}an, Battarra, and
  Calvo]{erdougan2015exact}
G{\"u}ne{\c{s}} Erdo{\u{g}}an, Maria Battarra, and Roberto~Wolfler Calvo.
\newblock An exact algorithm for the static rebalancing problem arising in
  bicycle sharing systems.
\newblock \emph{European Journal of Operational Research}, 245\penalty0
  (3):\penalty0 667--679, 2015.

\bibitem[Ethier and Kurtz(2009)]{Ethier2009}
Stewart~N Ethier and Thomas~G Kurtz.
\newblock \emph{Markov processes: characterization and convergence}, volume
  282.
\newblock John Wiley \& Sons, 2009.

\bibitem[Faghih-Imani et~al.(2015)Faghih-Imani, Hampshire, Marla, and
  Eluru]{faghih2015empirical}
Ahmadreza Faghih-Imani, Robert~Cornelius Hampshire, Lavanya Marla, and Naveen
  Eluru.
\newblock An empirical analysis of bike sharing usage and rebalancing: Evidence
  from barcelona and seville.
\newblock 2015.

\bibitem[Faghih-Imani et~al.(2017)Faghih-Imani, Hampshire, Marla, and
  Eluru]{faghih2017empirical}
Ahmadreza Faghih-Imani, Robert Hampshire, Lavanya Marla, and Naveen Eluru.
\newblock An empirical analysis of bike sharing usage and rebalancing: Evidence
  from barcelona and seville.
\newblock \emph{Transportation Research Part A: Policy and Practice},
  97:\penalty0 177--191, 2017.

\bibitem[Freund et~al.(2017)Freund, Henderson, and
  Shmoys]{freund2017minimizing}
Daniel Freund, Shane~G Henderson, and David~B Shmoys.
\newblock Minimizing multimodular functions and allocating capacity in
  bike-sharing systems.
\newblock In \emph{International Conference on Integer Programming and
  Combinatorial Optimization}, pages 186--198. Springer, 2017.

\bibitem[Freund et~al.(2020)Freund, Norouzi-Fard, Paul, Wang, Henderson, and
  Shmoys]{freund2020data}
Daniel Freund, Ashkan Norouzi-Fard, Alice Paul, Carter Wang, Shane~G Henderson,
  and David~B Shmoys.
\newblock Data-driven rebalancing methods for bike-share systems.
\newblock In \emph{Analytics for the Sharing Economy: Mathematics, Engineering
  and Business Perspectives}, pages 255--278. Springer, 2020.

\bibitem[Fricker and Gast(2016)]{fricker2016incentives}
Christine Fricker and Nicolas Gast.
\newblock Incentives and redistribution in homogeneous bike-sharing systems
  with stations of finite capacity.
\newblock \emph{Euro journal on transportation and logistics}, 5\penalty0
  (3):\penalty0 261--291, 2016.

\bibitem[Ghosh et~al.(2017)Ghosh, Varakantham, Adulyasak, and
  Jaillet]{ghosh2017dynamic}
Supriyo Ghosh, Pradeep Varakantham, Yossiri Adulyasak, and Patrick Jaillet.
\newblock Dynamic repositioning to reduce lost demand in bike sharing systems.
\newblock \emph{Journal of Artificial Intelligence Research}, 58:\penalty0
  387--430, 2017.

\bibitem[Hampshire and Marla(2012)]{hampshire2012analysis}
Robert~C Hampshire and Lavanya Marla.
\newblock An analysis of bike sharing usage: Explaining trip generation and
  attraction from observed demand.
\newblock In \emph{91st Annual meeting of the transportation research board,
  Washington, DC}, pages 12--2099, 2012.

\bibitem[Jian et~al.(2016)Jian, Freund, Wiberg, and
  Henderson]{jian2016simulation}
Nanjing Jian, Daniel Freund, Holly~M Wiberg, and Shane~G Henderson.
\newblock Simulation optimization for a large-scale bike-sharing system.
\newblock In \emph{Winter Simulation Conference (WSC), 2016}, pages 602--613.
  IEEE, 2016.

\bibitem[Kabra et~al.(2016)Kabra, Belavina, and Girotra]{kabra2016bike}
Ashish Kabra, Elena Belavina, and Karan Girotra.
\newblock Bike-share systems: Accessibility and availability.
\newblock 2016.

\bibitem[Ko and Pender(2017)]{ko2016strong}
Young~Myoung Ko and Jamol Pender.
\newblock Strong approximations for time varying infinite-server queues with
  non-renewal arrival and service processes.
\newblock \emph{Stochastic Models To Appear}, 2017.

\bibitem[Nair and Miller-Hooks(2014)]{nair2014equilibrium}
Rahul Nair and Elise Miller-Hooks.
\newblock Equilibrium network design of shared-vehicle systems.
\newblock \emph{European Journal of Operational Research}, 235\penalty0
  (1):\penalty0 47--61, 2014.

\bibitem[Nirenberg et~al.(2018)Nirenberg, Daw, and Pender]{nirenberg2018impact}
Samantha Nirenberg, Andrew Daw, and Jamol Pender.
\newblock The impact of queue length rounding and delayed app information on
  disney world queues.
\newblock In \emph{2018 Winter Simulation Conference (WSC)}, pages 3849--3860.
  IEEE, 2018.

\bibitem[Novitzky et~al.(2019{\natexlab{a}})Novitzky, Pender, Rand, and
  Wesson]{novitzky2019limiting}
Sophia Novitzky, Jamol Pender, Richard Rand, and Elizabeth Wesson.
\newblock Limiting the oscillations in queues with delayed information through
  a novel type of delay announcement.
\newblock \emph{arXiv preprint arXiv:1902.07617}, 2019{\natexlab{a}}.

\bibitem[Novitzky et~al.(2019{\natexlab{b}})Novitzky, Pender, Rand, and
  Wesson]{novitzky2019nonlinear}
Sophia Novitzky, Jamol Pender, Richard~H Rand, and Elizabeth Wesson.
\newblock Nonlinear dynamics in queueing theory: Determining the size of
  oscillations in queues with delay.
\newblock \emph{SIAM Journal on Applied Dynamical Systems}, 18\penalty0
  (1):\penalty0 279--311, 2019{\natexlab{b}}.

\bibitem[O'Mahony and Shmoys(2015)]{o2015data}
Eoin O'Mahony and David~B Shmoys.
\newblock Data analysis and optimization for (citi) bike sharing.
\newblock In \emph{AAAI}, pages 687--694, 2015.

\bibitem[O'Mahony(2015)]{o2015smarter}
Eoin~Daniel O'Mahony.
\newblock \emph{Smarter tools for (Citi) bike sharing}.
\newblock PhD thesis, Cornell University, 2015.

\bibitem[Pender et~al.(2017)Pender, Rand, and Wesson]{pender2017queues}
Jamol Pender, Richard~H Rand, and Elizabeth Wesson.
\newblock Queues with choice via delay differential equations.
\newblock \emph{International Journal of Bifurcation and Chaos}, 27\penalty0
  (04):\penalty0 1730016, 2017.

\bibitem[Pender et~al.(2018)Pender, Rand, and Wesson]{pender2018analysis}
Jamol Pender, Richard~H Rand, and Elizabeth Wesson.
\newblock An analysis of queues with delayed information and time-varying
  arrival rates.
\newblock \emph{Nonlinear Dynamics}, 91\penalty0 (4):\penalty0 2411--2427,
  2018.

\bibitem[Pfrommer et~al.(2014)Pfrommer, Warrington, Schildbach, and
  Morari]{pfrommer2014dynamic}
Julius Pfrommer, Joseph Warrington, Georg Schildbach, and Manfred Morari.
\newblock Dynamic vehicle redistribution and online price incentives in shared
  mobility systems.
\newblock \emph{IEEE Transactions on Intelligent Transportation Systems},
  15\penalty0 (4):\penalty0 1567--1578, 2014.

\bibitem[Raviv et~al.(2013)Raviv, Tzur, and Forma]{raviv2013static}
Tal Raviv, Michal Tzur, and Iris~A Forma.
\newblock Static repositioning in a bike-sharing system: models and solution
  approaches.
\newblock \emph{EURO Journal on Transportation and Logistics}, 2\penalty0
  (3):\penalty0 187--229, 2013.

\bibitem[Schuijbroek et~al.(2017)Schuijbroek, Hampshire, and
  Van~Hoeve]{schuijbroek2017inventory}
Jasper Schuijbroek, Robert~C Hampshire, and W-J Van~Hoeve.
\newblock Inventory rebalancing and vehicle routing in bike sharing systems.
\newblock \emph{European Journal of Operational Research}, 257\penalty0
  (3):\penalty0 992--1004, 2017.

\bibitem[Singla et~al.(2015)Singla, Santoni, Bart{\'o}k, Mukerji, Meenen, and
  Krause]{singla2015incentivizing}
Adish Singla, Marco Santoni, G{\'a}bor Bart{\'o}k, Pratik Mukerji, Moritz
  Meenen, and Andreas Krause.
\newblock Incentivizing users for balancing bike sharing systems.
\newblock In \emph{AAAI}, pages 723--729, 2015.

\bibitem[Tao and Pender(2017)]{tao2017stochastic}
Shuang Tao and Jamol Pender.
\newblock A stochastic analysis of bike sharing systems.
\newblock \emph{arXiv preprint arXiv:1708.08052}, 2017.

\bibitem[Waserhole and Jost(2016)]{waserhole2016pricing}
Ariel Waserhole and Vincent Jost.
\newblock Pricing in vehicle sharing systems: Optimization in queuing networks
  with product forms.
\newblock \emph{EURO Journal on Transportation and Logistics}, 5\penalty0
  (3):\penalty0 293--320, 2016.

\bibitem[Waserhole et~al.(2013)Waserhole, Jost, and
  Brauner]{waserhole2013pricing}
Ariel Waserhole, Vincent Jost, and Nadia Brauner.
\newblock Pricing techniques for self regulation in vehicle sharing systems.
\newblock \emph{Electronic Notes in Discrete Mathematics}, 41:\penalty0
  149--156, 2013.

\end{thebibliography}
\end{document}